\PassOptionsToPackage{square,comma,numbers,sort&compress}{natbib}  
\documentclass[preprint,12pt]{elsarticle}

\usepackage{natbib}
\setcitestyle{sort&compress}

\usepackage{lineno,hyperref}
\modulolinenumbers[5]

\usepackage{amsmath}
\usepackage{amssymb}
\usepackage{amsthm}

\allowdisplaybreaks

\usepackage[a4paper]{geometry}
\usepackage{graphicx}
\usepackage{float}
\usepackage{caption}
\usepackage{subcaption}

\usepackage{microtype}
\usepackage{array}

\newcommand{\pH}[1]{\frac{\partial H(x,y,z,\theta)}{\partial #1}}
\newcommand{\pE}[1]{\frac{\partial E(x,y,z,\theta)}{\partial #1}}
\newcommand{\pHn}[2]{\frac{\partial^{#1} H}{\partial {#2}^{#1}}}
\newcommand{\pEn}[2]{\frac{\partial^{#1} E}{\partial {#2}^{#1}}}
\newcommand{\pJn}[1]{\frac{\partial^{#1} J}{\partial \theta^{#1}}}

\newcommand{\curl}[1]{(\nabla\times)^{#1}}
\newcommand{\powfact}[2]{\frac{{#1}^{#2}}{(#2)!}}
\newcommand{\pt}[3]{\frac{ {\partial}^{#3} {#1}}{\partial {#2}^{#3}}}

\newcommand{\ppf}[2]{{{\partial} {#1}}/{{\partial} {#2}}}

\newtheorem{thm}{Theorem}
\newtheorem{lemma}[thm]{Lemma}
\newtheorem{corollary}[thm]{Corollary}

\begin{document}

\begin{frontmatter}

\title{Analytical Solutions of 3D Maxwell’s Equations via Infinite-Order Curl-Operator Expansions}

\author[Independent Researcher]{David Wei Ge\texorpdfstring{\corref{cor1}}{}}
\ead{gexiaobao.usa@gmail.com}
\cortext[cor1]{Corresponding author}
\address[Independent Researcher]{Retired, formerly with Microsoft, Redmond, USA}

\begin{abstract}
Solving Maxwell's equations to obtain explicit analytical representations of electromagnetic fields in open-space settings with general initial conditions and source terms remains a fundamental challenge. We address this problem through an operator-based construction, developing an infinite-order curl-operator expansion that yields analytical solutions as function-to-function mappings from initial data and source terms to electromagnetic fields. The framework is applied to representative cases, including Gaussian initial conditions, Gaussian sources, harmonic sources, and Ricker wavelet sources. In these applications, a family of deformed sine and cosine functions emerges naturally, enabling explicit analytical representations of the solutions. The analysis further shows that these deformed trigonometric functions are intrinsic to the general solution formulas, with their structure determined by the initial conditions and source terms. The results are compared with numerical simulations based on the FDTD method and further applied to a systematic study of mesh refinement effects. This study shows that, for the problems considered, the error measures approach limiting values as the mesh is refined, indicating that further step size reduction alone may produce little additional improvement once the mesh is sufficiently fine. The results demonstrate that the analytical solutions provide theoretical insight into electromagnetic field behavior while serving as practical benchmark solutions for computational electromagnetics.
\end{abstract}

\begin{keyword}
Maxwell's equations \sep Analytical solutions \sep Partial differential equations (PDEs) \sep Finite-difference time-domain (FDTD) \sep Wave propagation 
\end{keyword}

\end{frontmatter}

\section{Introduction}
Electromagnetic phenomena in three dimensions are governed by Maxwell’s equations. Although computational techniques, such as the finite-difference time-domain (FDTD) method, are the dominant tools for simulating 3D problems \cite{fdtd:Yee, fdtd:meep, fdtd:Schneider, maxn:Neumann, maxn:Zhou, maxn:Fedeli, fdtd1d:Mastryukov, fdtd:SchneiderDispersion, maxn:Kohlmann}, they exhibit intrinsic errors arising from truncation of higher-order derivatives and discretization, including numerical dispersion, grid anisotropy, and spurious nonphysical artifacts such as apparent superluminal signal propagation \cite{fdtd:SchneiderDispersion, fdtd:Manry}. Despite substantial progress in both numerical and analytical approaches, constructing analytical time-domain solutions in algebraic expressions for general 3D initial and source configurations remains challenging; most existing methods rely on potential formulations with gauge freedom or on integral representations that are often analytically intractable \cite{maxs:Oleg, Lipan:24, LeBoudec2024, YANG2023126678}. In this work, we address this challenge through a systematic operator-based construction, developing an infinite-order curl-operator expansion that eliminates truncation error and yields compact analytical solutions, thereby providing a general solution mapping framework for time-domain Maxwell problems.

Central to this approach is the distinction between a mapping—the correspondence between input and output functions—and a transformation, the explicit formula that realizes this correspondence. Classical Taylor series define self-mappings on analytic functions. The theorems presented here extend this concept by establishing mappings from analytic inputs to distinct outputs, enabling exact function-to-function transformations that define a general solution mapping for Maxwell’s equations in the time domain. The one-dimensional formulation of this approach is reported in \cite{GE2026100688}; the present work extends these results to three dimensions.

From this perspective, the resulting formulations offer several advantages over conventional numerical schemes: (1) each point in space–time can be evaluated independently, without requiring a computational domain or artificial boundaries; (2) the underlying physical properties of the equations are preserved, avoiding numerical artifacts such as dispersion and grid anisotropy; and (3) the analytical structure provides insights into field behavior that are often difficult to extract from numerical data alone. These analytical solutions are intended to complement, rather than replace, numerical methods such as the FDTD method and finite element time-domain method, serving as rigorous reference fields for verification and analysis.

The remainder of this paper is organized as follows. Section \ref{sect.problem} formulates the problem and defines the solution mappings. Section \ref{sect.solution} presents the analytical solution formulas based on the Time--Space theorem (\ref{thm.ts}), and Section \ref{sect.deformedFunction} introduces the resulting deformed trigonometric functions. Section \ref{sect.relation.green} discusses the relationship with existing analytical methods. Representative case studies are presented in Section \ref{sect.caseStudies}, with the required formulas proved in \ref{appendix.cases}. Section \ref{sect.numeric} compares the analytical reference fields with FDTD results.

\section{The Problem}
\label{sect.problem}
The following notation is used throughout the paper. Capital letters denote three-dimensional vector functions, and lowercase letters denote scalars. A lowercase variable with an over-arrow denotes a three-dimensional vector variable. The symbol 0 represents either a scalar or a vector, depending on the context.

\subsection{Definition of the problem} Consider the following three-dimensional Maxwell’s equations in open space.
\begin{align}
	\label{3d.1}
	\pH{\theta} &= -\frac{1}{\eta} \nabla \times E(x,y,z,\theta)
\\
	\label{3d.2}
	\pE{\theta} &= \eta \nabla \times H(x,y,z,\theta) - \eta J(x,y,z,\theta)
\\
	\label{3d.3}
	H(x,y,z,0) &= F_h(x,y,z)
\\
	\label{3d.4}
	E(x,y,z,0) &= F_e(x,y,z)
\end{align}

where $\eta = \sqrt{{\mu}/{\epsilon}}$, $\theta = ct$, $c = {1}/{\sqrt{\epsilon \mu}}$, and
\begin{equation*}
	\begin{gathered}
		x,y,z,t \in \mathbb{R}; \epsilon, \mu \in \mathbb{R}_{>0};
		F_h, F_e \in C^{\infty}(\mathbb{R}^3,\mathbb{R}^3); J, H, E \in C^{\infty}(\mathbb{R}^4, \mathbb{R}^3) ,
	\end{gathered}
\end{equation*}
where $\epsilon$ and $\mu$ are constants, $x,y,z$ are space coordinates, $t$ is time, function $J (x,y,z,\theta)$ is the source term, functions $F_h(x,y,z)$ and $F_e(x,y,z)$ are the initial values. The equations are with dimensionless units \cite{maxn:Kohlmann, fdtd:meep}. For convenience, $\theta$ is referred to as time.
The initial values satisfy the following conditions.
\begin{equation}
	\label{3d.5}
	\nabla \cdot F_h(x,y,z) = 0
\end{equation}
\begin{equation}
	\label{3d.6}
	\nabla \cdot F_e(x,y,z) = 0
\end{equation}

\subsection{Definition of the solution}
A function pair \{ $ H(x,y,z, \theta), E(x,y,z, \theta) $ \} is a solution to the problem if and only if it satisfies (\ref{3d.1}), (\ref{3d.2}), (\ref{3d.3}), and (\ref{3d.4}). 

\subsection{Solution as a function mapping}
Taking functions $J (x,y,z,\theta)$, $F_h (x,y,z)$ and $F_e (x,y,z)$ as the input, function pair \{ $ H(x,y,z, \theta), E(x,y,z, \theta) $ \} as the output, the definitions of the problem and its solution define a function-to-function mapping:
\begin{equation}
	\label{3d.fmap}
	\{ J(x,y,z,\theta), F_h(x,y,z), F_e(x,y,z) \} \to \{ H(x,y,z,\theta), E(x,y,z,\theta) \}
\end{equation}

For practical analysis, we divide the general problem into two fundamental sub-cases.

\textbf{Initial-value problem}:
\begin{equation}
	\begin{gathered}
		\label{3d.fmapIni}
		\{J(x,y,z,\theta) \equiv 0, F_h(x,y,z), F_e(x,y,z) \} \to \{ H(x,y,z,\theta), E(x,y,z,\theta) \}
	\end{gathered}
\end{equation}

\textbf{Source-driven problem}:
\begin{equation}
	\begin{gathered}
		\label{3d.fmapSrc}
		\{ J(x,y,z,\theta), F_h(x,y,z) \equiv 0, F_e(x,y,z) \equiv 0\} \to \{ H(x,y,z,\theta), E(x,y,z,\theta) \}
	\end{gathered}
\end{equation}

\section{The solutions}
\label{sect.solution}
\textbf{Definition of higher orders of curls}:

For a vector field \(F:\mathbb{R}^3\rightarrow\mathbb{R}^3\), the repeated curl operator is defined recursively by
\begin{equation*}
	(\nabla\times)^0F = F,	
\end{equation*}
\begin{equation*}
	(\nabla\times)^nF
	=
	\nabla\times\left((\nabla\times)^{\,n-1}F\right),
	\qquad n\ge1.
\end{equation*}

\subsection{\textbf{Uniqueness of solutions}}

\begin{thm} \textbf{(Uniqueness of smooth finite-energy fields)}.
	Suppose two sufficiently smooth solutions satisfy Maxwell’s equations with the same initial conditions and source terms. Assume further that their difference fields have finite electromagnetic energy and that the Poynting flux through a sphere of radius R tends to zero as $R \to \infty$. Then the two solutions coincide.
\end{thm}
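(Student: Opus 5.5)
The plan is the standard energy method applied to the difference of the two solutions. Let $\{H_1,E_1\}$ and $\{H_2,E_2\}$ be two smooth solutions of (\ref{3d.1})--(\ref{3d.4}) with the same $J$, $F_h$, $F_e$. Set $\tilde H = H_1-H_2$ and $\tilde E = E_1-E_2$. Because the equations are linear, $\{\tilde H,\tilde E\}$ satisfies the homogeneous system
\begin{equation*}
\frac{\partial \tilde H}{\partial \theta} = -\frac{1}{\eta}\nabla\times\tilde E,\qquad
\frac{\partial \tilde E}{\partial \theta} = \eta\nabla\times\tilde H,
\end{equation*}
with zero initial data: $\tilde H(\cdot,0)=0$ and $\tilde E(\cdot,0)=0$. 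The source cancels in the subtraction.

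Next, introduce a weighted energy adapted to the scaling $\theta=ct$. On the ball $B_R$ of radius $R$ define
\begin{equation*}
W_R(\theta)=\frac12\int_{B_R}\Bigl(\eta\,|\tilde H|^2+\frac{1}{\eta}|\tilde E|^2\Bigr)\,dV ,
\end{equation*}
and let $W(\theta)=\lim_{R\to\infty}W_R(\theta)$. This limit is finite by the finite-energy hypothesis, and $W$ is proportional to the physical electromagnetic energy of the difference field. Differentiate under the integral, which smoothness allows on bounded $B_R$:
\begin{equation*}
\frac{dW_R}{d\theta}=\int_{B_R}\bigl(-\tilde H\cdot\nabla\times\tilde E+\tilde E\cdot\nabla\times\tilde H\bigr)\,dV
=-\int_{B_R}\nabla\cdot(\tilde E\times\tilde H)\,dV .
\end{equation*}
The second equality uses the identity $\nabla\cdot(A\times B)=B\cdot\nabla\times A-A\cdot\nabla\times B$. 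The divergence theorem then gives
\begin{equation*}
\frac{dW_R}{d\theta}=-\oint_{\partial B_R}(\tilde E\times\tilde H)\cdot n\,dS ,
\end{equation*}
which is minus the Poynting flux of the difference field through the sphere. Note that the weights $\eta$ and $1/\eta$ are chosen exactly so that the cross terms combine into a pure divergence.

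Integrate in time from $0$ to $\theta$. Since $W_R(0)=0$,
\begin{equation*}
W_R(\theta)=-\int_0^\theta\oint_{\partial B_R}(\tilde E\times\tilde H)\cdot n\,dS\,ds .
\end{equation*}
Let $R\to\infty$. The left side converges to $W(\theta)$ by monotone convergence. The right side tends to $0$ by the flux hypothesis.

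The main obstacle is justifying that the flux hypothesis survives the time integration. The hypothesis gives pointwise decay of the flux in $\theta$, but we need the integral over $[0,\theta]$ to vanish. I will read the hypothesis as decay locally uniformly in $\theta$, or at least dominated on compact time intervals. Then dominated convergence on $[0,\theta]$ gives the required limit, and this interpretation is what I would state explicitly in the proof.

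With that in place, $W(\theta)=0$ for every $\theta$, and the argument for negative $\theta$ is symmetric. The integrand of $W$ is nonnegative and continuous, so $\tilde H\equiv0$ and $\tilde E\equiv0$, and the two solutions coincide.
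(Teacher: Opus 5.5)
Your proof is correct and follows the paper's own argument: the Poynting/energy identity on balls, the divergence theorem, vanishing flux and zero initial energy. Your weighted energy $\frac12\int(\eta|\tilde H|^2+\eta^{-1}|\tilde E|^2)\,dV$ is just $c$ times the paper's $U$, which makes the identity exact in $\theta$, whereas the paper's $\partial u/\partial\theta=-\nabla\cdot(E\times H)$ holds only up to a harmless factor $1/c$. The one real refinement is that you integrate in time before letting $R\to\infty$ and state that the flux decay must be locally uniform (or dominated) in $\theta$; this makes explicit the limit interchange the paper performs tacitly when it passes from $dU_V/d\theta$ to $dU/d\theta=0$.
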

\begin{proof}
Let $(H_1,E_1)$ and $(H_2,E_2)$ be two solutions with the same initial conditions and source terms. Define the difference fields
\begin{equation*}
	E=E_1-E_2,\qquad H=H_1-H_2.	
\end{equation*}
Then $E$ and $H$ satisfy the homogeneous Maxwell equations with zero initial data.

Define the electromagnetic energy density
\begin{equation*}
	u(x,y,z,\theta)
	=
	\frac12\left(\epsilon |E(x,y,z,\theta)|^2
	+\mu |H(x,y,z,\theta)|^2\right).
\end{equation*}
By Poynting's theorem for the homogeneous system,
\begin{equation*}
\frac{\partial u}{\partial \theta}=-\nabla\cdot(E\times H).	
\end{equation*}
For a bounded volume $V$, define
\begin{equation*}
U_V(\theta)=\int_V u(x,y,z,\theta)\,dV.	
\end{equation*}
Then
\begin{align*}
	\frac{dU_V}{d\theta}
	&=
	-\int_V \nabla\cdot(E\times H)\,dV \\
	&=
	-\int_{\partial V}(E\times H)\cdot\mathbf{n}\,dS.
\end{align*}
Let $V$ be a ball of radius $R$. Under the assumed decay of the fields at spatial infinity, the boundary flux tends to zero as $R\to\infty$. Therefore, for the total energy in open space,
\begin{equation*}
U(\theta)=\int_{\mathbb R^3}u(x,y,z,\theta)\,dV,	
\end{equation*}
we have
\begin{equation*}
\frac{dU}{d\theta}=0.	
\end{equation*}
Since the two solutions have the same initial data,
\begin{equation*}
E(x,y,z,0)=0,\qquad H(x,y,z,0)=0,	
\end{equation*}
and hence
\begin{equation*}
U(0)=0.	
\end{equation*}
Therefore,
\begin{equation*}
U(\theta)=0	
\end{equation*}
for all $\theta $. Since $u(x,y,z,\theta)\ge 0$, the integral of $u$ over $\mathbb R^3$ can be zero only if $u(x,y,z,\theta )=0$ almost everywhere. By continuity of the fields, $u(x,y,z,\theta )=0$ everywhere. Hence
\begin{equation*}
E(x,y,z,\theta )=0,\qquad H(x,y,z,\theta )=0.	
\end{equation*}
Thus $E_1=E_2$ and $H_1=H_2$, proving uniqueness.
\end{proof}

\begin{thm} \textbf{(Uniqueness in the real-analytic class)}.
	Suppose two solutions of Maxwell’s equations have the same initial conditions and source terms and are analytic in $\theta$ on a connected time interval containing $\theta=0$. If the hypotheses required by the Time–Space theorem hold, then the two solutions coincide on that interval.
\end{thm}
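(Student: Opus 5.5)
The plan is to reduce to the homogeneous problem and show that the difference fields have all time derivatives vanishing at $\theta=0$; analyticity then forces them to vanish on the whole interval.

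\textbf{Step 1: reduce to zero data.} Let $(H_1,E_1)$ and $(H_2,E_2)$ be the two solutions, and set $H=H_1-H_2$, $E=E_1-E_2$. By linearity, $(H,E)$ satisfies (\ref{3d.1})--(\ref{3d.2}) with $J\equiv 0$ and with zero initial data $H(\cdot,0)=E(\cdot,0)=0$. Both fields are analytic in $\theta$ on the connected interval $I\ni 0$.

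\textbf{Step 2: all time derivatives vanish at $\theta=0$.} Differentiating the homogeneous system repeatedly in $\theta$ gives
\begin{equation*}
\pHn{n}{\theta}=\Big(-\tfrac{1}{\eta}\Big)^{a}\eta^{b}\,(\pm 1)\,\curl{n}\,G,
\end{equation*}
where $G\in\{H,E\}$ is chosen according to the parity of $n$, and similarly for $E$. Here we interchange $\partial_\theta$ with spatial derivatives, which is legitimate for $C^\infty$ fields. Concretely,
\begin{equation*}
\partial_\theta^{2k}H=(-1)^k\curl{2k}H, \qquad \partial_\theta^{2k+1}H=(-1)^{k+1}\eta^{-1}\curl{2k+1}E.
\end{equation*}
I will prove this by induction on $n$, using (\ref{3d.1})--(\ref{3d.2}) at each step. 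Evaluating at $\theta=0$, every term is a curl-power of $H(\cdot,0)=0$ or $E(\cdot,0)=0$, so all Taylor coefficients in $\theta$ vanish at every fixed $(x,y,z)$. This is precisely the coefficient structure underlying the Time--Space theorem. Invoking that theorem's hypotheses (convergence of the curl-operator series to the solution) gives the same conclusion directly: the series built from zero initial data and zero source is identically zero.

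\textbf{Step 3: identity theorem.} For fixed $(x,y,z)$, the map $\theta\mapsto (H,E)(x,y,z,\theta)$ is real-analytic on $I$ with all derivatives zero at $\theta=0$. Hence it vanishes on a neighbourhood of $0$. The set of points of $I$ at which all derivatives vanish is closed by continuity of derivatives and open by analyticity (local power series). Since $I$ is connected, that set is all of $I$, and so $H_1=H_2$ and $E_1=E_2$ there.

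The main obstacle is Step 2's justification. Analyticity in $\theta$ alone only gives a local Taylor expansion in $\theta$. We must ensure that the $\theta$-derivatives at $0$ are really determined by the spatial curls of the initial data, which requires commuting $\partial_\theta^n$ with $\nabla\times$ and evaluating at $\theta=0$. For smooth solutions this is routine, since the derivatives are computed pointwise from the PDE. I would cite the hypotheses of the Time--Space theorem only to guarantee the needed smoothness and that interchange. No decay or energy assumption is needed, in contrast to the preceding uniqueness theorem.
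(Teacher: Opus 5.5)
Your proposal is correct and follows the paper's proof. You pass to the difference fields, use the homogeneous Time--Space identities to show that every $\theta$-derivative vanishes at $\theta=0$, and conclude by analyticity on the connected interval; your explicit open--closed argument is a more careful version of the paper's appeal to analytic continuation. One small correction: the Time--Space theorem assumes only $C^\infty$ smoothness and Maxwell's equations, not convergence of the curl-operator series, so drop that parenthetical (nothing in your argument depends on it).
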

\begin{proof}
	Let $E=E_1-E_2$ and $H=H_1-H_2$. Since the two solutions have the same source terms and initial conditions, the difference fields satisfy the homogeneous Maxwell equations and
	\begin{equation*}
	E(x,y,z,0)=0,\qquad H(x,y,z,0)=0.	
	\end{equation*}
	The above leads to
	\begin{equation}
		\label{unique.curl.h}
		\curl{n} H(x,y,z,0) = 0, \forall n \ge 0 \,,
	\end{equation}
	\begin{equation}
		\label{unique.curl.e}
		\curl{n} E(x,y,z,0) = 0, \forall n \ge 0 \,.
	\end{equation}
	
	By the Time--Space theorem, all temporal derivatives of the difference fields at $\theta=0$ vanish:
	\begin{equation*}
	\frac{\partial^n E(x,y,z,0)}{\partial\theta^n}=0,\qquad
\frac{\partial^n H(x,y,z,0)}{\partial\theta^n}=0,
\qquad n=0,1,2,\ldots.		
	\end{equation*}
	Because $E$ and $H$ are analytic in $\theta$, their Taylor expansions about $\theta=0$ vanish identically within their intervals of convergence. Hence
	\begin{equation*}
	E(x,y,z,\theta)=0,\qquad H(x,y,z,\theta)=0.	
	\end{equation*}
	By analytic continuation, this equality holds throughout the connected time interval under consideration. Therefore,
	\begin{equation*}
	E_1=E_2,\qquad H_1=H_2,	
	\end{equation*}
	which proves uniqueness.	
\end{proof}

\subsection{\textbf{Initial-value problem}}
For convenience, define
\begin{align}
	\label{ini.terms.1}
	F_1^{(n)}(x,y,z)
	&=
	(\nabla\times)^n F_h(x,y,z),
	\\
	\label{ini.terms.2}
	F_2^{(n)}(x,y,z)
	&=
	(\nabla\times)^n F_e(x,y,z).
\end{align}

\begin{thm} \label{thm.ini} \textbf{(Initial value solution)}. 
	Suppose \(F_h,F_e\in C^\infty(\mathbb{R}^3,\mathbb{R}^3)\), and suppose there exist constants \(C>0\), \(A>0\), and \(0\leq \beta<1\) such that
	\[
	\left|F_j^{(n)}(x,y,z)\right|
	\leq C A^n (n!)^\beta,
	\qquad j=1,2
	\]
	for all sufficiently large \(n\), uniformly in \((x,y,z)\). Then the series defined by \eqref{3d.solIniH} and \eqref{3d.solIniE} converge absolutely and uniformly on compact \(\theta\)-intervals. The resulting fields satisfy the initial-value problem described by \eqref{3d.fmapIni}.
	\begin{equation}
		\label{3d.solIniH}
	\begin{split}
		H(x,y,z,\theta) &= F_h(x,y,z) \\
		&+ \sum_{n=0}^{\infty}(-1)^{n+1} \powfact{\theta}{{2(n+1)}} F_1^{(2(n+1))}(x,y,z) \\
		&+ \frac{1}{\eta}\sum_{n=0}^{\infty}(-1)^{n+1} \powfact{\theta}{2n+1} F_2^{(2n+1)}(x,y,z),
	\end{split}
	\end{equation}
	\begin{equation}
	\label{3d.solIniE}
	\begin{split}
		E(x,y,z,\theta) &= F_e(x,y,z) \\
		 &+ \sum_{n=0}^{\infty}(-1)^{n+1} \powfact{\theta}{2(n+1)} F_2^{(2(n+1))}(x,y,z) \\
		  &+ \eta \sum_{n=0}^{\infty} (-1)^n \powfact{\theta}{2n+1} F_1^{(2n+1)}(x,y,z).
	\end{split}
	\end{equation}
\end{thm}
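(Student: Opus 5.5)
The plan has two parts: first show that the series converge, then verify the equations and initial conditions by differentiating term by term. For convergence, use the assumed bound $|F_j^{(n)}|\le C A^n (n!)^\beta$ with $0\le\beta<1$. The general term of each series is then dominated by
\[
C\,\frac{(A|\theta|)^m}{(m!)^{1-\beta}},
\]
with $m=2(n+1)$ or $m=2n+1$. Since $1-\beta>0$, this majorant series converges for every $\theta$ by the ratio test: the ratio of consecutive terms is $A|\theta|/(m+1)^{1-\beta}\to 0$. The bound is uniform in $(x,y,z)$ and in $|\theta|\le T$, so the Weierstrass M-test gives absolute and uniform convergence on compact $\theta$-intervals. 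The finitely many $n$ for which the bound may fail do not affect convergence.

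The next step is to justify termwise differentiation in $\theta$. Differentiating term by term shifts the index, so $m$ becomes $m-1$. The differentiated series has the same majorant type, $C A^m (m!)^\beta\,|\theta|^{m-1}/(m-1)!$, which is again summable uniformly on compacts. Hence $\partial_\theta$ commutes with the sums.

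The second justification, which I expect to be the main obstacle, is exchanging $\nabla\times$ with the infinite sums. Applying the curl to the term containing $F_j^{(m)}$ produces $F_j^{(m+1)}$, whose bound is $CA^{m+1}((m+1)!)^\beta$. After dividing by $m!$, this is still summable, since $(m+1)^\beta A\,|\theta|^m/(m!)^{1-\beta}$ decays superexponentially. The curl-differentiated series therefore converges uniformly in $(x,y,z)$. By the standard theorem on uniformly convergent series whose derivatives also converge uniformly, applied component-wise to the first spatial derivatives, the curl of $H$ (and of $E$) equals the termwise curl, and the fields are $C^1$.

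With both interchanges in place, the equations follow from a direct index-matching computation. Compute $\partial_\theta H$ termwise:
\[
\partial_\theta H=\sum_{n\ge0}(-1)^{n+1}\frac{\theta^{2n+1}}{(2n+1)!}F_1^{(2n+2)}+\frac1\eta\sum_{n\ge0}(-1)^{n+1}\frac{\theta^{2n}}{(2n)!}F_2^{(2n+1)}.
\]
On the other side, compute $-\frac1\eta\nabla\times E$ termwise. The $F_e$ term gives $-\frac1\eta F_2^{(1)}$. The $F_2^{(2n+2)}$ series becomes $\frac1\eta\sum_{n\ge0}(-1)^{n}\frac{\theta^{2n+2}}{(2n+2)!}F_2^{(2n+3)}$. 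The $F_1^{(2n+1)}$ series becomes $-\sum_{n\ge0}(-1)^n\frac{\theta^{2n+1}}{(2n+1)!}F_1^{(2n+2)}$. Reindexing the middle sum by $n\mapsto n-1$ and merging it with $-\frac1\eta F_2^{(1)}$ as the $n=0$ term reproduces $\partial_\theta H$ exactly, which establishes \eqref{3d.1}. The symmetric computation with $J\equiv0$ gives $\partial_\theta E=\eta\nabla\times H$, which is \eqref{3d.2}.

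Finally, setting $\theta=0$ kills every series term, so $H(\cdot,0)=F_h$ and $E(\cdot,0)=F_e$, giving \eqref{3d.3} and \eqref{3d.4}. The divergence conditions \eqref{3d.5} and \eqref{3d.6} are not needed for \eqref{3d.1}–\eqref{3d.4}. They only ensure that the fields stay divergence-free: every term with $m\ge1$ is a curl, and the $m=0$ terms are divergence-free by assumption.
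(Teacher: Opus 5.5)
Your proposal follows the paper's proof essentially step for step. It uses the same majorant $C(A|\theta|)^m/(m!)^{1-\beta}$ to justify termwise $\partial_\theta$ and $\nabla\times$. It uses the same index shift, absorbing $-\frac{1}{\eta}F_2^{(1)}$ (resp.\ $\eta F_1^{(1)}$) as the $n=0$ term. It ends with the same evaluation at $\theta=0$, and your index bookkeeping is correct.

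\textbf{Gap in the curl interchange.} The one step whose justification fails as written is the curl interchange you flagged as the main obstacle. The paper passes over it just as quickly, asserting that the estimate for the series obtained by applying one more curl makes the termwise operations valid.
\begin{itemize}
\item The termwise-differentiation theorem, applied componentwise, needs uniform convergence of each series $\sum_m c_m(\theta)\,\partial_{x_i}\bigl(F_j^{(m)}\bigr)_l$ separately. Here $c_m(\theta)$ denotes the scalar coefficient $\pm\eta^{\pm1}\theta^m/m!$.
\item Your hypothesis only bounds the antisymmetric combinations of first partials that make up $\bigl(F_j^{(m+1)}\bigr)_l$.
\item Uniform convergence of a curl series says nothing about the individual partials. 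For example, gradients of $C^1$-convergent but non-$C^2$ potentials have identically vanishing curls.
\end{itemize}
So neither the termwise curl formula nor your $C^1$ claim follows from what you wrote.

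\textbf{A repair using only the stated bound.} Fix $\theta$ and let $R$ be the sum of the terms with $m\ge1$.
\begin{itemize}
\item For every $k$, the series $\sum_{m\ge1}c_m(\theta)F_j^{(m+k)}$ converges uniformly by your estimate, using $((m+k)!)^\beta\le(m!)^\beta(m+k)^{k\beta}$.
\item Since $\nabla\times$ and $\nabla\cdot$ commute with locally uniform limits in $\mathcal{D}'$, this series equals $(\nabla\times)^kR$ distributionally.
\item $\nabla\cdot R=0$, because every term with $m\ge1$ is a curl.
\item Hence $\Delta^kR=(-1)^k(\nabla\times)^{2k}R$ is continuous for every $k$. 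Interior elliptic regularity then makes $R$ smooth in $(x,y,z)$, and its classical curl coincides with the termwise series.
\end{itemize}
Alternatively, strengthen the hypothesis so that all first-order partials of $F_j^{(n)}$ obey the same bound $CA^n(n!)^\beta$. Then your componentwise argument is valid as stated.

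Everything else in your proposal is correct: the $\theta$-interchange, the index matching for both equations, the initial values, and the remark that the divergence conditions are not needed.
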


\begin{proof}
	We first justify the term-by-term operations used below. Let $T>0$ and suppose that $|\theta|\leq T$. By the hypothesis of the
	theorem, for all sufficiently large $m$ and for $j=1,2$,
	\begin{equation*}
	\left|F_j^{(m)}(x,y,z)\right|
	\leq C A^m(m!)^\beta .	
	\end{equation*}
	Hence, a typical term in either \eqref{3d.solIniH} or \eqref{3d.solIniE} is bounded by
	\begin{equation*}
	C\frac{(AT)^m}{(m!)^{1-\beta}}.	
	\end{equation*}
	Because $0\leq\beta<1$, the corresponding numerical series 	converges. The same estimate, after an index shift, applies to
	the series obtained by differentiating with respect to $\theta$ or by applying one additional curl operator. Therefore, all the
	series used below converge absolutely and uniformly on compact 	$\theta$-intervals, and the required term-by-term operations are
	valid.
	
	Differentiating \eqref{3d.solIniH} with respect to $\theta$, we obtain
	\begin{equation}
		\label{proof.ini.H.1}
		\begin{split}
			\frac{\partial H(x,y,z,\theta)}{\partial\theta}
			&=
			\sum_{n=0}^{\infty}	(-1)^{n+1}	\powfact{\theta}{2n+1}	F_1^{(2n+2)}(x,y,z)
			\\
			&\quad+	\frac{1}{\eta}\sum_{n=0}^{\infty}(-1)^{n+1}	\powfact{\theta}{2n} F_2^{(2n+1)}(x,y,z).
		\end{split}
	\end{equation}
	
	Applying the curl operator to \eqref{3d.solIniE}, we obtain
	\begin{equation*}
		\begin{split}
			\nabla\times E(x,y,z,\theta)
			&= 	F_2^{(1)}(x,y,z)
			\\
			&\quad+	\sum_{n=0}^{\infty}	(-1)^{n+1} \powfact{\theta}{2(n+1)}	F_2^{(2n+3)}(x,y,z)
			\\
			&\quad+ \eta \sum_{n=0}^{\infty}(-1)^n	\powfact{\theta}{2n+1}	F_1^{(2n+2)}(x,y,z).
		\end{split}
	\end{equation*}
	Combining the first term with the first infinite sum by shifting
	the summation index gives
	\begin{equation}
		\label{proof.ini.E.1}
		\begin{split}
			\nabla\times E(x,y,z,\theta)
			&= 	-\sum_{n=0}^{\infty}(-1)^{n+1}	\powfact{\theta}{2n} F_2^{(2n+1)}(x,y,z)
			\\
			&\quad-	\eta \sum_{n=0}^{\infty} (-1)^{n+1} \powfact{\theta}{2n+1}	F_1^{(2n+2)}(x,y,z).
		\end{split}
	\end{equation}
	Comparison of \eqref{proof.ini.H.1} and \eqref{proof.ini.E.1} yields
	\begin{equation*}
	\frac{\partial H}{\partial\theta}
	=
	-\frac{1}{\eta}\nabla\times E,	
	\end{equation*}
	which is \eqref{3d.1}.
	
	Next, differentiating \eqref{3d.solIniE} with respect to $\theta$, we obtain
	\begin{equation}
		\label{proof.ini.E.2}
		\begin{split}
			\frac{\partial E(x,y,z,\theta)}{\partial\theta}
			&= 	\sum_{n=0}^{\infty} (-1)^{n+1} \powfact{\theta}{2n+1} F_2^{(2n+2)}(x,y,z)
			\\
			&\quad+	\eta \sum_{n=0}^{\infty} (-1)^n \powfact{\theta}{2n} F_1^{(2n+1)}(x,y,z).
		\end{split}
	\end{equation}
	
	Applying the curl operator to \eqref{3d.solIniH}, we obtain
	\begin{equation*}
		\begin{split}
			\nabla\times H(x,y,z,\theta)
			&=	F_1^{(1)}(x,y,z)
			\\
			&\quad+ \sum_{n=0}^{\infty} (-1)^{n+1} \powfact{\theta}{2(n+1)} F_1^{(2n+3)}(x,y,z)
			\\
			&\quad+ \frac{1}{\eta} \sum_{n=0}^{\infty} (-1)^{n+1} \powfact{\theta}{2n+1} F_2^{(2n+2)}(x,y,z).
		\end{split}
	\end{equation*}
	Combining the first term with the first infinite sum gives
	\begin{equation}
		\label{proof.ini.H.2}
		\begin{split}
			\nabla\times H(x,y,z,\theta)
			&= \sum_{n=0}^{\infty} (-1)^n \powfact{\theta}{2n} F_1^{(2n+1)}(x,y,z)
			\\
			&\quad+ \frac{1}{\eta} \sum_{n=0}^{\infty} (-1)^{n+1} \powfact{\theta}{2n+1} F_2^{(2n+2)}(x,y,z).
		\end{split}
	\end{equation}
	Comparison of \eqref{proof.ini.E.2} and \eqref{proof.ini.H.2} yields
	\begin{equation*}
	\frac{\partial E}{\partial\theta}
	=
	\eta\nabla\times H,	
	\end{equation*}
	which is \eqref{3d.2}.
	
	Finally, setting $\theta=0$ in \eqref{3d.solIniH}, all terms in
	the infinite sums vanish, and hence
	\begin{equation*}
	H(x,y,z,0)=F_h(x,y,z),	
	\end{equation*}
	which is \eqref{3d.3}. Similarly, setting $\theta=0$ in \eqref{3d.solIniE} gives
	\begin{equation*}
	E(x,y,z,0)=F_e(x,y,z),	
	\end{equation*}
	which is \eqref{3d.4}.
	
	Therefore, the fields $H$ and $E$ defined by \eqref{3d.solIniH} and \eqref{3d.solIniE}, respectively, satisfy the initial-value problem \eqref{3d.fmapIni}.
\end{proof}

\begin{corollary} \textbf{(Time analyticity of the initial-value solutions).}
	For every fixed $(x,y,z)\in\mathbb{R}^3$, the solution fields 	$H(x,y,z,\theta)$ and $E(x,y,z,\theta)$ given by 	Theorem~\ref{thm.ini} are real analytic functions of $\theta$.
\end{corollary}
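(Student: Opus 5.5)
For fixed $(x,y,z)$, the series \eqref{3d.solIniH} and \eqref{3d.solIniE} are already power series in $\theta$ about $\theta=0$. Their coefficients are the constant vectors $F_j^{(m)}(x,y,z)/m!$, multiplied by $\pm 1$, $\eta$ or $1/\eta$. The plan is therefore to show that each of these power series has infinite radius of convergence. A power series with infinite radius of convergence defines a real analytic (indeed entire) function on all of $\mathbb{R}$, which gives the corollary directly.

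First, I would rewrite each field as one power series $\sum_{m\ge 0} a_m(x,y,z)\,\theta^m$, working componentwise in $\mathbb{R}^3$. For $H$, the coefficients are $a_0=F_h$, then $a_{2k}=(-1)^k F_1^{(2k)}/(2k)!$ for $k\ge1$, and $a_{2k+1}=(-1)^{k+1}F_2^{(2k+1)}/(\eta\,(2k+1)!)$. The coefficients for $E$ are read off from \eqref{3d.solIniE} in the same way. Since the two sums in each formula contribute only to disjoint parities of $m$, this regrouping is exact and involves no rearrangement issue.

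Second, I would bound the coefficients using the hypothesis of Theorem~\ref{thm.ini}. For $m$ sufficiently large it gives
\[
|a_m| \le \max(1,\eta,1/\eta)\, C\,\frac{A^m}{(m!)^{1-\beta}}.
\]
Because $0\le\beta<1$, we have $(A^m/(m!)^{1-\beta})^{1/m}\to 0$, by Stirling's formula or simply because $(m!)^{1/m}\to\infty$. Hence $\limsup |a_m|^{1/m}=0$, and the Cauchy--Hadamard formula gives infinite radius of convergence. Finitely many small $m$ do not affect the radius of convergence. This is essentially the same estimate as in the proof of Theorem~\ref{thm.ini}, now used for every $T>0$.

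Third, I would conclude with the standard fact that a power series converging on all of $\mathbb{R}$ is real analytic at every point. Re-expanding about any $\theta_0$ gives a convergent Taylor series there. The only point needing care is that analyticity must hold on all of $\mathbb{R}$ and not just near $0$, and infinite radius handles this automatically. I do not expect a real obstacle: the hypothesis is uniform in $(x,y,z)$, and it is only needed pointwise here.
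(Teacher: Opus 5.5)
Your argument is correct, but it takes a different route from the paper.

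The paper's proof uses the Time--Space identities \eqref{ts.h1}--\eqref{ts.e2} with $J\equiv 0$. They rewrite each coefficient $F_j^{(m)}$ as the matching temporal derivative $\partial^m H(x,y,z,0)/\partial\theta^m$ or $\partial^m E(x,y,z,0)/\partial\theta^m$. This identifies \eqref{3d.solIniH} and \eqref{3d.solIniE} as the Taylor series of the fields about $\theta=0$. The paper then combines this with the convergence on compact intervals from Theorem~\ref{thm.ini} to conclude.

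You skip that identification entirely, and logically it is not needed. A power series that converges on all of $\mathbb{R}$ is automatically the Taylor series of its sum, by term-by-term differentiation. Your Cauchy--Hadamard estimate gives infinite radius of convergence directly from the coefficient bound at one fixed point.

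Your route is more elementary and self-contained. It does not use the fact that $H,E$ solve Maxwell's equations. Nor does it need the $C^\infty(\mathbb{R}^4)$ regularity that the Time--Space theorem presupposes and that Theorem~\ref{thm.ini} does not fully establish. It also states explicitly the standard fact, re-expansion about any $\theta_0$, that the paper's last sentence uses without comment.

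What the paper's route buys is interpretation. It shows that the curl coefficients are exactly the time derivatives of the solution at $\theta=0$. That link is what the paper relies on in its uniqueness theorem for the real-analytic class.
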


\begin{proof}
	Substituting \eqref{ini.terms.1} and \eqref{ini.terms.2} into \eqref{3d.solIniH} and \eqref{3d.solIniE}, respectively, gives
	\begin{equation*}
		\begin{split}
			H(x,y,z,\theta)
			&=F_h(x,y,z)
			\\
			&\quad+ \sum_{n=0}^{\infty} (-1)^{n+1} \powfact{\theta}{2(n+1)} \curl{2(n+1)}F_h(x,y,z)
			\\
			&\quad+ \frac{1}{\eta} \sum_{n=0}^{\infty} (-1)^{n+1} \powfact{\theta}{2n+1} \curl{2n+1}F_e(x,y,z),
		\end{split}
	\end{equation*}
	and
	\begin{equation*}
		\begin{split}
			E(x,y,z,\theta)
			&=F_e(x,y,z)
			\\
			&\quad+ \sum_{n=0}^{\infty} (-1)^{n+1} \powfact{\theta}{2(n+1)} \curl{2(n+1)}F_e(x,y,z)
			\\
			&\quad+ \eta \sum_{n=0}^{\infty} (-1)^n \powfact{\theta}{2n+1} \curl{2n+1}F_h(x,y,z).
		\end{split}
	\end{equation*}
	
	By Theorem~\ref{thm.ini}, these series define fields satisfying Maxwell's equations and the prescribed initial conditions.
	Therefore, the Time--Space identities \eqref{ts.h1}--\eqref{ts.e2} apply to the resulting solution.
	Using those identities to replace the repeated spatial curls by the corresponding temporal derivatives at $\theta=0$, we obtain
	\begin{equation}
		\label{H.taylor}
		\begin{split}
			H(x,y,z,\theta)
			&= \sum_{n=0}^{\infty}
			\frac{\partial^{2n}H(x,y,z,0)}
			{\partial\theta^{2n}}
			\frac{\theta^{2n}}{(2n)!}
			\\
			&\quad+
			\sum_{n=0}^{\infty}
			\frac{\partial^{2n+1}H(x,y,z,0)}
			{\partial\theta^{2n+1}}
			\frac{\theta^{2n+1}}{(2n+1)!},
		\end{split}
	\end{equation}
	and
	\begin{equation}
		\label{E.taylor}
		\begin{split}
			E(x,y,z,\theta)
			&=
			\sum_{n=0}^{\infty}
			\frac{\partial^{2n}E(x,y,z,0)}
			{\partial\theta^{2n}}
			\frac{\theta^{2n}}{(2n)!}
			\\
			&\quad+
			\sum_{n=0}^{\infty}
			\frac{\partial^{2n+1}E(x,y,z,0)}
			{\partial\theta^{2n+1}}
			\frac{\theta^{2n+1}}{(2n+1)!}.
		\end{split}
	\end{equation}
	
	Combining the even- and odd-order terms in \eqref{H.taylor} and \eqref{E.taylor} yields
	\begin{equation*}
	H(x,y,z,\theta)
	=
	\sum_{m=0}^{\infty}
	\frac{\partial^m H(x,y,z,0)}
	{\partial\theta^m}
	\frac{\theta^m}{m!},	
	\end{equation*}
	and
	\begin{equation*}
	E(x,y,z,\theta)
	=
	\sum_{m=0}^{\infty}
	\frac{\partial^m E(x,y,z,0)}
	{\partial\theta^m}
	\frac{\theta^m}{m!}.	
	\end{equation*}
	The convergence established in Theorem~\ref{thm.ini} holds on every compact $\theta$-interval. Hence, for every fixed
	$(x,y,z)$, each field agrees with its Taylor series about $\theta=0$ for every real $\theta$. Therefore, $H$ and $E$ are
	real analytic with respect to time, componentwise.
\end{proof}

\subsection{\textbf{Source-driven problem}}
For convenience, define
\begin{align}
	\label{src.terms.1}
	S_1^{(n)}(x,y,z)
	&=
	\sum_{k=0}^{n}(-1)^k
	(\nabla\times)^{2k+1}
	\frac{\partial^{2(n-k)}J(x,y,z,0)}
	{\partial\theta^{2(n-k)}},
	\\
	\label{src.terms.2}
	S_2^{(n)}(x,y,z)
	&=
	\sum_{k=1}^{n}(-1)^{k+1}
	(\nabla\times)^{2k-1}
	\frac{\partial^{2(n-k)+1}J(x,y,z,0)}
	{\partial\theta^{2(n-k)+1}},
	\\
	\label{src.terms.3}
	S_3^{(n)}(x,y,z)
	&=
	\sum_{k=0}^{n}(-1)^{k+1}
	(\nabla\times)^{2k}
	\frac{\partial^{2(n-k)+1}J(x,y,z,0)}
	{\partial\theta^{2(n-k)+1}},
	\\
	\label{src.terms.4}
	S_4^{(n)}(x,y,z)
	&=
	\sum_{k=0}^{n}(-1)^{k+1}
	(\nabla\times)^{2k}
	\frac{\partial^{2(n-k)}J(x,y,z,0)}
	{\partial\theta^{2(n-k)}}.
\end{align}

\begin{thm}\label{thm.src}
	\textbf{(Source-driven solution).}
	Suppose that $J\in C^\infty(\mathbb{R}^4,\mathbb{R}^3)$ and that, for every fixed $(x,y,z)$, the function
	$\theta\mapsto J(x,y,z,\theta)$ is represented by its Taylor series about $\theta=0$ for every $\theta\in\mathbb{R}$.
	Suppose further that there exist constants $C>0$, $A>0$, and $0\leq\beta<1$ such that
	\[
	\left|S_j^{(n)}(x,y,z)\right|
	\leq C A^n(n!)^\beta,
	\qquad j=1,2,3,4,
	\]
	for all sufficiently large $n$, uniformly in $(x,y,z)$. Then the series defined by \eqref{3d.solSrcH} and
	\eqref{3d.solSrcE} converge absolutely and uniformly on compact $\theta$-intervals. The resulting fields satisfy the
	source-driven problem described by \eqref{3d.fmapSrc}.
	\begin{equation}
		\label{3d.solSrcH}
		\begin{split}
			H(x,y,z,\theta)
			&=
			\sum_{n=0}^{\infty}
			\frac{\theta^{2(n+1)}}{(2(n+1))!}
			S_1^{(n)}(x,y,z)
			\\
			&\quad+
			\sum_{n=1}^{\infty}
			\frac{\theta^{2n+1}}{(2n+1)!}
			S_2^{(n)}(x,y,z),
		\end{split}
	\end{equation}
	and
	\begin{equation}
		\label{3d.solSrcE}
		\begin{split}
			E(x,y,z,\theta)
			&=
			\eta
			\sum_{n=0}^{\infty}
			\frac{\theta^{2(n+1)}}{(2(n+1))!}
			S_3^{(n)}(x,y,z)
			\\
			&\quad+
			\eta
			\sum_{n=0}^{\infty}
			\frac{\theta^{2n+1}}{(2n+1)!}
			S_4^{(n)}(x,y,z).
		\end{split}
	\end{equation}
\end{thm}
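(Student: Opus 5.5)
The plan follows the proof of Theorem~\ref{thm.ini}: first justify term-by-term operations, then verify \eqref{3d.1}, \eqref{3d.2} and the zero initial data directly on the series. The one new ingredient is the recursive structure of the coefficients $S_j^{(n)}$, which has to reproduce the source term $-\eta J$ in \eqref{3d.2}.

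\textbf{Step 1: convergence.} For $|\theta|\le T$, the hypothesis $|S_j^{(n)}|\le CA^n(n!)^\beta$ bounds a typical term by $C A^n (n!)^\beta T^{2n+2}/(2n+2)!$. Since $(2n)!\ge (n!)^2$, this is dominated by $C'(AT^2)^n/(n!)^{2-\beta}$, which is summable. The same bound, after an index shift, holds for the $\theta$-differentiated series. For the curl, I would note that $(\nabla\times)S_1^{(n)}$, $(\nabla\times)S_2^{(n)}$, $(\nabla\times)S_3^{(n)}$ and $(\nabla\times)S_4^{(n)}$ are expressible through the other $S_j$ and Taylor coefficients of $J$, via the recursions in Step 2. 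Hence the curled series are also controlled, with the hypothesis understood as covering these quantities, as in Theorem~\ref{thm.ini}. This gives absolute and uniform convergence on compact $\theta$-intervals and legitimizes the term-by-term operations.

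\textbf{Step 2: recursions.} Write $J_m=\partial_\theta^m J(x,y,z,0)$. Splitting off the $k=0$ term or reindexing $k\to k+1$ in the definitions gives the identities I expect to need:
\begin{equation*}
\nabla\times S_4^{(n)}=-S_1^{(n)},\qquad
\nabla\times S_3^{(n)}=S_2^{(n+1)},\qquad
\nabla\times S_1^{(n)}=-S_4^{(n+1)}-J_{2n+2},\qquad
\nabla\times S_2^{(n)}=-S_3^{(n)}-J_{2n+1}.
\end{equation*}
I would double-check the signs; each reduces to comparing two finite sums term by term.

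\textbf{Step 3: the equations.} Differentiate \eqref{3d.solSrcH} in $\theta$: the even powers $\theta^{2n}/(2n)!$ carry coefficient $S_2^{(n)}$, and the odd powers $\theta^{2n+1}/(2n+1)!$ carry $S_1^{(n)}$. Take the curl of \eqref{3d.solSrcE} and use the first two recursions. Then $-\frac1\eta\nabla\times E$ has the same coefficients, which gives \eqref{3d.1}.

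For \eqref{3d.2}, compute $\partial_\theta E$ from \eqref{3d.solSrcE}. Its even-power coefficients are $\eta S_4^{(n)}$ and its odd-power coefficients are $\eta S_3^{(n)}$. Then compute $\eta\nabla\times H$ using the last two recursions: the $\theta^{2n+2}$ coefficient is $-\eta S_4^{(n+1)}-\eta J_{2n+2}$, and the $\theta^{2n+1}$ coefficient is $-\eta S_3^{(n)}-\eta J_{2n+1}$. Subtracting $\eta J$, written as its Taylor series $\sum_m J_m\theta^m/m!$, cancels the $J_m$ terms for $m\ge1$.

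The remaining low-order terms must be matched by hand. The constant term of $\partial_\theta E$ is $\eta S_4^{(0)}=-\eta J_0$, which matches $-\eta J$ at order zero. The $S_2$ sum in \eqref{3d.solSrcH} starts at $n=1$ because $S_2^{(0)}=0$, and one must check that no order-zero curl term appears. Here the sign conventions must be handled with care, and this is the step I expect to be the main obstacle. If a sign fails, I would recheck the recursions against the $n=0,1$ terms explicitly.

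\textbf{Step 4: initial data and divergence.} Every series term carries a positive power of $\theta$, so $H(\cdot,0)=E(\cdot,0)=0$. Together with Steps 1--3, this shows that the fields solve the source-driven problem \eqref{3d.fmapSrc}.
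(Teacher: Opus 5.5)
Your plan is the paper's argument: differentiate and curl the series term by term, shift indices, and let the $k=0$ pieces assemble into the Taylor series of $J$. Packaging the index shifts as four recursions for $\nabla\times S_j^{(n)}$ is a tidy way to organize the same comparisons. However, three of your four recursions have the wrong sign, and with the signs as written Step 3 does not go through. The $\theta^{2n}/(2n)!$ coefficients of $\partial_\theta H$ and $-\frac{1}{\eta}\nabla\times E$ come out as $S_2^{(n)}$ and $-S_2^{(n)}$. In the second equation, your coefficient $-\eta S_4^{(n+1)}-\eta J_{2n+2}$ of $\eta\nabla\times H$ neither matches $\eta S_4^{(n+1)}$ in $\partial_\theta E$ nor cancels against $-\eta J$; the $J$-terms double instead. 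Doing the shift $k\to k+1$ carefully gives
\begin{gather*}
\nabla\times S_4^{(n)}=-S_1^{(n)},\qquad
\nabla\times S_3^{(n)}=-S_2^{(n+1)},\\
\nabla\times S_1^{(n)}=S_4^{(n+1)}+J_{2n+2},\qquad
\nabla\times S_2^{(n)}=S_3^{(n)}+J_{2n+1}.
\end{gather*}
The last identity also holds at $n=0$, since $S_2^{(0)}=0$ and $S_3^{(0)}=-J_1$.

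With these, no separate low-order analysis is needed. The curl of \eqref{3d.solSrcE} becomes $-\eta\sum_{n\ge1}\frac{\theta^{2n}}{(2n)!}S_2^{(n)}-\eta\sum_{n\ge0}\frac{\theta^{2n+1}}{(2n+1)!}S_1^{(n)}=-\eta\,\partial_\theta H$. For the second equation,
\[
\eta\nabla\times H-\eta J
=\eta\sum_{n\ge1}\frac{\theta^{2n}}{(2n)!}S_4^{(n)}
+\eta\sum_{n\ge1}\frac{\theta^{2n+1}}{(2n+1)!}S_3^{(n)}
-\eta J_0-\eta J_1\theta .
\]
The leftover terms are exactly $\eta S_4^{(0)}$ and $\eta S_3^{(0)}\theta$, so the right-hand side is $\partial_\theta E$. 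The step you expected to be the main obstacle therefore reduces to $S_4^{(0)}=-J_0$ and $S_3^{(0)}=-J_1$. The paper reaches the same identity from the other side: it splits the $k=0$ terms off $\partial_\theta E$ to produce $-\eta J$, then matches the remainder with the directly computed $\eta\nabla\times H$.

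The corrected recursions also make your Step 1 remark precise, so no extra bounds need to be read into the hypothesis. The curled $E$-series has coefficients $-\eta S_2^{(n+1)}$ and $-\eta S_1^{(n)}$. The curled $H$-series has coefficients $S_4^{(n+1)}+J_{2n+2}$ and $S_3^{(n)}+J_{2n+1}$. Both are therefore controlled by the assumed bounds on $S_j^{(n)}$ together with the Taylor series of $J$, which has infinite radius of convergence by assumption. This is more explicit than the paper's appeal to ``analogous bounds''. Like the paper, though, you still take convergence of the curled series as enough to justify interchanging curl and summation.
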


\begin{proof}
	We first justify the term-by-term operations used below. Let $T>0$ and suppose that $|\theta|\leq T$. By the assumed bounds
	on $S_j^{(n)}$, the terms in \eqref{3d.solSrcH} and \eqref{3d.solSrcE} are bounded, up to fixed factors and harmless
	index shifts, by
	\[
	C\frac{(AT^2)^n}{(n!)^{2-\beta}}.
	\]
	Because $0\leq\beta<1$, the resulting numerical series converges. The corresponding series obtained by differentiating
	with respect to $\theta$, applying the curl operator, shifting indices, or separating finitely many terms satisfy analogous
	bounds. Hence, the operations below are valid on compact $\theta$-intervals.
	
	Differentiating \eqref{3d.solSrcH} with respect to $\theta$, we	obtain
	\begin{equation}
		\label{proof.src.H.1}
		\begin{split}
			\frac{\partial H(x,y,z,\theta)}{\partial\theta}
			&=
			\sum_{n=0}^{\infty}
			\powfact{\theta}{2n+1}
			\sum_{k=0}^{n}
			(-1)^k
			\curl{2k+1}
			\pt{J(x,y,z,0)}{\theta}{2(n-k)}
			\\
			&\quad+
			\sum_{n=1}^{\infty}
			\powfact{\theta}{2n}
			\sum_{k=1}^{n}
			(-1)^{k+1}
			\curl{2k-1}
			\pt{J(x,y,z,0)}{\theta}{2(n-k)+1}.
		\end{split}
	\end{equation}
	
	Applying the curl operator to \eqref{3d.solSrcE}, we obtain
	\begin{equation*}
		\begin{split}
			\nabla\times E(x,y,z,\theta)
			&=
			\eta
			\sum_{n=0}^{\infty}
			\powfact{\theta}{2(n+1)}
			\sum_{k=0}^{n}
			(-1)^{k+1}
			\curl{2k+1}
			\pt{J(x,y,z,0)}{\theta}{2(n-k)+1}
			\\
			&\quad+
			\eta
			\sum_{n=0}^{\infty}
			\powfact{\theta}{2n+1}
			\sum_{k=0}^{n}
			(-1)^{k+1}
			\curl{2k+1}
			\pt{J(x,y,z,0)}{\theta}{2(n-k)}.
		\end{split}
	\end{equation*}
	Reindexing the first series gives
	\begin{equation}
		\label{proof.src.E.1}
		\begin{split}
			&\nabla\times E(x,y,z,\theta)
			=
			\eta
			\sum_{n=1}^{\infty}
			\powfact{\theta}{2n}
			\sum_{k=1}^{n}
			(-1)^k
			\curl{2k-1}
			\pt{J(x,y,z,0)}{\theta}{2(n-k)+1}
			\\
			&\quad+
			\eta
			\sum_{n=0}^{\infty}
			\powfact{\theta}{2n+1}
			\sum_{k=0}^{n}
			(-1)^{k+1}
			\curl{2k+1}
			\pt{J(x,y,z,0)}{\theta}{2(n-k)}.
		\end{split}
	\end{equation}
	Comparing \eqref{proof.src.H.1} and	\eqref{proof.src.E.1}, we find
	\[
	\frac{\partial H}{\partial\theta}
	=
	-\frac{1}{\eta}\nabla\times E,
	\]
	which is \eqref{3d.1}.
	
	Next, differentiating \eqref{3d.solSrcE} with respect to $\theta$ gives
	\begin{equation*}
		\begin{split}
			\frac{\partial E(x,y,z,\theta)}{\partial\theta}
			&=
			\eta
			\sum_{n=0}^{\infty}
			\powfact{\theta}{2n+1}
			\sum_{k=0}^{n}
			(-1)^{k+1}
			\curl{2k}
			\pt{J(x,y,z,0)}{\theta}{2(n-k)+1}
			\\
			&\quad+
			\eta
			\sum_{n=0}^{\infty}
			\powfact{\theta}{2n}
			\sum_{k=0}^{n}
			(-1)^{k+1}
			\curl{2k}
			\pt{J(x,y,z,0)}{\theta}{2(n-k)}.
		\end{split}
	\end{equation*}
	Separating the $k=0$ terms yields
	\begin{equation*}
		\begin{split}
			\frac{\partial E(x,y,z,\theta)}{\partial\theta}
			&=
			-\eta
			\sum_{n=0}^{\infty}
			\powfact{\theta}{2n+1}
			\pt{J(x,y,z,0)}{\theta}{2n+1}
			\\
			&\quad-
			\eta
			\sum_{n=0}^{\infty}
			\powfact{\theta}{2n}
			\pt{J(x,y,z,0)}{\theta}{2n}
			\\
			&\quad+
			\eta
			\sum_{n=1}^{\infty}
			\powfact{\theta}{2n+1}
			\sum_{k=1}^{n}
			(-1)^{k+1}
			\curl{2k}
			\pt{J(x,y,z,0)}{\theta}{2(n-k)+1}
			\\
			&\quad+
			\eta
			\sum_{n=1}^{\infty}
			\powfact{\theta}{2n}
			\sum_{k=1}^{n}
			(-1)^{k+1}
			\curl{2k}
			\pt{J(x,y,z,0)}{\theta}{2(n-k)}.
		\end{split}
	\end{equation*}
	The first two series combine to give
	\[
	-\eta
	\sum_{m=0}^{\infty}
	\frac{\theta^m}{m!}
	\pt{J(x,y,z,0)}{\theta}{m}
	=
	-\eta J(x,y,z,\theta),
	\]
	where the final equality follows from the assumed Taylor representation of $J$ about $\theta=0$.
	
	Reindexing the last even-order series, we therefore obtain
	\begin{equation}
		\label{proof.src.E.2}
		\begin{split}
			\frac{\partial E(x,y,z,\theta)}{\partial\theta}
			&=
			-\eta J(x,y,z,\theta)
			\\
			&\quad+
			\eta
			\sum_{n=1}^{\infty}
			\powfact{\theta}{2n+1}
			\sum_{k=1}^{n}
			(-1)^{k+1}
			\curl{2k}
			\pt{J(x,y,z,0)}{\theta}{2(n-k)+1}
			\\
			&\quad+
			\eta
			\sum_{n=0}^{\infty}
			\powfact{\theta}{2(n+1)}
			\sum_{k=0}^{n}
			(-1)^k
			\curl{2k+2}
			\pt{J(x,y,z,0)}{\theta}{2(n-k)}.
		\end{split}
	\end{equation}
	
	Applying the curl operator to \eqref{3d.solSrcH} gives
	\begin{equation}
		\label{proof.src.H.2}
		\begin{split}
			\nabla\times H(x,y,z,\theta)
			&=
			\sum_{n=0}^{\infty}
			\powfact{\theta}{2(n+1)}
			\sum_{k=0}^{n}
			(-1)^k
			\curl{2k+2}
			\pt{J(x,y,z,0)}{\theta}{2(n-k)}
			\\
			&\quad+
			\sum_{n=1}^{\infty}
			\powfact{\theta}{2n+1}
			\sum_{k=1}^{n}
			(-1)^{k+1}
			\curl{2k}
			\pt{J(x,y,z,0)}{\theta}{2(n-k)+1}.
		\end{split}
	\end{equation}
	Comparison of \eqref{proof.src.E.2} and	\eqref{proof.src.H.2} yields
	\[
	\frac{\partial E}{\partial\theta}
	=
	\eta\nabla\times H-\eta J,
	\]
	which is \eqref{3d.2}.
	
	Finally, setting $\theta=0$ in \eqref{3d.solSrcH} and \eqref{3d.solSrcE}, every summation term vanishes. Hence,
	\[
	H(x,y,z,0)=0,
	\qquad
	E(x,y,z,0)=0.
	\]
	These are the prescribed zero initial conditions.
	
	Therefore, the fields defined by \eqref{3d.solSrcH} and	\eqref{3d.solSrcE} satisfy the source-driven problem \eqref{3d.fmapSrc}.
\end{proof}

\begin{corollary}
	\textbf{(Time analyticity of the source-driven solutions).}
	For every fixed $(x,y,z)\in\mathbb{R}^3$, the solution fields $H(x,y,z,\theta)$ and $E(x,y,z,\theta)$ given by
	Theorem~\ref{thm.src} are real analytic functions of $\theta$.
\end{corollary}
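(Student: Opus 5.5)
The plan follows the proof of the corresponding corollary for the initial-value problem. We show that, for fixed $(x,y,z)$, the series \eqref{3d.solSrcH} and \eqref{3d.solSrcE} are already power series in $\theta$ with coefficients independent of $\theta$, and that they converge for every real $\theta$. A function given by an everywhere-convergent power series is real analytic. It then remains only to identify the coefficients with the Taylor coefficients $\partial^m H(x,y,z,0)/\partial\theta^m$ and $\partial^m E(x,y,z,0)/\partial\theta^m$, which confirms that each field agrees with its Taylor series about $\theta=0$.

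\textbf{Step 1: the series is a power series.} Fix $(x,y,z)$ and write $H(x,y,z,\theta)=\sum_{m\ge0} a_m\theta^m/m!$. The coefficients are
\[
a_0=a_1=0,\qquad a_{2(n+1)}=S_1^{(n)}(x,y,z)\ (n\ge0),\qquad a_{2n+1}=S_2^{(n)}(x,y,z)\ (n\ge1).
\]
Similarly, $E=\sum_m b_m\theta^m/m!$ with
\[
b_0=0,\qquad b_{2(n+1)}=\eta S_3^{(n)},\qquad b_{2n+1}=\eta S_4^{(n)}.
\]
These coefficients are constant vectors once $(x,y,z)$ is fixed.

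\textbf{Step 2: convergence on all of $\mathbb{R}$.} By the bound $|S_j^{(n)}|\le CA^n(n!)^\beta$ assumed in Theorem~\ref{thm.src}, we have
\[
\frac{|a_m|\,|\theta|^m}{m!}\lesssim \frac{C(A|\theta|^2)^{n}}{(n!)^{2-\beta}}\quad\text{with } m\approx 2n.
\]
This uses $(2n)!\ge (n!)^2$. Hence the power series has infinite radius of convergence, which is exactly the estimate already given at the start of the proof of Theorem~\ref{thm.src}. A power series with infinite radius of convergence is real analytic on $\mathbb{R}$, with derivatives obtained term by term.

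\textbf{Step 3: identifying the Taylor coefficients.} Differentiating term by term $m$ times and setting $\theta=0$ gives $\partial^m H(x,y,z,0)/\partial\theta^m=a_m$, and likewise for $E$. Therefore
\[
H(x,y,z,\theta)=\sum_{m\ge0}\frac{\partial^m H(x,y,z,0)}{\partial\theta^m}\frac{\theta^m}{m!}\quad\text{for all real }\theta,
\]
and the same holds for $E$, componentwise. Optionally, one can cross-check that these coefficients match the Time--Space identities applied to the solution, in the same way as in the proof of the initial-value corollary. The argument does not depend on that check.

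\textbf{Main obstacle.} The only delicate point is Step 2. The hypothesis bounds $S_j^{(n)}$ only for sufficiently large $n$. The finitely many earlier coefficients are finite vectors for fixed $(x,y,z)$ and do not affect convergence. We must also make sure that the factorial comparison $(2n+1)!\ge(2n)!\ge(n!)^2$ gives a convergent majorant for every $T$, since the exponent $2-\beta>1$ dominates the geometric factor. Once this is done, analyticity follows from the standard fact that convergent power series are analytic.
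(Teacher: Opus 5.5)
Your proof is correct, but it takes a more direct route than the paper.

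\textbf{The paper's route.} The paper never treats \eqref{3d.solSrcH}--\eqref{3d.solSrcE} simply as power series. Instead, it uses \eqref{ts.jh1}--\eqref{ts.je2} to recognize $S_1^{(n)},S_2^{(n)},S_3^{(n)},S_4^{(n)}$ as the Time--Space source terms $J_{h,2n+2}$, $J_{h,2n+1}$, $J_{e,2n+2}$, $J_{e,2n+1}$ evaluated at $\theta=0$. It then applies the Time--Space identities with the vanishing initial fields, so every curl of $H(x,y,z,0)$ and $E(x,y,z,0)$ drops out. This identifies those terms with $\partial^m H(x,y,z,0)/\partial\theta^m$ and $\partial^m E(x,y,z,0)/\partial\theta^m$. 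The convergence from Theorem~\ref{thm.src} finishes the argument.

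\textbf{Your route.} You get the same identification by term-by-term differentiation of a power series with $\theta$-independent coefficients and infinite radius of convergence. You therefore need neither Maxwell's equations nor the Time--Space theorem. In particular, you avoid that theorem's hypothesis $H,E\in C^\infty(\mathbb{R}^4,\mathbb{R}^3)$, which Theorem~\ref{thm.src} does not explicitly verify for the series fields. Your estimate also shows that the fields extend to entire functions of complex $\theta$.

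\textbf{What each buys.} The paper's route gives structural information. It shows that the formation functions $S_j^{(n)}$ are exactly the coefficients forced by Maxwell's equations through the Time--Space theorem, which ties the formula to the paper's uniqueness and local-in-time viewpoint. For the analyticity claim itself, both arguments rest on the same convergence estimate. Your version makes clear that the Time--Space identification is not logically required.
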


\begin{proof}
	Because the source-driven problem has zero initial fields, the Time--Space identities \eqref{ts.h1}, \eqref{ts.h2},
	\eqref{ts.e1}, and \eqref{ts.e2} reduce at $\theta=0$ to
	\begin{align}
		\label{ts.h1a}
		\pt{H(x,y,z,0)}{\theta}{2n+1}
		&=
		J_{h,2n+1}(x,y,z,0),
		\\
		\label{ts.h2a}
		\pt{H(x,y,z,0)}{\theta}{2n+2}
		&=
		J_{h,2n+2}(x,y,z,0),
		\\
		\label{ts.e1a}
		\pt{E(x,y,z,0)}{\theta}{2n+1}
		&=
		\eta J_{e,2n+1}(x,y,z,0),
		\\
		\label{ts.e2a}
		\pt{E(x,y,z,0)}{\theta}{2n+2}
		&=
		\eta J_{e,2n+2}(x,y,z,0).
	\end{align}
	
	Substituting \eqref{src.terms.1}--\eqref{src.terms.4} into \eqref{3d.solSrcH} and \eqref{3d.solSrcE}, respectively, gives
	\begin{equation}
		\label{src.proof.H.2b}
		\begin{split}
			H(x,y,z,\theta)
			&=
			\sum_{n=0}^{\infty}
			\sum_{k=0}^{n}
			(-1)^k
			(\nabla\times)^{2k+1}
			\frac{\partial^{2(n-k)}J(x,y,z,0)}
			{\partial\theta^{2(n-k)}}
			\frac{\theta^{2(n+1)}}{(2(n+1))!}
			\\
			&\quad+
			\sum_{n=1}^{\infty}
			\sum_{k=1}^{n}
			(-1)^{k+1}
			(\nabla\times)^{2k-1}
			\frac{\partial^{2(n-k)+1}J(x,y,z,0)}
			{\partial\theta^{2(n-k)+1}}
			\frac{\theta^{2n+1}}{(2n+1)!},
		\end{split}
	\end{equation}
	and
	\begin{equation}
		\label{src.proof.E.2b}
		\begin{split}
			E(x,y,z,\theta)
			&=
			\eta
			\sum_{n=0}^{\infty}
			\sum_{k=0}^{n}
			(-1)^{k+1}
			(\nabla\times)^{2k}
			\frac{\partial^{2(n-k)+1}J(x,y,z,0)}
			{\partial\theta^{2(n-k)+1}}
			\frac{\theta^{2(n+1)}}{(2(n+1))!}
			\\
			&\quad+
			\eta
			\sum_{n=0}^{\infty}
			\sum_{k=0}^{n}
			(-1)^{k+1}
			(\nabla\times)^{2k}
			\frac{\partial^{2(n-k)}J(x,y,z,0)}
			{\partial\theta^{2(n-k)}}
			\frac{\theta^{2n+1}}{(2n+1)!}.
		\end{split}
	\end{equation}
	
	Using \eqref{ts.jh1} and \eqref{ts.jh2} in \eqref{src.proof.H.2b}, and using \eqref{ts.je1} and \eqref{ts.je2} in \eqref{src.proof.E.2b}, we obtain
	\begin{equation}
		\label{src.proof.H.2}
		\begin{split}
			H(x,y,z,\theta)
			&=
			\sum_{n=0}^{\infty}
			J_{h,2n+2}(x,y,z,0)
			\frac{\theta^{2(n+1)}}{(2(n+1))!}
			\\
			&\quad+
			\sum_{n=0}^{\infty}
			J_{h,2n+1}(x,y,z,0)
			\frac{\theta^{2n+1}}{(2n+1)!},
		\end{split}
	\end{equation}
	and
	\begin{equation}
		\label{src.proof.E.2}
		\begin{split}
			E(x,y,z,\theta)
			&=
			\sum_{n=0}^{\infty}
			\eta J_{e,2n+2}(x,y,z,0)
			\frac{\theta^{2(n+1)}}{(2(n+1))!}
			\\
			&\quad+
			\sum_{n=0}^{\infty}
			\eta J_{e,2n+1}(x,y,z,0)
			\frac{\theta^{2n+1}}{(2n+1)!}.
		\end{split}
	\end{equation}
	In \eqref{src.proof.H.2}, the $n=0$ term of the second sum is zero, since
	\[
	\frac{\partial H(x,y,z,0)}{\partial\theta}
	=
	-\frac{1}{\eta}
	\nabla\times E(x,y,z,0)
	=
	0.
	\]
	
	Substituting \eqref{ts.h1a} and \eqref{ts.h2a} into \eqref{src.proof.H.2}, and substituting \eqref{ts.e1a} and
	\eqref{ts.e2a} into \eqref{src.proof.E.2}, gives
	\begin{equation}
		\label{src.proof.H.1}
		\begin{split}
			H(x,y,z,\theta)
			&=
			\sum_{n=1}^{\infty}
			\frac{\partial^{2n}H(x,y,z,0)}
			{\partial\theta^{2n}}
			\frac{\theta^{2n}}{(2n)!}
			\\
			&\quad+
			\sum_{n=0}^{\infty}
			\frac{\partial^{2n+1}H(x,y,z,0)}
			{\partial\theta^{2n+1}}
			\frac{\theta^{2n+1}}{(2n+1)!},
		\end{split}
	\end{equation}
	and
	\begin{equation}
		\label{src.proof.E.1}
		\begin{split}
			E(x,y,z,\theta)
			&=
			\sum_{n=1}^{\infty}
			\frac{\partial^{2n}E(x,y,z,0)}
			{\partial\theta^{2n}}
			\frac{\theta^{2n}}{(2n)!}
			\\
			&\quad+
			\sum_{n=0}^{\infty}
			\frac{\partial^{2n+1}E(x,y,z,0)}
			{\partial\theta^{2n+1}}
			\frac{\theta^{2n+1}}{(2n+1)!}.
		\end{split}
	\end{equation}
	
	Since $H(x,y,z,0)=E(x,y,z,0)=0$, the missing zeroth-order terms may be included in the even-order sums. Consequently,
	\begin{align*}
		H(x,y,z,\theta)
		&=
		\sum_{m=0}^{\infty}
		\frac{\partial^mH(x,y,z,0)}
		{\partial\theta^m}
		\frac{\theta^m}{m!},
		\\
		E(x,y,z,\theta)
		&=
		\sum_{m=0}^{\infty}
		\frac{\partial^mE(x,y,z,0)}
		{\partial\theta^m}
		\frac{\theta^m}{m!}.
	\end{align*}
	
	By Theorem~\ref{thm.src}, these series converge on every compact $\theta$-interval. Thus, for every fixed $(x,y,z)$, the fields
	agree with their Taylor series about $\theta=0$. Therefore, $H$ and $E$ are real analytic with respect to time, componentwise.
\end{proof}

Since Maxwell's equations are linear, the solution operator defined by \eqref{3d.solSrcH} and \eqref{3d.solSrcE} preserves linear combinations of sources.
\begin{thm}[Superposition principle]
	Let $\{H_1,E_1\}$ and $\{H_2,E_2\}$ be the solutions
	corresponding to the sources $J_1$ and $J_2$, respectively.
	Then, for any $a,b\in\mathbb{R}$,
	\[
	\{aH_1+bH_2,\;
	aE_1+bE_2\}
	\]
	is the solution corresponding to the source
	\[
	aJ_1+bJ_2.
	\]
\end{thm}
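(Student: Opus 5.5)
The plan is to argue directly at the level of the solution formulas \eqref{3d.solSrcH} and \eqref{3d.solSrcE}, since these define the solution operator in Theorem~\ref{thm.src}. I would not re-verify Maxwell's equations by hand. The key observation is that every quantity entering the construction depends linearly on $J$. Each $\partial^{m}J(x,y,z,0)/\partial\theta^{m}$ is linear in $J$. The repeated curl $(\nabla\times)^{n}$ is a linear operator. Hence each coefficient $S_j^{(n)}$ in \eqref{src.terms.1}--\eqref{src.terms.4} is a finite linear combination of such terms. It follows that $S_j^{(n)}[aJ_1+bJ_2]=aS_j^{(n)}[J_1]+bS_j^{(n)}[J_2]$ for $j=1,\dots,4$ and every $n$.

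The first step is to check that $J=aJ_1+bJ_2$ satisfies the hypotheses of Theorem~\ref{thm.src}. It is $C^\infty$. For each fixed $(x,y,z)$ it equals its Taylor series in $\theta$, because a linear combination of two everywhere-convergent power series is again one. For the growth bounds, if $|S_j^{(n)}[J_i]|\le C_iA_i^n(n!)^{\beta_i}$, then by the linearity above
\[
|S_j^{(n)}[J]|\le (|a|C_1+|b|C_2)\max(A_1,A_2)^n(n!)^{\max(\beta_1,\beta_2)}
\]
for all sufficiently large $n$, uniformly in space. Here we use $A_i^n\le\max(A_1,A_2)^n$ and $(n!)^{\beta_i}\le(n!)^{\max(\beta_1,\beta_2)}$. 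So Theorem~\ref{thm.src} applies to $J$, and its series converge absolutely.

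The second step substitutes the linear decomposition of $S_j^{(n)}$ into \eqref{3d.solSrcH} and \eqref{3d.solSrcE}. Because all three series (for $J_1$, for $J_2$, and for $J$) converge absolutely, we may split the sums termwise. This gives $H[J]=aH_1+bH_2$ and $E[J]=aE_1+bE_2$.

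As a consistency check, and to cover the interpretation of ``the solution'' as the unique solution, I would also note a direct argument. Since \eqref{3d.1}--\eqref{3d.2} are linear in $(H,E,J)$, the combination $\{aH_1+bH_2,aE_1+bE_2\}$ satisfies them with source $aJ_1+bJ_2$ and zero initial data. By the uniqueness theorem in the real-analytic class, it therefore coincides with the solution produced by Theorem~\ref{thm.src}.

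The only genuinely non-routine point, and the main obstacle, is the uniform bound in the first step. Its constants must not depend on $(x,y,z)$, and the phrase ``sufficiently large $n$'' must be reconciled by taking the larger of the two thresholds. The rest is bookkeeping.
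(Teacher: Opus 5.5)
Your proposal is correct and is essentially the paper's own argument. The paper gives no formal proof, only the remark that Maxwell's equations are linear, so the solution operator defined by \eqref{3d.solSrcH} and \eqref{3d.solSrcE} preserves linear combinations of sources; you make this precise through the linearity of each $S_j^{(n)}$ in $J$, termwise splitting of convergent series, and, as a backup, linearity of \eqref{3d.1}--\eqref{3d.2} together with uniqueness in the real-analytic class. The growth bound you flag as the main obstacle is routine rather than delicate, since a linear combination of two convergent series automatically converges to the same combination of their sums; it is needed only to confirm that Theorem~\ref{thm.src} itself applies to $aJ_1+bJ_2$.
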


Although formulas \eqref{3d.solSrcH} and \eqref{3d.solSrcE} are written with the expansion center at $\theta=0 $, the Time–Space theorem is valid at an arbitrary time $\theta_0 $. Consequently, the same construction provides a local-in-time analytical representation from $\theta_0$ to $\theta_0+\bigtriangleup_{\theta} $, with the source derivatives evaluated at $\theta_0$. Under the stated convergence assumptions, these derivatives encode the local behavior of the source over the interval of convergence. Thus, the source-driven solution should be interpreted as a local evolution formula rather than a representation depending exclusively on source information at the initial time.

\section{\textbf{Deformed trigonometric functions}}
\label{sect.deformedFunction}
\subsection{Standard and deformed trigonometric functions}
Consider the classical traveling-wave sine and cosine functions, which admit the series representation for $\theta \ne 0$
\begin{align}
		\label{form.sin}
	\sin (\omega(x-\theta)) 
	&= \sum_{n=0}^{\infty}\frac{(-1)^n\theta^{2n+1}}{(2n+1)!} w^{(n)}_1(x,\theta) \,, \\
	\label{form.cos}
	\cos (\omega(x-\theta)) 
	&= \sum_{n=0}^{\infty} \frac{(-1)^n\theta^{2n}}{(2n)!} w^{(n)}_2(x,\theta) \,.
\end{align}
where
\begin{align}
	\label{form.w1}
	w^{(n)}_1(x,\theta) &= \sum_{k=0}^{2n+1}\binom{2n+1}{k}(-1)^{k+1}(x/\theta)^k \,, \\
	\label{form.w2}
	w^{(n)}_2(x,\theta) &= \sum_{k=0}^{2n}\binom{2n}{k}(-1)^k(x/\theta)^k \,.
\end{align}
Comparing \eqref{form.sin} and \eqref{form.cos} with the solution formulas \eqref{3d.solIniH}, \eqref{3d.solIniE}, \eqref{3d.solSrcH} and \eqref{3d.solSrcE}, we see the following common structure:
\begin{align}
	\label{sind}
	\mathrm{sind}(x,y,z,\theta) &= \sum_{n=0}^{\infty}\frac{(-1)^n\theta^{2n+1}}{(2n+1)!} \mathrm{d}^{(n)}_1(x,y,z,\theta) \,, \\
	\label{cosd}
	\mathrm{cosd}(x,y,z,\theta) &= \sum_{n=0}^{\infty}\frac{(-1)^n\theta^{2n}}{(2n)!} \mathrm{d}^{(n)}_2(x,y,z,\theta) \,,
\end{align}
This common series structure motivates the introduction of the following definition. We refer to functions of the form \eqref{sind} and \eqref{cosd} as deformed trigonometric functions, where $\mathrm{d}^{(n)}_1(x,y,z,\theta) $ and $\mathrm{d}^{(n)}_2(x,y,z,\theta) $ 
are called the formation functions, which characterize the particular deformation.

The distinction between the classical and deformed cases lies entirely in the formation functions $w^{(n)}_i$ and $d^{(n)}_i$. The factorial denominators and parity structure remain unchanged, while the formation functions encode the geometry and physics of the underlying problem.

These structures are not postulated a priori. Rather, they arise naturally from the derivation of the solution formulas, indicating that they are intrinsic to the analytical representation of electromagnetic wave propagation.

Since these structures arise from the solution process rather than being postulated, their mathematical properties remain largely unexplored. The formation functions $\mathrm{d}^{(n)}_1(x,y,z,\theta)$ and $\mathrm{d}^{(n)}_2(x,y,z,\theta)$ are determined by the underlying differential equations. For problems outside electromagnetics, analogous formation functions may arise with different mathematical structures.

The quantities introduced previously for notational convenience, namely \eqref{ini.terms.1}, \eqref{ini.terms.2}, \eqref{src.terms.1}, \eqref{src.terms.2}, \eqref{src.terms.3} and \eqref{src.terms.4}, are precisely the formation functions associated with the analytical solutions of Maxwell's equations. They are determined entirely by the initial fields and the source terms. For several representative initial-value and source-driven problems, the following sections present the corresponding deformed trigonometric functions forming the algebraic expressions of the electromagnetic fields.

\subsection{Double-binomial coefficients}
\textbf{Double-binomial coefficients}. For three integers, $h, n$ and $k$, $h \ge 0, n \ge 0$, we introduce the following two families of double-binomial coefficients. 
\begin{equation}
	\label{phnk}
	\begin{split}
		p_{h,n,k} &:= \frac{n!(k+h)!(2(n+h)+1)!}{(n-k)!k!(n+h)!(2(k+h)+1)!}
		\\
		p_{h,n,k} &:= 0, \quad \text{if } k<0 \lor k>n
	\end{split}
\end{equation}

\begin{equation}
	\label{qhnk}
	\begin{split}
		q_{h,n,k} &:= \frac{(n+1)!(k+1+h)!(2(n+h))!}{(n-k)!(k+1)!(n+1+h)!(2(k+h))!}
		\\
		&h \ge 0, n \ge 0, n \ge k \ge 0
		\\
		q_{h,n,k} &:= 0, \quad \text{if } k<-h \lor k>n
	\end{split}
\end{equation}
The ordinary binomial coefficient and the double-binomial coefficients have parallel factorial structures:
\[
\binom{n}{k} = \frac{n!}{(n-k)!k!}, q_{0,n,k} = \frac{(2n)!}{(n-k)!(2k)!}, p_{0,n,k} = \frac{(2n+1)!}{(n-k)!(2k+1)!} .
\]

\subsection{Deformed trigonometric functions in case studies}
For all $\xi,\sigma, \phi, \omega \in  \mathbb{R} $ the following deformed trigonometric functions appearing in case studies are formed by the double-binomial coefficients given by \eqref{phnk} and \eqref{qhnk}.

The double-binomial coefficients reveal that the formation functions of the deformed trigonometric functions remain closely connected to those of the classical sine and cosine functions. Thus, the deformation preserves the underlying Taylor-series structure while enriching the coefficient functions that encode the initial conditions and source terms.

These functions constitute successive extensions of the same deformed trigonometric framework. The additional variables $\omega$ and $\phi$ reflect the increasing complexity of the corresponding initial-value and source-driven problems.

\label{def.cos.sin}
\begin{align}
	\label{cosp}
	\mathrm {cosp} (\xi,\sigma)_{h,s,d} &= \sum_{n=0}^{\infty}\frac{(-1)^n\xi^{2(n+s)}}{(2(n+s))!}\sum_{k=0}^{n+d}(-1)^k p_{h+d,n,k}\sigma^{2k}
	\\
	\label{sinp}
	\mathrm {sinp} (\xi,\sigma)_{h,s,d} &= \sum_{n=0}^{\infty}\frac{(-1)^n\xi^{2(n+s)+1}}{(2(n+s)+1)!}\sum_{k=0}^{n+d}(-1)^k p_{h+d,n,k}\sigma^{2k}
	\\
	\label{cosq}
	\mathrm {cosq} (\xi,\sigma)_{h,s,d} &= \sum_{n=0}^{\infty}\frac{(-1)^n\xi^{2(n+s)}}{(2(n+s))!}\sum_{k=0}^{n+d}(-1)^k q_{h+d,n,k}\sigma^{2k}
	\\
	\label{sinq}
	\mathrm {sinq} (\xi,\sigma)_{h,s,d} &= \sum_{n=0}^{\infty}\frac{(-1)^n\xi^{2(n+s)+1}}{(2(n+s)+1)!}\sum_{k=0}^{n+d}(-1)^k q_{h+d,n,k}\sigma^{2k}
\end{align}

\begin{align}
	\label{cosp.3}
	\mathrm {cosp} (\xi,\sigma, \omega)_{h,s} &= \sum_{n=0}^{\infty}\frac{(-1)^n\xi^{2(n+s)}}{(2(n+s))!} \sum_{m=0}^{n}\omega^{2(n-m)}\sum_{k=0}^{m}(-1)^k p_{h,m,k}\sigma^{2k}
	\\
	\label{sinp.3}
	\mathrm {sinp} (\xi,\sigma, \omega)_{h,s} &= \sum_{n=0}^{\infty}\frac{(-1)^n\xi^{2(n+s)+1}}{(2(n+s)+1)!}\sum_{m=0}^{n}  \omega^{2(n-m)}\sum_{k=0}^{m}(-1)^k p_{h,m,k}\sigma^{2k}
\end{align}

\begin{align}
	\label{cospo.4}
	\mathrm {cospo} (\xi,\sigma, \phi, \omega)_{h,s} &= \sum_{n=0}^{\infty}\frac{(-1)^n\xi^{2(n+s)}}{(2(n+s))!} \sum_{m=0}^{n} f_{o,n-m}(\phi) \omega^m\sum_{k=0}^{m}(-1)^k p_{h,m,k}\sigma^{2k}
	\\
	\label{cospe.4}
	\mathrm {cospe} (\xi,\sigma, \phi, \omega)_{h,s} &= \sum_{n=0}^{\infty}\frac{(-1)^n\xi^{2(n+s)}}{(2(n+s))!} \sum_{m=0}^{n} f_{e,n-m}(\phi) \omega^m\sum_{k=0}^{m}(-1)^k p_{h,m,k}\sigma^{2k}
	\\
	\label{sinpo.4}
	\mathrm {sinpo} (\xi,\sigma, \phi, \omega)_{h,s} &= \sum_{n=0}^{\infty}\frac{(-1)^n\xi^{2(n+s)+1}}{(2(n+s)+1)!} \sum_{m=0}^{n} f_{o,n-m}(\phi) \omega^m\sum_{k=0}^{m}(-1)^k p_{h,m,k}\sigma^{2k}
	\\
	\label{sinpe.4}
	\mathrm {sinpe} (\xi,\sigma, \phi, \omega)_{h,s} &= \sum_{n=0}^{\infty}\frac{(-1)^n\xi^{2(n+s)+1}}{(2(n+s)+1)!} \sum_{m=0}^{n} f_{e,n-m}(\phi) \omega^m\sum_{k=0}^{m}(-1)^k p_{h,m,k}\sigma^{2k}
\end{align}

where functions $f_{o,n}(\phi) $ and $f_{e,n}(\phi) $ are defined by
\begin{align}
	\label{ricker.fo}
	f_{o,n}(\phi) &= \sum_{k=0}^{n+1}(-1)^kp_{0,n+1,k}(2\phi)^{2k+1}
	\\
	\label{ricker.fe}
	f_{e,n}(\phi) &= \sum_{k=0}^{n+1}(-1)^kq_{0,n+1,k}(2\phi)^{2k}
\end{align}

\subsection{Formation and deformation}

Table \ref{tbl.deforms} summarizes several connections between classical and deformed trigonometric functions. Its purpose is
to illustrate their common structure and the mechanism of deformation rather than to provide a comprehensive comparison.

\begin{table}[H]
	\centering
	\caption{Comparison of classical and deformed trigonometric structures}
	\label{tbl.deforms}
	\begin{tabular}{
			>{\raggedright\arraybackslash}p{0.20\textwidth}
			>{\raggedright\arraybackslash}p{0.35\textwidth}
			>{\raggedright\arraybackslash}p{0.35\textwidth}
			}
		\hline
		Comparison
		& Classical
		& Deformed
		\\
		\hline
		Representative functions
		& $\sin(x-\theta)$, $\cos(x-\theta)$
		& Solutions of Maxwell's equations
		\\
		\hline
		
		Series structure
		&
		$\displaystyle
		\sum_{n=0}^{\infty}
		\frac{(-1)^n\theta^{2n+1}}{(2n+1)!}
		w_1^{(n)}(x,\theta)$
		&
		$\displaystyle
		\sum_{n=0}^{\infty}
		\frac{(-1)^n\theta^{2n+1}}{(2n+1)!}
		d_1^{(n)}(x,y,z,\theta)$
		\\
		\hline
		
		Formation functions
		&
		$w_1^{(n)}$ and $w_2^{(n)}$, formed using ordinary binomial coefficients $C^{2n+1}_{k}$ and $C^{2n}_k$
		&
		$d_1^{(n)}$ and $d_2^{(n)}$, formed using the double-binomial coefficients
		$p_{h,n,k}$ and $q_{h,n,k}$
		\\
		\hline
		
		Representative coefficients
		&
		$\displaystyle 
		C^{n}_k = \frac{n!}{(n-k)!k!}$
		&
		$\displaystyle
		p_{0,n,k}
		=
		\frac{(2n+1)!}{(n-k)!(2k+1)!}$
		\\
		\hline
	\end{tabular}
	
\end{table}

\section{\textbf{Relation to existing analytical methods}}
\label{sect.relation.green}
One of the classical analytical approaches to solving linear differential equations is to represent the solution through an integral operator. Green's function theory provides a general framework for constructing such representations. For a linear differential operator $L$, 
\[
L u = f, 
\]
Green's function theory realizes the inverse operator $u = L^{-1}f $ through an integral kernel
\[
u(t) = \int G(t,\tau) f(\tau) d\tau .
\]

To solve Maxwell's equations in this manner, a popular approach is to introduce a scalar potential $\phi$ and a vector potential $A$ to represent electromagnetic fields:
\[
B=\nabla\times A, E=-\nabla \phi - \frac{\partial A}{\partial t}.
\]
Under the Lorenz gauge, the scalar and vector potentials satisfy decoupled wave equations,
\[
\nabla^2\phi - \frac{1}{c^2} \frac{\partial^2\phi}{\partial t^2} = -\frac{\rho}{\epsilon_0}   ,
\]
\[
\nabla^2 A - \frac{1}{c^2} \frac{\partial^2 A}{\partial t^2} = -\mu_0 J ,
\]
 to which Green's functions theory may apply. In this approach, Jefimenko equations express the electromagnetic fields through retarded integral kernels,
\[
E = \frac{1}{4\pi \epsilon_0} \iiint \left( \frac{(\rho-r/c)\vec{n}}{r^2}+\frac{\dot{\rho}(t-r/c)\vec{n}}{cr}-\frac{\dot{J(t-r/c)}}{c^2 r} \right)dV' .
\]

If one seeks an explicit analytical expression that no longer contains integral operators, obtaining the resulting integrals in closed analytical form is often difficult.
On the other hand, given a function, it is relatively easy to find its derivatives of arbitrary orders. That is how the present work tries to exploit.

Rather than constructing the solution through an integral representation, the present work constructs it through successive differentiation. Arbitrary-order derivatives are generated, related through the Time–Space theorem, and assembled into convergent infinite-order expansions.

Moreover, this process naturally introduces new functions whenever they are required by the analytical solution. In the present work, these appear as the deformed sine and cosine functions arising from the extended Taylor expansions.

While causality is made explicit through the retarded integral kernels in the Jefimenko equations, in the present work it is encoded implicitly in the time-analytic expansion of the solution. The Taylor series determines the fields at later times from the initial values and source terms, with the Time–Space theorem relating temporal derivatives to spatial curl operators.

In the Jefimenko equations, the mapping from the source and initial data to the electromagnetic fields is realized through retarded integral operators. In the present work, the same mapping is realized through infinite-order differential operators. The infinite-order curl expansions provide the explicit analytical realization of this operator representation.

The general solution formulas are expressed in terms of infinite-order differential operators acting on the prescribed initial fields or source functions. Once these operators are applied to a specific analytical function, they are evaluated explicitly, yielding closed-form algebraic expressions for the electromagnetic fields. The representative case studies in Section \ref{sect.caseStudies} illustrate this process. By contrast, Green-function and Jefimenko formulations remain expressed in terms of retarded integral operators unless the corresponding integrals can also be evaluated analytically.

\begin{table}[H]
	\centering
	\caption{Comparison of two analytical solutions of Maxwell's equations}
	\label{tbl.green}
	\begin{tabular}{
			>{\raggedright\arraybackslash}p{0.20\textwidth}
			>{\raggedright\arraybackslash}p{0.35\textwidth}
			>{\raggedright\arraybackslash}p{0.35\textwidth}
			}
		\hline
		Aspect
		& Green/Jefimenko approach
		& Present work
		\\
		\hline
		
		Unknowns
		&
		Potentials or fields
		&
		Fields directly
		\\
		\hline

		Solution construction
		&
		Integral operators
		&
		Infinite-order differential operators
		\\
		\hline
		
		Causality
		&
		Retarded integration
		&
		Time-analytic expansion
		\\
		\hline

		Explicit field representation
		&
		Integral representation
		&
		Explicit algebraic expressions after operator evaluation
		\\
		\hline
	\end{tabular}
\end{table}

\begin{table}[H]
	\centering
	\caption{Symbols used in the case studies}
	\label{tbl.symbols}
	\begin{tabular}{l l l}
		\hline
		Symbol & Definition & Description \\ %
		\hline
		$a $ & $a \in \mathbb{R}_{>0}$ & a constant in function $exp(-ar^2)$ \\ %
		\hline
		$g$ & $g=2a$ & a factor in $exp(-ar^2) $ related derivatives \\ %
		\hline
		$r^2 $ & $r^2=x^2+y^2+z^2 $ & space radius \\ %
		\hline
		$w $ & $w=gr^2 $ & a factor appears in $exp(-ar^2) $ related derivatives \\ %
		\hline
		$\vec{z} $ & $\vec{z}=\begin{bmatrix}
			0 \\
			0 \\
			1
		\end{bmatrix} $ & a vector constant \\ %
		\hline
		$\vec{s} $ & $\vec{s}=\begin{bmatrix}
			x \\
			y \\
			z
		\end{bmatrix} $ & a vector variable \\ %
		\hline 
		$\vec{c} $ & $\vec{c}=\begin{bmatrix}
			x \\
			0 \\
			-z
		\end{bmatrix} $ & a vector variable \\ %
		\hline 
		$\vec{f} $ & $\vec{f}=\begin{bmatrix}
			y \\
			-x \\
			0
		\end{bmatrix} $ & a vector variable \\ %
		\hline 
		$\vec{b} $ & $\vec{b}=\begin{bmatrix}
			yz \\
			-2zx \\
			xy
		\end{bmatrix} $ & direction of an initial value \\ %
		\hline 
		$\vec{d} $ & $\vec{d}=g(x^2-z^2)\vec{s} $ & a vector forming the initial value solution \\ %
		\hline
		$\vec{h} $ & $\vec{h}=g\vec{f} $ & a vector forming the source-driven solutions \\ %
		\hline
		$\vec{e} $ & $\vec{e}=gz\vec{s} $ & a vector forming the source-driven solutions \\ %
		\hline
	\end{tabular}
	
\end{table}

The two approaches are complementary rather than competing. Under the assumptions for which uniqueness holds, both approaches represent the same physical solution for the same initial conditions and source terms, although they construct that solution through fundamentally different mathematical representations.

Establishing an explicit correspondence between the integral-operator formulation and the differential-operator formulation developed in this work would be an interesting direction for future research. Such a correspondence could provide additional insight into the relationship between these two representations and, for example, help clarify the role of the Lorenz gauge, which does not appear explicitly in the present formulation.

Table \ref{tbl.green} lists some aspects of these approaches.

\section{\textbf{Representative case studies}}
\label{sect.caseStudies}

The general solution formulas developed in the preceding sections yield explicit analytical field expressions by evaluating the infinite-order differential operators for prescribed initial fields and source functions. The following representative initial-value and source-driven problems illustrate this process and demonstrate the resulting explicit algebraic field expressions. 

The representative case studies also demonstrate that explicit analytical field expressions facilitate the identification of structural properties of the electromagnetic fields, such as geometric relationships and propagation characteristics, that are considerably more difficult to infer from discrete simulation data alone.

The symbols used in these case studies are defined in Table \ref{tbl.symbols}.

Numerical results and comparisons with the FDTD method are presented separately in Section \ref{sect.numeric}.

\subsection{\textbf{Case-study 1:Gaussian initial-value}} 
The following initial values are used.
\begin{align}
	\label{3d.exa.ini.H}
	F_h(x,y,z) &= 0
\\
	\label{3d.exa.ini.E}
	F_e(x,y,z) &= \exp(-ar^2) \vec{b}
\end{align}
By applying (\ref{curl.exp.a.b.2n}) and (\ref{curl.exp.a.b.2n1}) to (\ref{3d.exa.ini.E}) and substituting the results into (\ref{3d.solIniH}) and (\ref{3d.solIniE}), we obtain the following closed-form analytical solution:
\begin{equation*}
	\label{3d.exa.sol.ini.HE}
	\begin{split}
		H(x,y,z,\theta) &= \\
		&-\frac{1}{\eta \sqrt{a}}\exp({-ar^2})  \mathrm {sinp} (\xi,\sigma)_{3,0,0} \vec{d} \\
		&+ \frac{1}{\eta \sqrt{a}}\exp({-ar^2})\left(\frac{1}{2} \mathrm {sinp} (\xi,\sigma)_{1,0,1}+2\mathrm {sinp} (\xi,\sigma)_{2,0,0}  \right)\vec{c} \\
		E(x,y,z,\theta) &= \exp({-ar^2}) \mathrm {cosp} (\xi,\sigma)_{2,0,0} \vec{b}
	\end{split}
\end{equation*}
where $\xi=\sqrt{a}\theta, \sigma=2\sqrt{a}r $.

From the analytical solution, the following characteristics of the electromagnetic fields can be observed:
\begin{itemize}
	\item The direction of the electric field is $\vec{b} $, which remains constant over time.
	\item The magnetic field is confined to a two-dimensional plane spanned by the vectors $\vec{c} $ and $\vec{s} $, as $\vec{d} = g(x^2-z^2)\vec{s} $.
	\item Since $\vec{c} \cdot \vec{b} =0 $ and $\vec{s} \cdot \vec{b} = 0 $, it follows that $H \cdot E = 0 $. Thus, the electric and magnetic fields are everywhere orthogonal.
	\item For energy transfer, using
	\begin{align*}
		\vec{b} \times \vec{c} &= (x^2+y^2)\vec{s} - (x^2-z^2)\vec{c},
		\\
		\vec{b} \times \vec{s} &= (x^2-z^2)\vec{s} - r^2 \vec{c} ,
	\end{align*}
	it follows that the Poynting vector is also confined to the plane spanned by $\vec{c} $ and $\vec{s} $.
	\item Since an analytical expression for the Poynting vector is available, the characteristics of energy transfer can be examined analytically in a systematic manner.
\end{itemize}

\subsection{\textbf{Case-study 2: Gaussian source}}
The following source is used.
\begin{equation}
	\label{3d.exa.src}
	J(x,y,z,\theta) = \exp({-ar^2})\vec{z}
\end{equation}
(\ref{3d.exa.src}) gives
\begin{equation}
	\label{3d.exa.src.time}
	\frac{\partial^n J(x,y,z,\theta)}{\partial \theta^n} = 0; \forall n>0 .
\end{equation}
By applying (\ref{curl.exp.a.z.2n1}) and (\ref{curl.exp.a.z.2n2}) to (\ref{3d.exa.src}) and substituting the results and (\ref{3d.exa.src.time}) into (\ref{3d.solSrcH}) and (\ref{3d.solSrcE}), we obtain the following closed-form analytical solution.
\begin{align}
	\label{3d.exa.sol.src.H}
	H(x,y,z,\theta) &= -\frac{1}{a} \exp({-ar^2}) \mathrm{cosp}(\xi,\sigma)_{1,1,0} \vec{h}
\\
	\label{3d.exa.sol.src.E}
	\begin{split}
	E(x,y,z,\theta) &= \frac{2\eta}{\sqrt{a}}\exp(-ar^2)\mathrm{sinp}(\xi,\sigma)_{2,1,0}\vec{e} \\
	&- \frac{\eta}{\sqrt{a}}\exp(-ar^2) \left( \xi - 4\mathrm{sinp}(\xi,\sigma)_{1,1,0} + \sigma^2\mathrm{sinp}(\xi,\sigma)_{2,1,0} \right) \vec{z}
	\end{split}
\end{align}
where $\xi=\sqrt{a}\theta, \sigma=2\sqrt{a}r $.

From the analytical solution, the following characteristics of the electromagnetic fields can be observed:
\begin{itemize}
	\item The direction of the magnetic field is $\vec{f} $, as $\vec{h} = g\vec{f} $, which remains constant over time.
	\item The electric field is confined to a two-dimensional plane spanned by the vectors $\vec{z} $ and $\vec{s} $, as $\vec{e} = gz\vec{s} $.
	\item Since $\vec{z} \cdot \vec{f} =0 $ and $\vec{s} \cdot \vec{f} = 0 $, it follows that $H \cdot E = 0 $. Thus, the electric and magnetic fields are everywhere orthogonal.
	\item For energy transfer, using
	\begin{align*}
		\vec{f} \times \vec{z} &= -\vec{s} + z\vec{z} ,
		\\
		\vec{f} \times \vec{s} &= -z\vec{s} + r^2 \vec{z} ,
	\end{align*}
	it follows that the Poynting vector is also confined to the plane spanned by $\vec{s} $ and $\vec{z} $.
	\item Since an analytical expression for the Poynting vector is available, the characteristics of energy transfer can be examined analytically in a systematic manner.
\end{itemize}

\subsection{\textbf{Case-study 3: harmonic source}}
The following harmonic source \cite{fdtd:Schneider} is used.
\begin{equation}
	\label{3d.exa.src.cos}
	J(x,y,z,\theta) = \exp({-ar^2})\cos(\omega \theta) \vec{z}
\end{equation}
where $a, \omega \in \mathbb{R}_{>0}$ are constant.

The temporary derivatives of the source are 
\begin{equation}
	\label{case3.time.even}
	\frac{\partial ^{2n} \cos(\omega \theta)}{\partial \theta^{2n}} = (-1)^n \omega^{2n} \cos(\omega \theta)
\end{equation}
\begin{equation}
	\label{case3.time.odd}
	\frac{\partial ^{2n+1} \cos(\omega \theta)}{\partial \theta^{2n+1}} = (-1)^{n+1} \omega^{2n+1} \sin(\omega \theta)
\end{equation}
By applying (\ref{case3.time.even}), (\ref{case3.time.odd}), (\ref{curl.exp.a.z.2n1}) and (\ref{curl.exp.a.z.2n2}) to (\ref{3d.exa.src.cos}) and substituting the results into (\ref{3d.solSrcH}) and (\ref{3d.solSrcE}), we obtain the following closed-form analytical solution:
\begin{align}
	\label{3d.exa.sol.src.cos.H}
	H(x,y,z,\theta) &= -\frac{1}{a} \exp({-ar^2}) \mathrm{cosp}(\xi,\sigma, \omega_a)_{1,1} \vec{h}
\\
	\label{3d.exa.sol.src.cos.E}
	\begin{split}
		E(x,y,z,\theta) &= \frac{2\eta}{\sqrt{a}}\exp(-ar^2)\mathrm{sinp}(\xi,\sigma,\omega_a)_{2,1}\vec{e} \\
		&- \frac{\eta}{\sqrt{a}}\exp(-ar^2) f_z(\xi,\sigma,\omega_a) \vec{z} \\
		f_z(\xi,\sigma,\omega_a) &= \frac{\sin(\omega_a\xi)}{\omega_a} - 4\mathrm{sinp}(\xi,\sigma,\omega_a)_{1,1} + \sigma^2\mathrm{sinp}(\xi,\sigma,\omega_a)_{2,1}
	\end{split}
\end{align}
where $\xi=\sqrt{a}\theta, \sigma=2\sqrt{a}r, \omega_a=\omega/\sqrt{a} $.

Similar analytical insights can be obtained from the above solution formulas as in Case Study 2.

\subsection{\textbf{Case-study 4: Ricker wavelet source}}
The Ricker wavelet source \cite{fdtd:Schneider} is given by 
\begin{equation*}
	f_r(t) = (1-2(\pi f_p (t-d_r)^2))\exp(-(\pi f_p(t-d_r))^2)
\end{equation*}
where $f_p $ is the "peak frequency" and $d_r $ is the temporal delay.

Let
\begin{equation*}
	\begin{split}
		\theta &= ct \\
		\theta_d &= cd_r \\
		b &= \left(\frac{\pi f_p}{c} \right)^2
	\end{split}
\end{equation*}
The source term becomes
\begin{equation}
	\label{3d.exa.src.ricker}
	J(x,y,z,\theta) = \exp({-ax^2}) (1-2b(\theta-\theta_d)^2 )\exp(-b(\theta-\theta_d)^2) \vec{z}
\end{equation}
where $a, b, \theta_d \in \mathbb{R}_{>0}$ are three constants.

Notice that (\ref{3d.exa.src.ricker}) can be written as 
\begin{equation}
	\label{3d.exa.src.ricker.2}
	J(x,y,z,\theta) =-\frac{1}{2b} \exp({-ax^2}) \frac{\partial^2 \exp(-b(\theta-\theta_d)^2)}{\partial \theta^2}  \vec{z}
\end{equation}

Apply formulas (A.9) and (A.10) in \cite{GE2026100688} to (\ref{3d.exa.src.ricker.2}), we have the temporal derivatives of the source:
\begin{align}
	\label{ricker.temp.2n}
	\begin{split}
	&\frac{\partial ^{2n} J(x,y,z,0)}{\partial \theta^{2n}} \\
	&= \frac{1}{2} \exp(-b{\theta_d}^2)(-1)^nb^n\sum_{k=0}^{n+1}(-1)^kq_{0,n+1,k}(2\sqrt{b}\theta_d)^{2k}\exp(-ar^2) \vec{z}  
	\end{split}
	\\
	\label{ricker.temp.2n1}
	\begin{split}
	&\frac{\partial ^{2n+1} J(x,y,z,0)}{\partial \theta^{2n+1}} \\
	&= \frac{1}{2} \exp(-b{\theta_d}^2)(-1)^{n+1}{\sqrt{b}}^{2n+1}\sum_{k=0}^{n+1}(-1)^kp_{0,n+1,k}(2\sqrt{b}\theta_d)^{2k+1}\exp(-ar^2) \vec{z}  
	\end{split}
\end{align}

By applying (\ref{ricker.temp.2n}), (\ref{ricker.temp.2n1}), (\ref{curl.exp.a.z.2n1}) and (\ref{curl.exp.a.z.2n2}) to (\ref{3d.exa.src.ricker.2}) and substituting the results into (\ref{3d.solSrcH}) and (\ref{3d.solSrcE}), we obtain the following closed-form analytical solution:
\begin{align}
	\label{3d.exa.sol.src.ricker.H}
	H(x,y,z,\theta) &= \exp({-ar^2-b{\theta_d}^2}) \mathrm{f_h}(\xi, \sigma, \xi_d, a_b) \vec{h}
\\
	\label{3d.exa.sol.src.ricker.E}
	E(x,y,z,\theta) &= \frac{\eta}{\sqrt{b}} \exp({-ar^2-b{\theta_d}^2}) (\mathrm{f_z}(\xi,\sigma, \xi_d, a_b)\vec{z}+\mathrm{f_e}(\xi,\sigma, \xi_d, a_b)\vec{e}) 
\end{align}
where $\xi=\sqrt{b}\theta, \xi_d = \sqrt{b}\theta_d, \sigma=2\sqrt{a}r, a_b=a/b $. Functions $\mathrm{f_h}, \mathrm{f_z} $ and $\mathrm{f_e} $ are given by
\begin{align}
	\label{ricker.src.fh}
	\mathrm{f_h}(\xi,\sigma, \phi, \omega) &= \frac{1}{2b} \left( \mathrm{sinpo}(\xi,\sigma,\phi, \omega)_{1,1} - \mathrm{cospe}(\xi,\sigma,\phi, \omega)_{1,1} \right)
\\
	\label{ricker.src.fe}
	\mathrm{f_e}(\xi,\sigma, \phi, \omega) &= \frac{a}{b} \left(\mathrm{sinpe}(\xi,\sigma,\phi, \omega)_{2,1} - \mathrm{cospo}(\xi,\sigma,\phi, \omega)_{2,2} \right)
\\
	\label{ricker.src.fz}
	\begin{split}
		\mathrm{f_z}(\xi,\sigma, \phi, \omega) &= \frac{1}{2} \mathrm{f_\xi}(\xi, \phi)  \\
		&+ \frac{a}{2b}\sigma^2 \mathrm{cospo}(\xi,\sigma,\phi,\omega)_{2,2} - \frac{2a}{b} \mathrm{cospo}(\xi,\sigma,\phi,\omega)_{1,2} \\
		&+ \frac{2a}{b} \mathrm{sinpe}(\xi,\sigma,\phi,\omega)_{1,1} - \frac{a}{2b} \sigma^2 \mathrm{sinpe}(\xi,\sigma,\phi,\omega)_{2,1}
	\end{split}
\end{align}
where
\begin{equation}
	\label{ricker.src.fxi}
	\begin{split}
		\mathrm{f_\xi}(\xi, \phi) &= 2 \phi (3-2\phi^2)\xi^2 - 2(1-2\phi^2)\xi \\
		&-2\phi \mathrm{cosp}(\xi,2\phi)_{0,2,2} + \mathrm{sinq}(\xi,2\phi)_{0,1,2}
	\end{split}
\end{equation}

Similar analytical insights can be obtained from the above solution formulas, as demonstrated in Case Study 2.

\section{\textbf{Numerical results}}
\label{sect.numeric}
\subsection{Numerical evaluation of analytical solutions}

Unlike the standard trigonometric functions, the deformed sine and cosine functions are non-periodic, which makes their numerical evaluation increasingly demanding for large spatial and temporal values. 
For the parameter ranges considered in this paper, neither double-precision arithmetic nor 500-digit precision produced numerically stable results because intermediate terms became extremely large during the evaluation of the truncated series. Calculations were therefore performed using arbitrary-precision arithmetic with 5000 decimal digits. To verify that this precision was sufficient, the computations were repeated using 5500 decimal digits, and no observable differences were found within the adopted working precision.

The infinite series were truncated after 600 terms. Under these settings, the analytical solutions exhibited stable convergence throughout the numerical examples presented in this paper. These parameter choices serve as a reliable reference implementation rather than an optimized computational strategy. The efficient numerical evaluation of the non-periodic functions introduced in this work remains an interesting topic for future investigation.

Electromagnetic field snapshots are presented within an 8-meter cube:
\begin{equation}
	\label{num.domain}
	\begin{split}
	&H(i\bigtriangleup_s,j\bigtriangleup_s,k\bigtriangleup_s,q\bigtriangleup_{\theta}) \\
	&E(i\bigtriangleup_s,j\bigtriangleup_s,k\bigtriangleup_s,q\bigtriangleup_{\theta}) \\
	&i,j,k = 0,\pm 1,\pm 2,...,\pm 10 \\
	&q = 0,1,2,...,50	
	\end{split}
\end{equation}

3D drawing of the fields are presented in section \ref{sect.3d}.

\subsection{Comparisons with FDTD data}
\label{sect.fdtd.verify}
For the calculations presented in this section, the FDTD computational domains are chosen sufficiently large that the field data within the central 8-meter cube defined by \eqref{num.domain} remain unaffected by boundary effects. Data outside this region are discarded.

Because of the staggered-grid structure of the FDTD algorithm, the six electromagnetic field components are not defined at identical spatial locations. Direct comparisons therefore require separate analytical evaluations at the corresponding Yee-grid locations of each field component.

The FDTD results are numerical approximations whose accuracy depends on parameters such as the spatial step size, time step, Courant number, computational domain, and boundary treatment. Consequently, pointwise differences between the analytical and FDTD results cannot, by themselves, establish or invalidate the mathematical correctness of the analytical solutions. The comparisons below therefore focus primarily on whether the FDTD results exhibit field patterns that are visually consistent with those obtained from the corresponding analytical expressions.

The comparisons are presented from two complementary perspectives. We first zoom in on the $E_x$ component along a prescribed line defined by \eqref{fdtd.Ex} to examine detailed data patterns. We then zoom out to the three-dimensional electromagnetic fields to compare their overall spatial structures. This broader view complements the preceding line-based comparison by illustrating the overall spatial structure of the analytical and FDTD fields.

\begin{equation}
	\label{fdtd.Ex}
	\begin{split}
		&E_x(s \bigtriangleup_s + \bigtriangleup_s/2,5\bigtriangleup_s,5\bigtriangleup_s, q\bigtriangleup_\theta) \\	
		&s=0,1,2,...,10 \\
		&q=0,1,2,...,50
	\end{split}
\end{equation}

\subsection{Zoom-in comparison: Ex patterns}

\begin{figure}[H]
	\begin{subfigure}{.5\textwidth}
		\includegraphics[width=.9\linewidth]{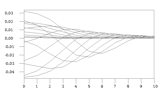}
		\caption{$Ex $ by analytical solution}
		\label{fig:exa.1.Ex.analytic}
	\end{subfigure}%
	\begin{subfigure}{.5\textwidth}
		\includegraphics[width=.9\linewidth]{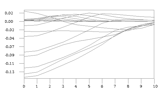}
		\caption{$Ex $ by FDTD}
		\label{fig:exa.1.Ex.fdtd}
	\end{subfigure}
	\caption{$E_x $ by the analytical solution and by FDTD simulation (case study 1).}
	\label{fig:exa.1.Ex}
\end{figure}
\textbf{Case 1}. Figure \ref{fig:exa.1.Ex} compares the $E_x$ component obtained from the analytical solution with that obtained from the FDTD simulation. Figure \ref{fig:exa.1.Ex.analytic} presents snapshots of $E_x(x+\bigtriangleup_s/2,y,z,\theta)$ evaluated using the analytical formulas, whereas Figure \ref{fig:exa.1.Ex.fdtd} shows the corresponding FDTD results. The two sets of snapshots exhibit similar overall field structures, although noticeable quantitative differences remain.

\begin{figure}[H]
	\begin{subfigure}{.5\textwidth}
		\includegraphics[width=.9\linewidth]{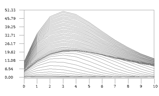}
		\caption{$Ex $ by analytical solution}
		\label{fig:exa.2.Ex.analytic}
	\end{subfigure}%
	\begin{subfigure}{.5\textwidth}
		\includegraphics[width=.9\linewidth]{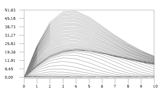}
		\caption{$Ex $ by FDTD}
		\label{fig:exa.2.Ex.fdtd}
	\end{subfigure}
	\caption{$E_x $ by the analytical solution and by FDTD simulation (case study 2).}
	\label{fig:exa.2.Ex}
\end{figure}
\textbf{Case 2}. Figure \ref{fig:exa.2.Ex.analytic} presents snapshots of $E_x(x+\bigtriangleup_s/2,y,z,\theta)$ evaluated using the analytical formulas, whereas Figure \ref{fig:exa.2.Ex.fdtd} shows the corresponding FDTD results. The two sets of snapshots exhibit closely similar spatial patterns, with only relatively small visible differences.

\begin{figure}[H]
	\begin{subfigure}{.5\textwidth}
		\includegraphics[width=.9\linewidth]{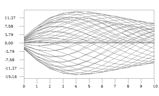}
		\caption{$Ex $ by analytical solution}
		\label{fig:exa.3.Ex.analytic}
	\end{subfigure}%
	\begin{subfigure}{.5\textwidth}
		\includegraphics[width=.9\linewidth]{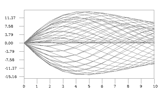}
		\caption{$Ex $ by FDTD}
		\label{fig:exa.3.Ex.fdtd}
	\end{subfigure}
	\caption{$E_x $ by the analytical solution and by FDTD simulation (case study 3).}
	\label{fig:exa.3.Ex}
\end{figure}
\textbf{Case 3}. Figure \ref{fig:exa.3.Ex.analytic} presents snapshots of $E_x(x+\bigtriangleup_s/2,y,z,\theta)$ evaluated using the analytical formulas, whereas Figure \ref{fig:exa.3.Ex.fdtd} shows the corresponding FDTD results. The analytical and FDTD snapshots are visually nearly indistinguishable at the scale shown.

\begin{figure}[H]
	\begin{subfigure}{.5\textwidth}
		\includegraphics[width=.9\linewidth]{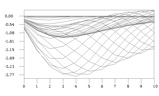}
		\caption{$Ex $ by analytical solution}
		\label{fig:exa.4.Ex.analytic}
	\end{subfigure}%
	\begin{subfigure}{.5\textwidth}
		\includegraphics[width=.9\linewidth]{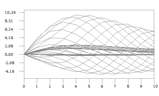}
		\caption{$Ex $ by FDTD}
		\label{fig:exa.4.Ex.fdtd}
	\end{subfigure}
	\caption{$E_x $ by the analytical solution and by FDTD simulation (case study 4).}
	\label{fig:exa.4.Ex}
\end{figure}
\textbf{Case 4}. Figure \ref{fig:exa.4.Ex.analytic} presents snapshots of $E_x(x+\bigtriangleup_s/2,y,z,\theta)$ evaluated using the analytical formulas, whereas Figure \ref{fig:exa.4.Ex.fdtd} shows the corresponding FDTD results. Substantial quantitative differences are visible between the two sets of results. Nevertheless, the snapshots retain similarities in their overall spatial patterns. These similarities are also apparent when the Case 4 results are viewed together with the patterns obtained in the preceding three cases.

\subsection{Zoom-out comparison: three-dimensional field patterns}
\label{sect.3d}

In the figures, colored three-dimensional grids indicate coordinate planes: (x,y), (y,z), and (z,x) are shown in red, blue, and yellow, respectively. Vectors are depicted as straight line segments in three dimensions, with lengths proportional to their magnitudes. A consistent length scale is maintained within each figure group.

\subsubsection{3D data patterns: Case 1}

\begin{figure}[H]
	\begin{subfigure}{.33\textwidth}
		\includegraphics[width=.82\linewidth]{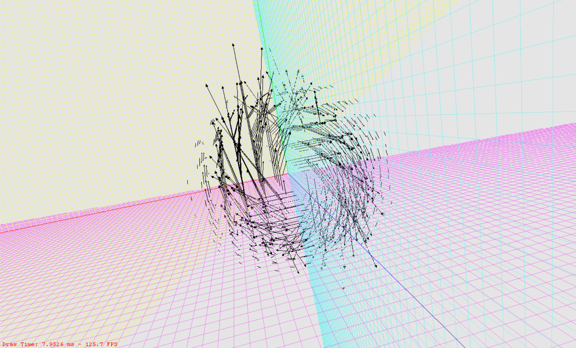}
		\caption{$E$ at $\theta=0.23$}
	\end{subfigure}%
	\begin{subfigure}{.33\textwidth}
		\includegraphics[width=.82\linewidth]{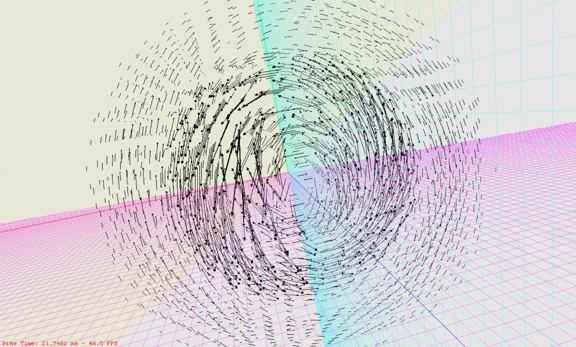}
		\caption{$E$ at $\theta=2.3$}
	\end{subfigure}%
	\begin{subfigure}{.33\textwidth}
		\includegraphics[width=.82\linewidth]{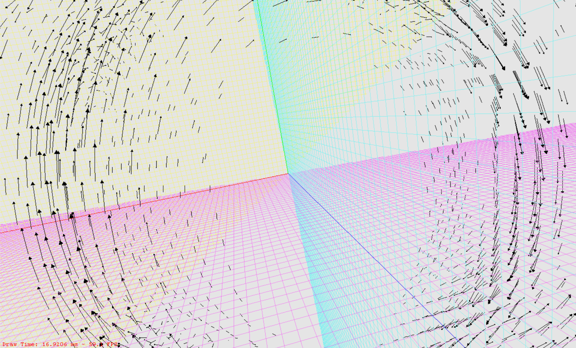}
		\caption{$E$ at $\theta=5.75$}
	\end{subfigure}
	\caption{Electric field by analytical solution. $q=1,10,25$ (Case-study 1).}
	\label{fig:exa.1a.E}
\end{figure}
\begin{figure}[H]
	\begin{subfigure}{.33\textwidth}
		\includegraphics[width=.82\linewidth]{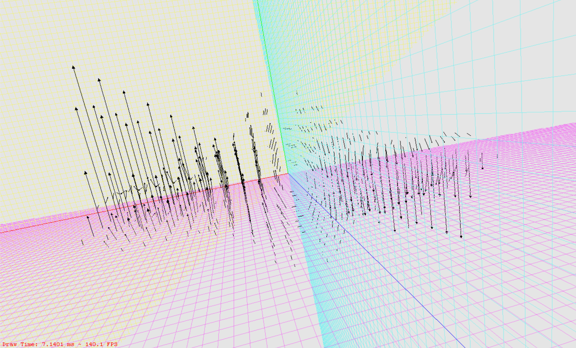}
		\caption{$E$ at $\theta=0.23$}
	\end{subfigure}%
	\begin{subfigure}{.33\textwidth}
		\includegraphics[width=.82\linewidth]{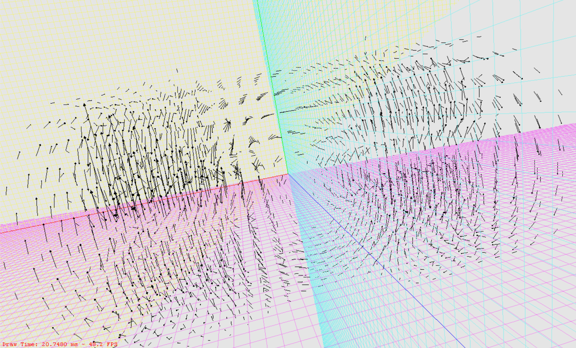}
		\caption{$E$ at $\theta=2.3$}
	\end{subfigure}%
	\begin{subfigure}{.33\textwidth}
		\includegraphics[width=.82\linewidth]{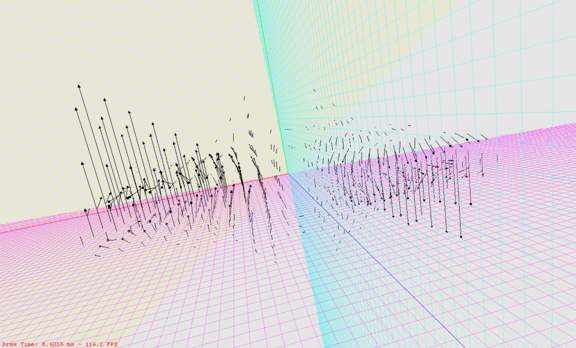}
		\caption{$E$ at $\theta=5.75$}
	\end{subfigure}
	\caption{Electric field by FDTD. $q=1,10,25$ (Case-study 1).}
	\label{fig:exa.1a.fdtd.E}
\end{figure}	
Figure \ref{fig:exa.1a.E} shows 3 snapshots of the electric field obtained from the analytical solution, while Figure \ref{fig:exa.1a.H}  presents the corresponding magnetic field snapshots. Figures \ref{fig:exa.1a.fdtd.E} and \ref{fig:exa.1a.fdtd.H} show the FDTD simulation results, with vector components visualized at coincident spatial locations for comparison.
\begin{figure}[H]
	\begin{subfigure}{.33\textwidth}
		\includegraphics[width=.82\linewidth]{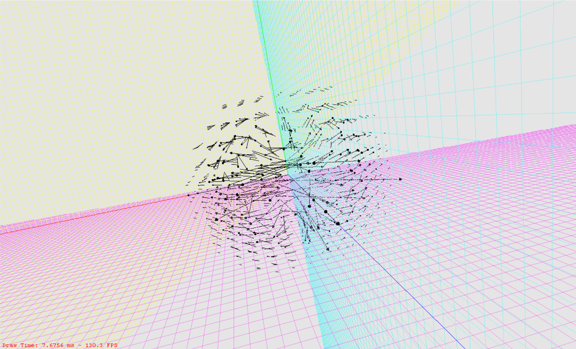}
		\caption{$H$ at $\theta=0.23$}
	\end{subfigure}%
	\begin{subfigure}{.33\textwidth}
		\includegraphics[width=.82\linewidth]{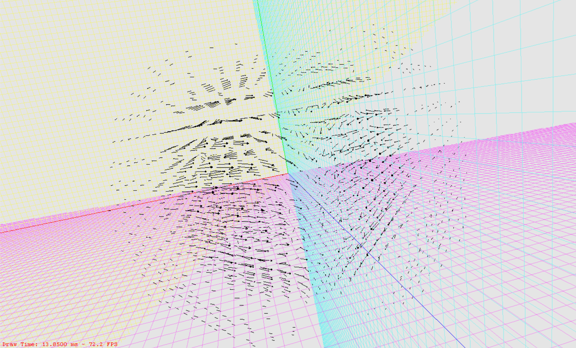}
		\caption{$H$ at $\theta=2.3$}
	\end{subfigure}%
	\begin{subfigure}{.33\textwidth}
		\includegraphics[width=.82\linewidth]{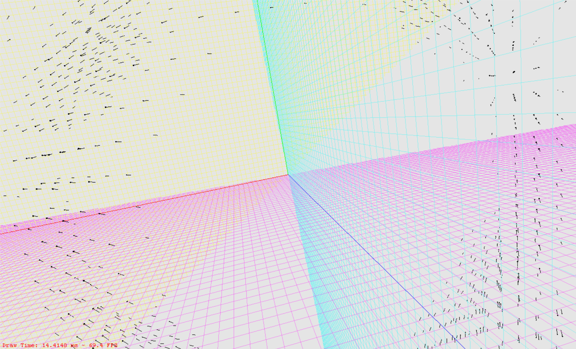}
		\caption{$H$ at $\theta=5.75$}
	\end{subfigure}
	\caption{Magnetic field by analytical solution. $q=1,10,25$ (Case-study 1).}
	\label{fig:exa.1a.H}
\end{figure}
\begin{figure}[H]
	\begin{subfigure}{.33\textwidth}
		\includegraphics[width=.82\linewidth]{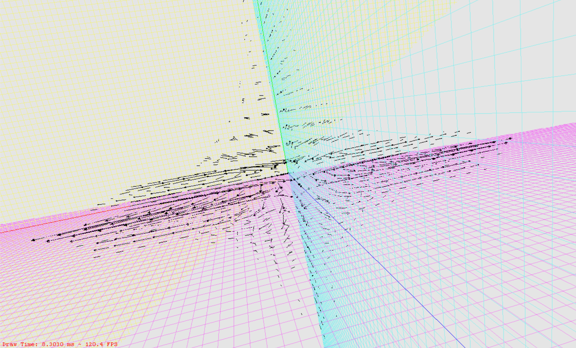}
		\caption{$H$ at $\theta=0.23$}
	\end{subfigure}%
	\begin{subfigure}{.33\textwidth}
		\includegraphics[width=.82\linewidth]{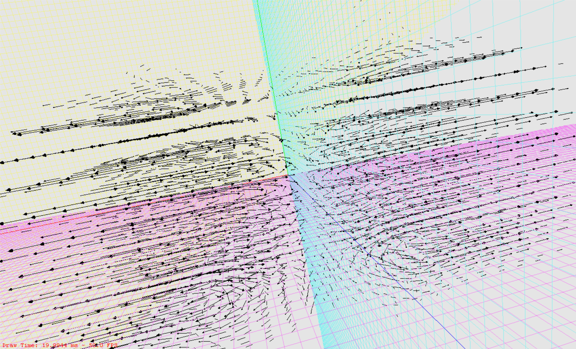}
		\caption{$H$ at $\theta=2.3$}
	\end{subfigure}%
	\begin{subfigure}{.33\textwidth}
		\includegraphics[width=.82\linewidth]{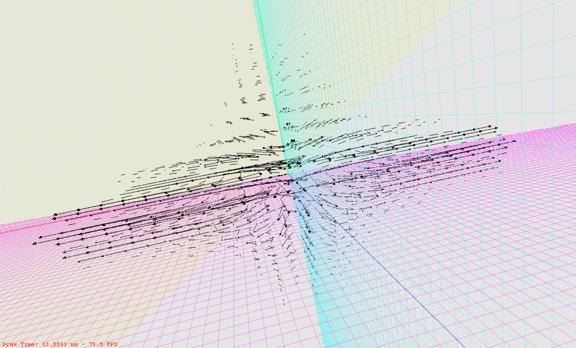}
		\caption{$H$ at $\theta=5.75$}
	\end{subfigure}
	\caption{Magnetic field by FDTD. $q=1,10,25$ (Case-study 1).}
	\label{fig:exa.1a.fdtd.H}	
\end{figure}
These three-dimensional visualizations show significant discrepancies between the analytical and FDTD results for this case. 
The analytical field exhibits the expected spatial symmetry, whereas the FDTD field shows a noticeable asymmetry. 
To investigate the origin of this discrepancy, we examined the initialization of the electric field on the staggered Yee grid, as shown by \eqref{gsini.stag}. The analysis indicates that the observed asymmetry originates from the staggered placement of the field components in the prescribed initial data.
\begin{equation}
	\label{gsini.stag}
	F_e(x,y,z) = \begin{bmatrix}
		yz \cdot exp(-a ((x+\bigtriangleup_s/2)^2+y^2 + z^2))\\
		-2zx \cdot exp(-a (x^2 + (y+\bigtriangleup_s/2)^2 + z^2)) \\
		xy \cdot exp(-a (x^2+y^2+(z+\bigtriangleup_s/2)^2))
	\end{bmatrix} .
\end{equation}
To isolate the effect of the staggered initialization, we also consider the corresponding non-staggered initial field:
\begin{equation}
	\label{gsini.nonstag}
	F_e(x,y,z) = exp(-a (x^2+y^2+z^2))\begin{bmatrix}
		yz \\
		-2zx  \\
		xy 
	\end{bmatrix} .
\end{equation}

Figures \ref{fig:exa.1.ini.stag} and \ref{fig:exa.1.ini.nonstag} show the initial values on staggered and non-staggered grids, respectively. 

\begin{figure}[H]
	\begin{subfigure}{.5\textwidth}
		\includegraphics[width=.9\linewidth]{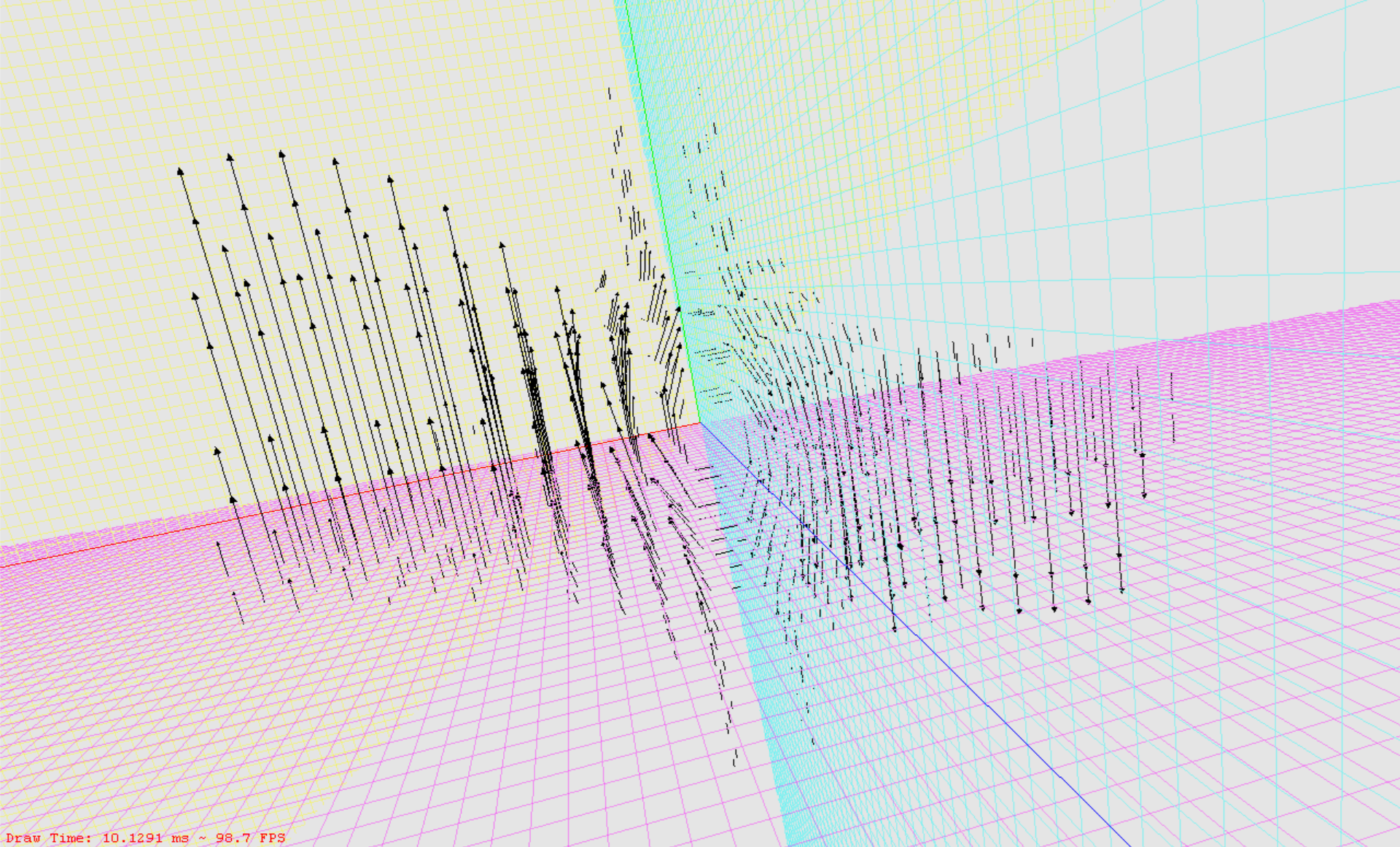}
		\caption{Initial values on Yee grid}
		\label{fig:exa.1.ini.stag}
	\end{subfigure}%
	\begin{subfigure}{.5\textwidth}
		\includegraphics[width=.9\linewidth]{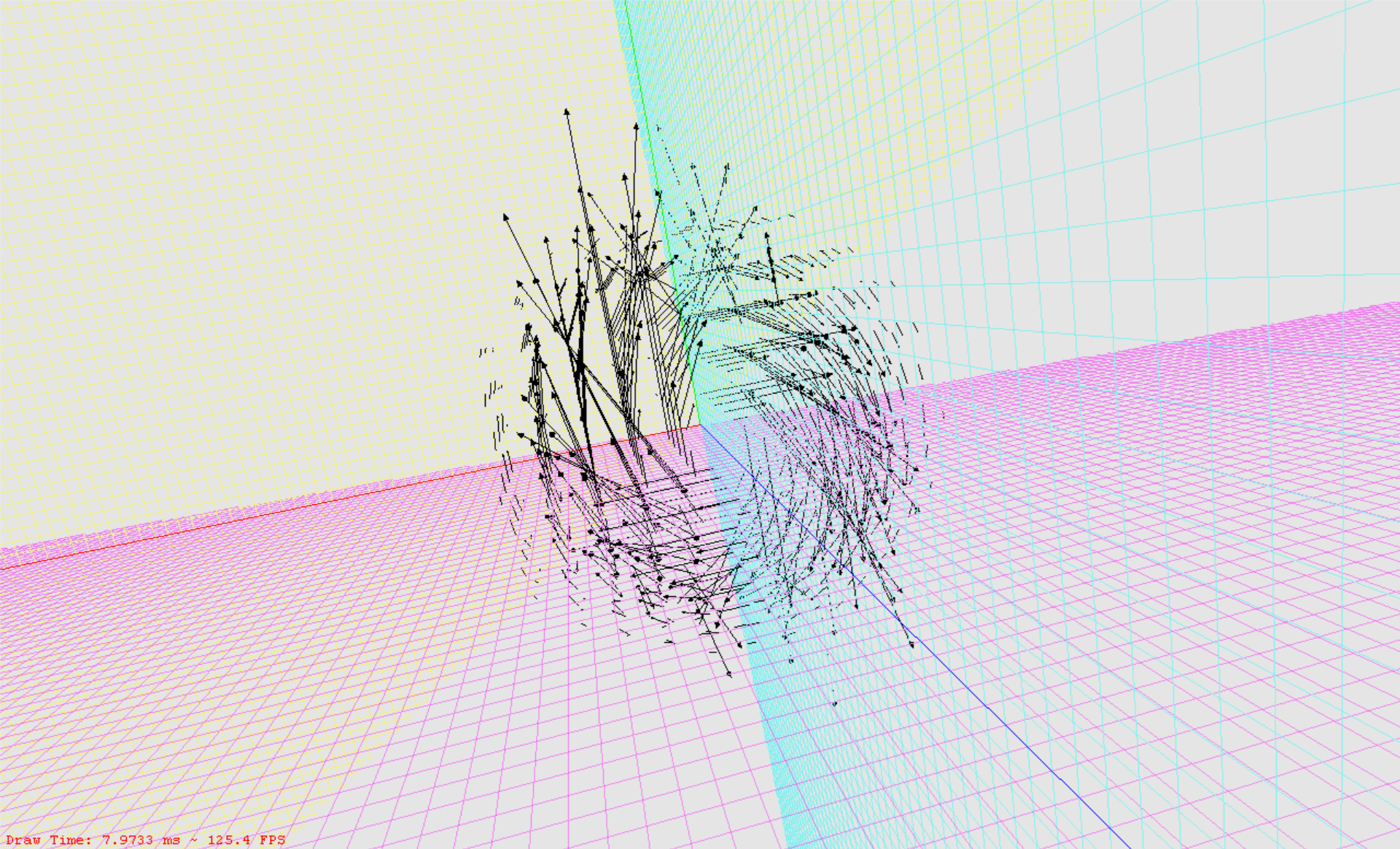}
		\caption{Initial values on non-staggered grid}
		\label{fig:exa.1.ini.nonstag}
	\end{subfigure}
	\caption{Initial values on staggered and non-staggered grid (case study 1).}
	\label{fig:exa.1.ini}
\end{figure}

To isolate the influence of the staggered initialization, we also performed an FDTD simulation using the non-staggered initial values \eqref{gsini.nonstag}. The resulting three-dimensional field distributions are presented in Figures \ref{fig:exa.2a.fdtd.E.nonstag} and \ref{fig:exa.2a.fdtd.H.nonstag}, showing good agreement with the analytical results.

\begin{figure}[H]
	\begin{subfigure}{.33\textwidth}
		\includegraphics[width=.82\linewidth]{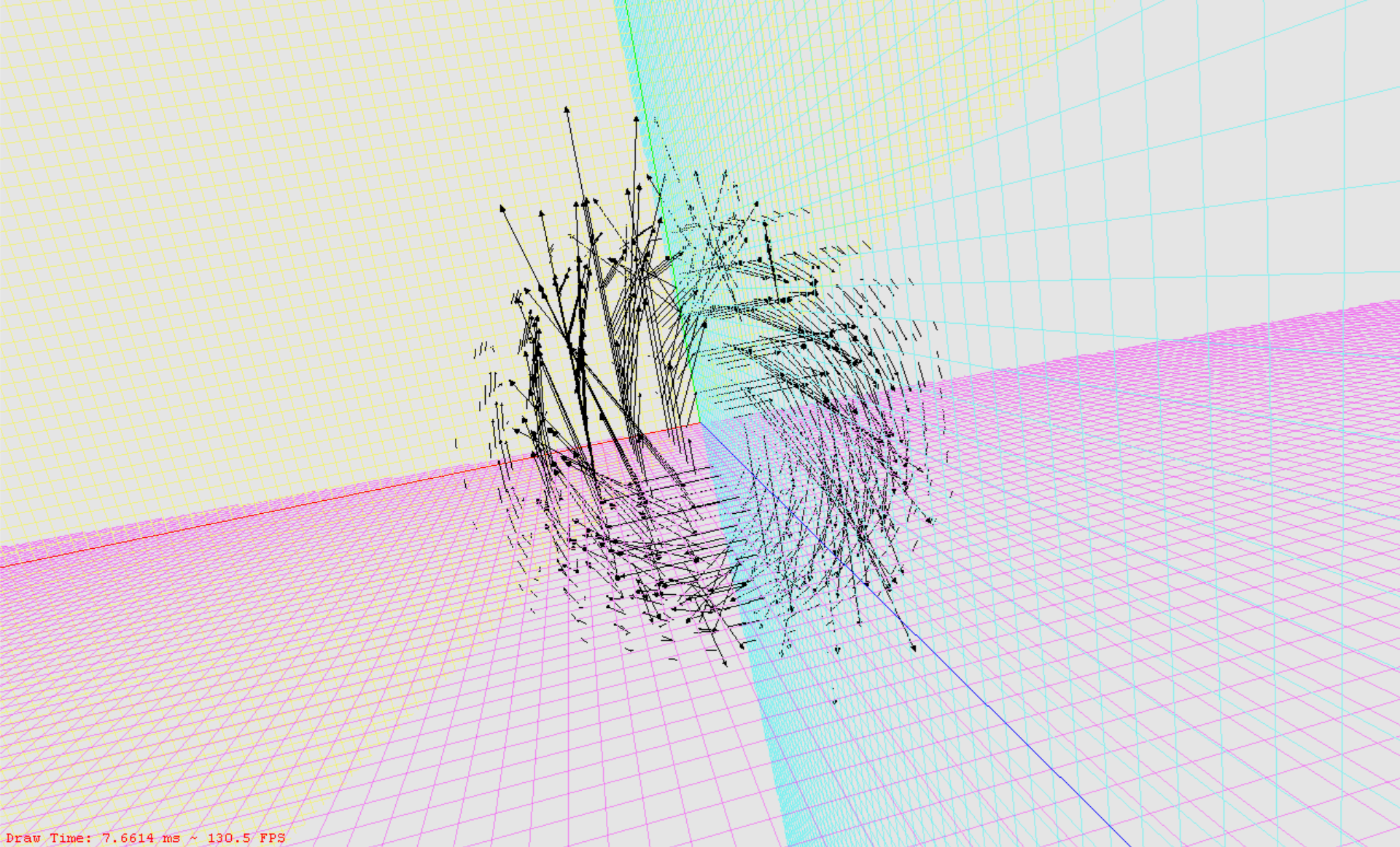}
		\caption{$E$ at $\theta=0.23$}
	\end{subfigure}%
	\begin{subfigure}{.33\textwidth}
		\includegraphics[width=.82\linewidth]{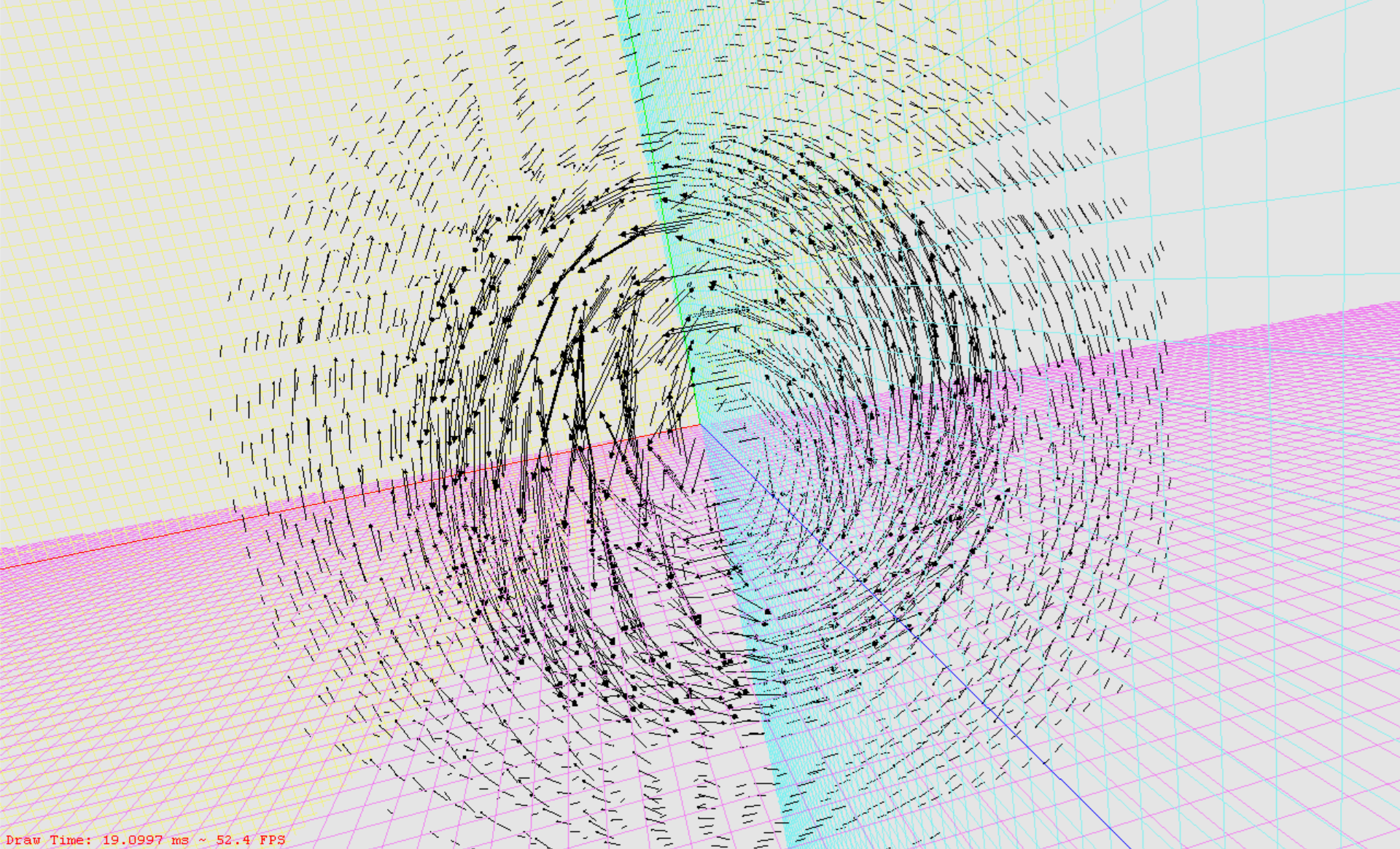}
		\caption{$E$ at $\theta=2.3$}
	\end{subfigure}%
	\begin{subfigure}{.33\textwidth}
		\includegraphics[width=.82\linewidth]{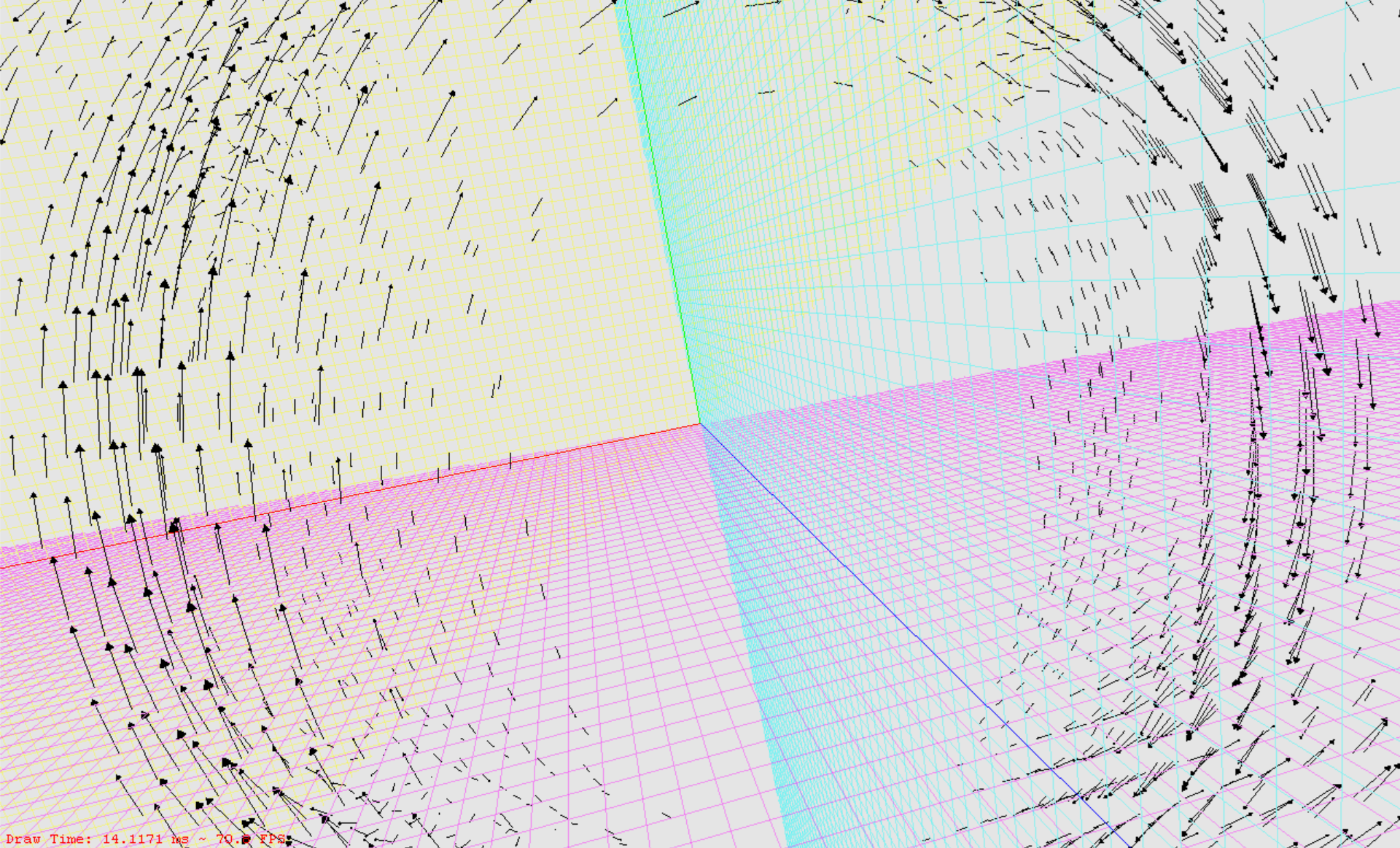}
		\caption{$E$ at $\theta=5.75$}
	\end{subfigure}
	\caption{FDTD:Electric field of non-staggered grid. $q=1,10,25$ (Case-study 1).}
	\label{fig:exa.2a.fdtd.E.nonstag}
\end{figure}	
\begin{figure}[H]
	\begin{subfigure}{.33\textwidth}
		\includegraphics[width=.82\linewidth]{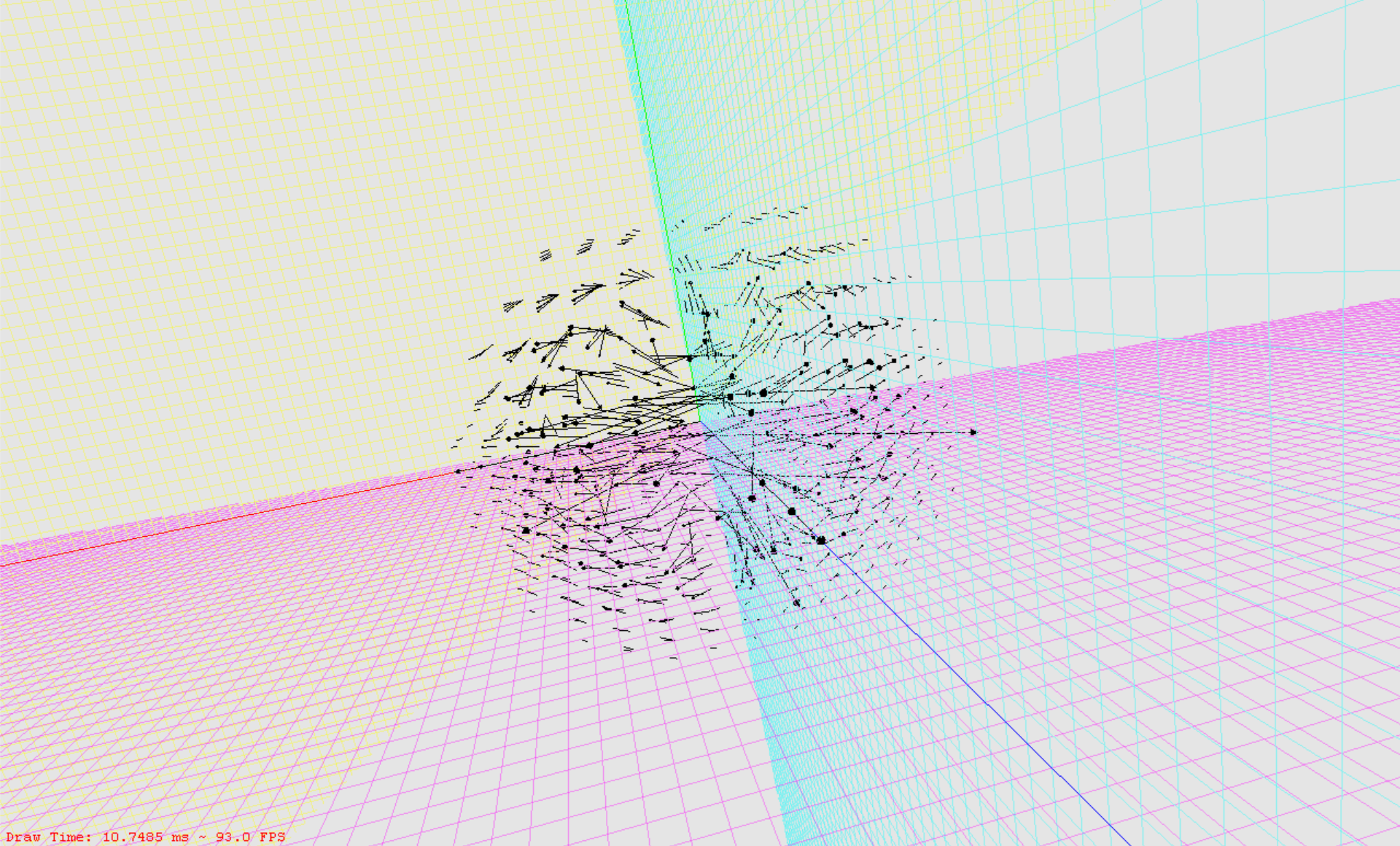}
		\caption{$H$ at $\theta=0.23$}
	\end{subfigure}%
	\begin{subfigure}{.33\textwidth}
		\includegraphics[width=.82\linewidth]{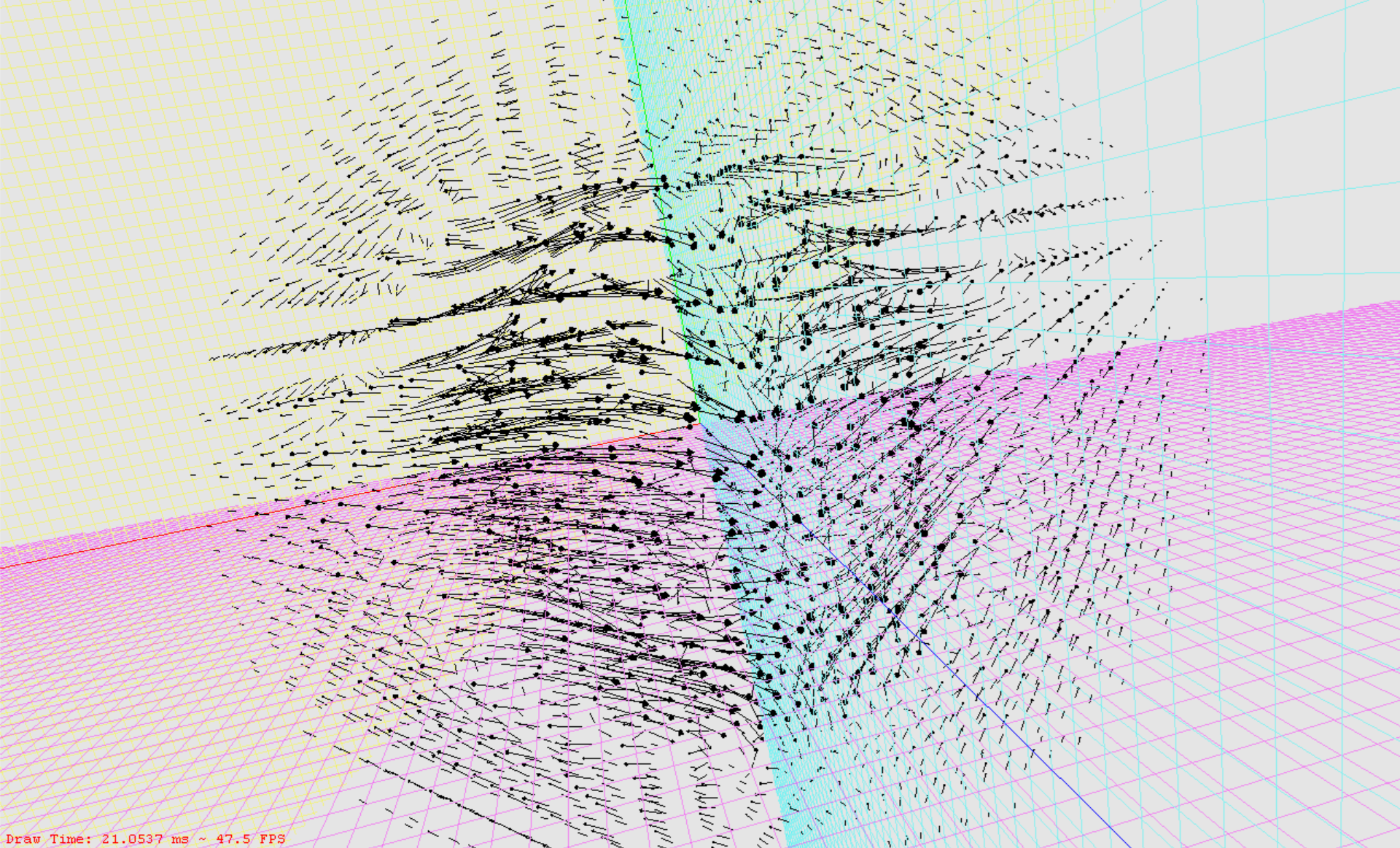}
		\caption{$H$ at $\theta=2.3$}
	\end{subfigure}%
	\begin{subfigure}{.33\textwidth}
		\includegraphics[width=.82\linewidth]{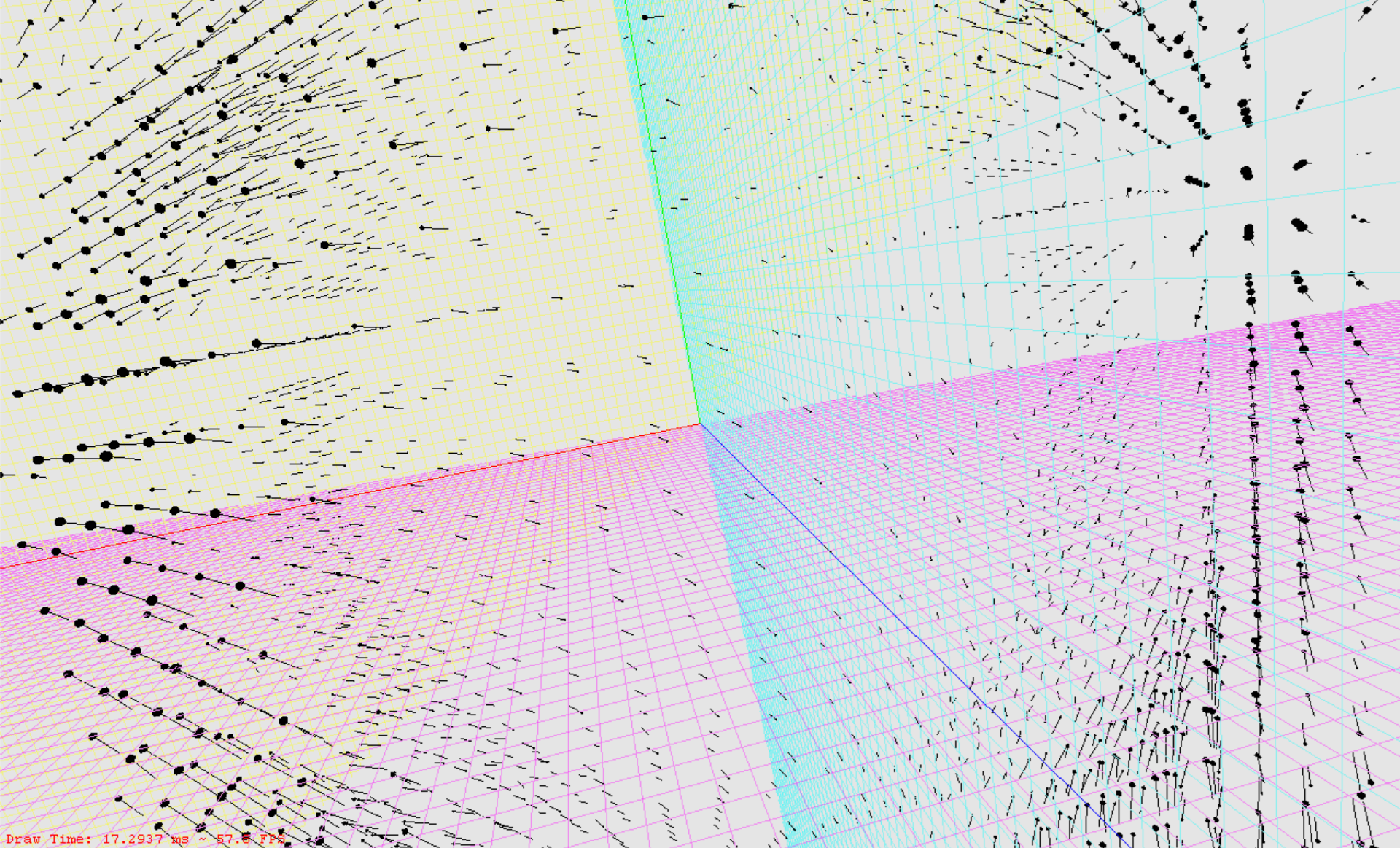}
		\caption{$H$ at $\theta=5.75$}
	\end{subfigure}
	\caption{FDTD:Magnetic field of non-staggered grid. $q=1,10,25$ (Case-study 1).}
	\label{fig:exa.2a.fdtd.H.nonstag}
\end{figure}

\subsubsection{3D data patterns: Case 2}
\begin{figure}[H]
	\begin{subfigure}{.33\textwidth}
		\includegraphics[width=.82\linewidth]{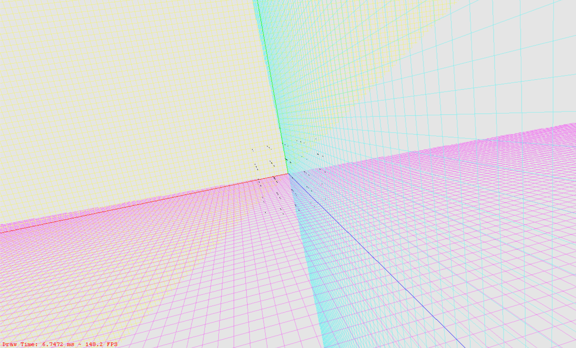}
		\caption{$E$ at $\theta=0.23$}
	\end{subfigure}%
	\begin{subfigure}{.33\textwidth}
		\includegraphics[width=.82\linewidth]{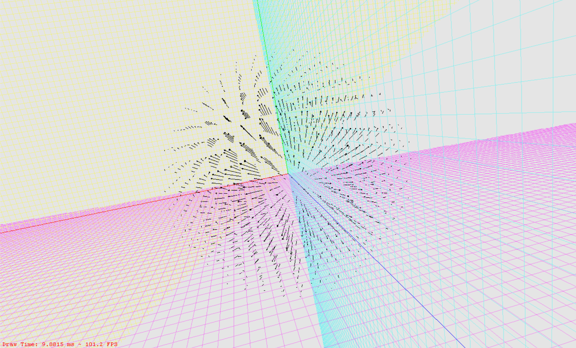}
		\caption{$E$ at $\theta=2.3$}
	\end{subfigure}%
	\begin{subfigure}{.33\textwidth}
		\includegraphics[width=.82\linewidth]{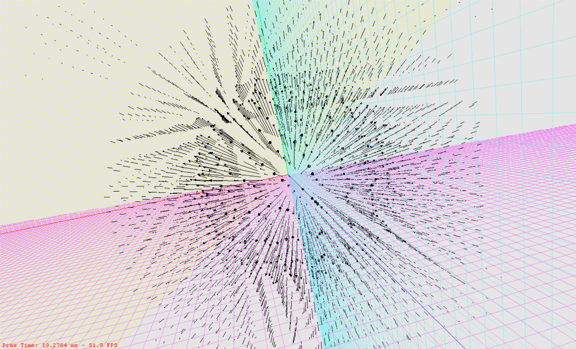}
		\caption{$E$ at $\theta=5.75$}
	\end{subfigure}
	\caption{Electric field by analytical solution. $q=1,10,25$ (Case-study 2).}
	\label{fig:exa.2a.E}
\end{figure}	
\begin{figure}[H]
	\begin{subfigure}{.33\textwidth}
		\includegraphics[width=.82\linewidth]{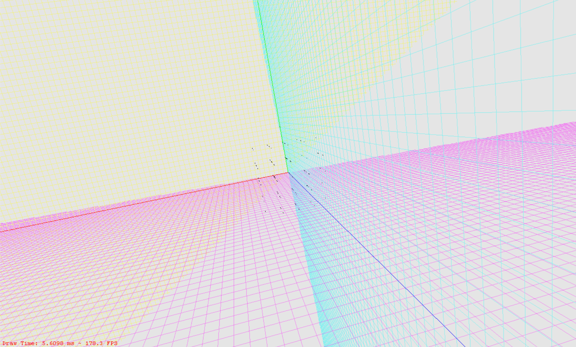}
		\caption{$E$ at $\theta=0.23$}
	\end{subfigure}%
	\begin{subfigure}{.33\textwidth}
		\includegraphics[width=.82\linewidth]{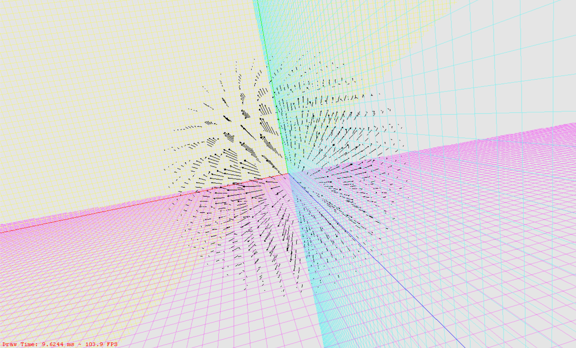}
		\caption{$E$ at $\theta=2.3$}
	\end{subfigure}%
	\begin{subfigure}{.33\textwidth}
		\includegraphics[width=.82\linewidth]{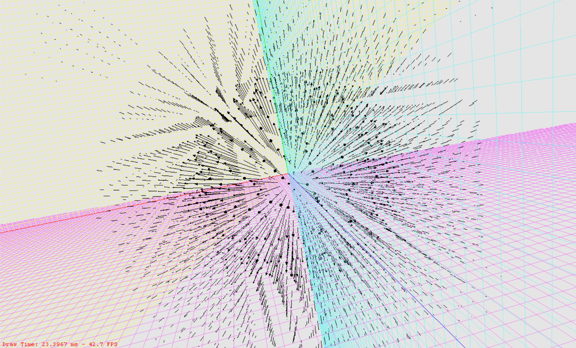}
		\caption{$E$ at $\theta=5.75$}
	\end{subfigure}
	\caption{Electric field by FDTD. $q=1,10,25$ (Case-study 2).}
	\label{fig:exa.2a.fdtd.E}
\end{figure}	

\begin{figure}[H]
	\begin{subfigure}{.33\textwidth}
		\includegraphics[width=.82\linewidth]{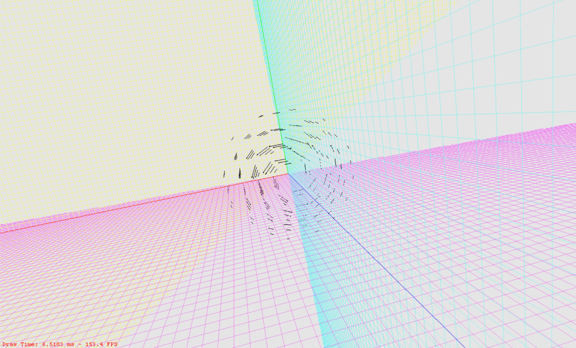}
		\caption{$H$ at $\theta=0.23$}
	\end{subfigure}%
	\begin{subfigure}{.33\textwidth}
		\includegraphics[width=.82\linewidth]{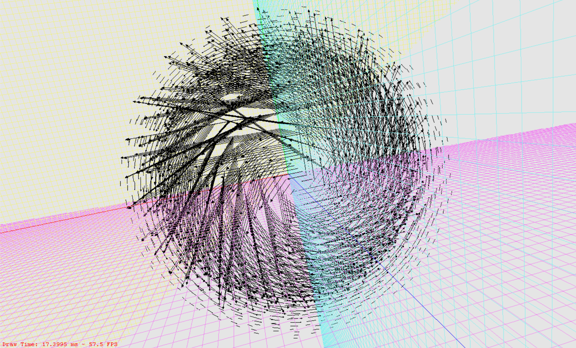}
		\caption{$H$ at $\theta=2.3$}
	\end{subfigure}%
	\begin{subfigure}{.33\textwidth}
		\includegraphics[width=.82\linewidth]{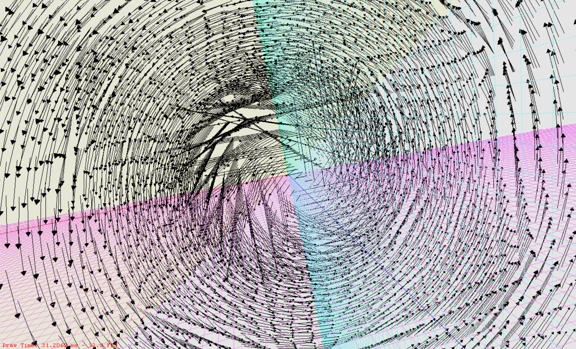}
		\caption{$H$ at $\theta=5.75$}
	\end{subfigure}
	\caption{Magnetic field by analytical solution. $q=1,10,25$ (Case-study 2).}
	\label{fig:exa.2a.H}
\end{figure}	
\begin{figure}[H]
	\begin{subfigure}{.33\textwidth}
		\includegraphics[width=.82\linewidth]{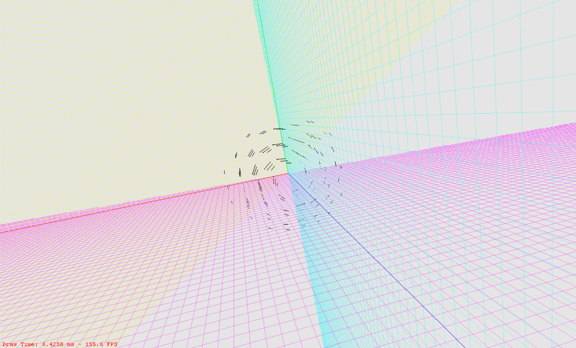}
		\caption{$H$ at $\theta=0.23$}
	\end{subfigure}%
	\begin{subfigure}{.33\textwidth}
		\includegraphics[width=.82\linewidth]{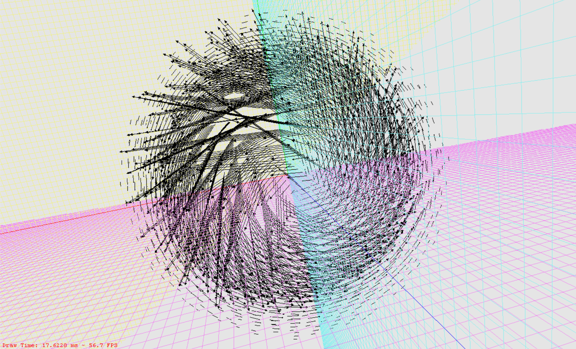}
		\caption{$H$ at $\theta=2.3$}
	\end{subfigure}%
	\begin{subfigure}{.33\textwidth}
		\includegraphics[width=.82\linewidth]{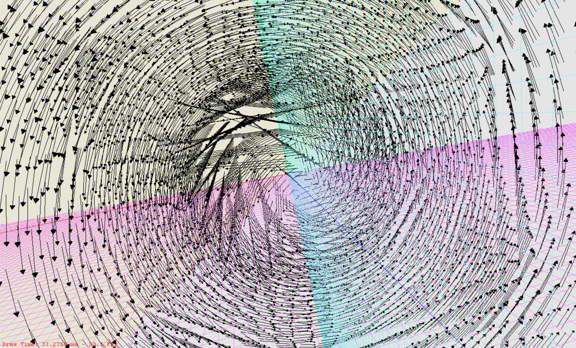}
		\caption{$H$ at $\theta=5.75$}
	\end{subfigure}
	\caption{Magnetic field by FDTD. $q=1,10,25$ (Case-study 2).}
	\label{fig:exa.2a.fdtd.H}
\end{figure}	
Figure \ref{fig:exa.2a.E} shows three snapshots of the electric field obtained from the analytical solution, while Figure \ref{fig:exa.2a.H} presents the corresponding magnetic field snapshots. Figures \ref{fig:exa.2a.fdtd.E} and \ref{fig:exa.2a.fdtd.H} show the corresponding FDTD simulation results.

These three-dimensional visualizations exhibit close agreement between the analytical and FDTD fields, with the principal spatial features reproduced consistently by both approaches.

\subsubsection{3D data patterns: Case 3}
\begin{figure}[H]
	\begin{subfigure}{.33\textwidth}
		\includegraphics[width=.82\linewidth]{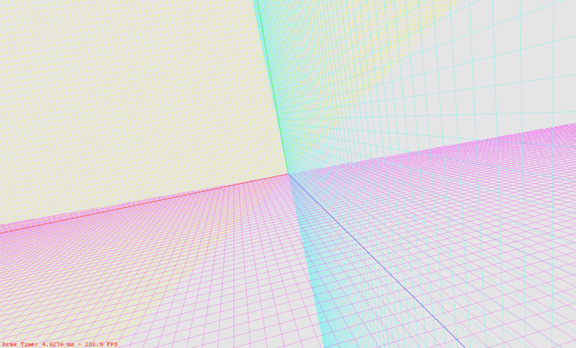}
		\caption{$E$ at $\theta=0.23$}
	\end{subfigure}%
	\begin{subfigure}{.33\textwidth}
		\includegraphics[width=.82\linewidth]{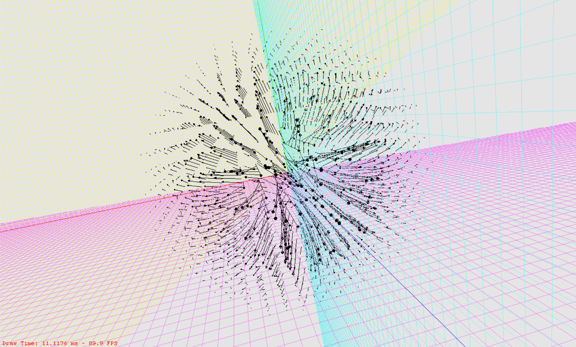}
		\caption{$E$ at $\theta=2.3$}
	\end{subfigure}%
	\begin{subfigure}{.33\textwidth}
		\includegraphics[width=.82\linewidth]{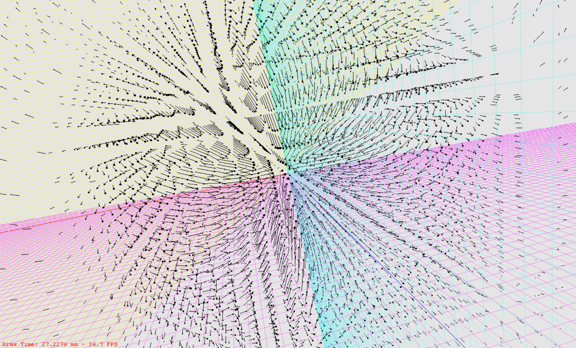}
		\caption{$E$ at $\theta=5.75$}
	\end{subfigure}
	\caption{Electric field by analytical solution. $q=1,10,25$ (Case-study 3).}
	\label{fig:exa.3a.E}
\end{figure}
\begin{figure}[H]
	\begin{subfigure}{.33\textwidth}
		\includegraphics[width=.82\linewidth]{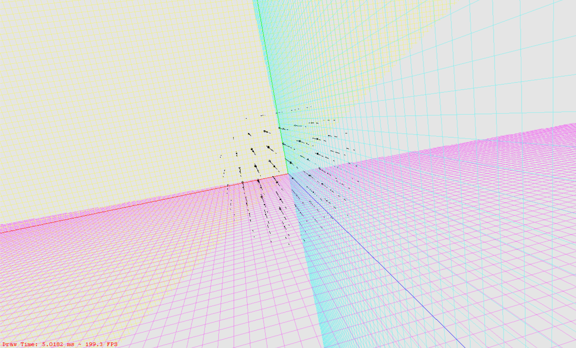}
		\caption{$E$ at $\theta=0.23$}
	\end{subfigure}%
	\begin{subfigure}{.33\textwidth}
		\includegraphics[width=.82\linewidth]{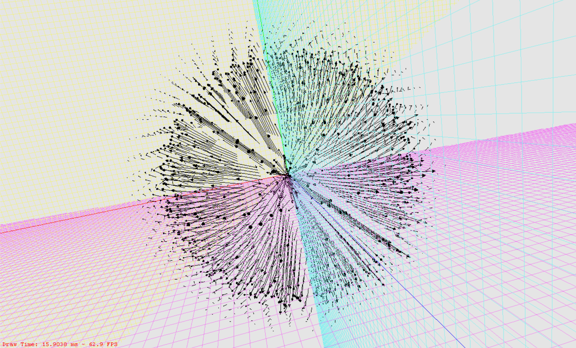}
		\caption{$E$ at $\theta=2.3$}
	\end{subfigure}%
	\begin{subfigure}{.33\textwidth}
		\includegraphics[width=.82\linewidth]{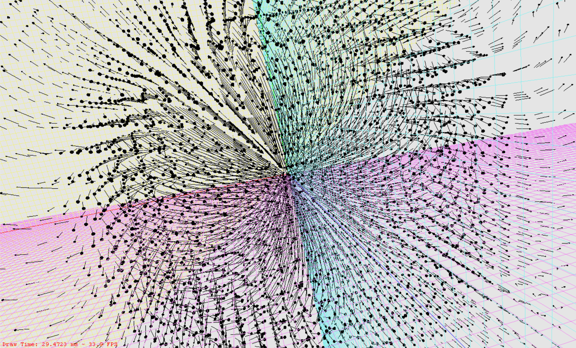}
		\caption{$E$ at $\theta=5.75$}
	\end{subfigure}
	\caption{Electric field by FDTD. $q=1,10,25$ (Case-study 3).}
	\label{fig:exa.3a.fdtd.E}
\end{figure}	

\begin{figure}[H]
	\begin{subfigure}{.33\textwidth}
		\includegraphics[width=.82\linewidth]{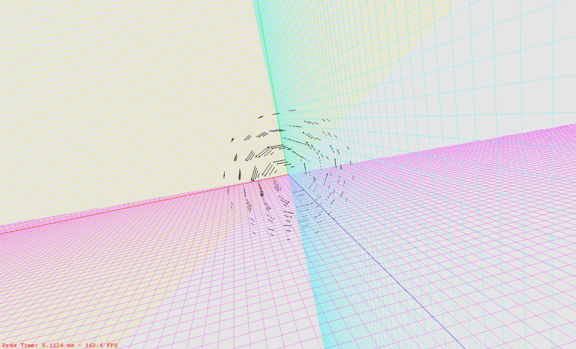}
		\caption{$H$ at $\theta=0.23$}
	\end{subfigure}%
	\begin{subfigure}{.33\textwidth}
		\includegraphics[width=.82\linewidth]{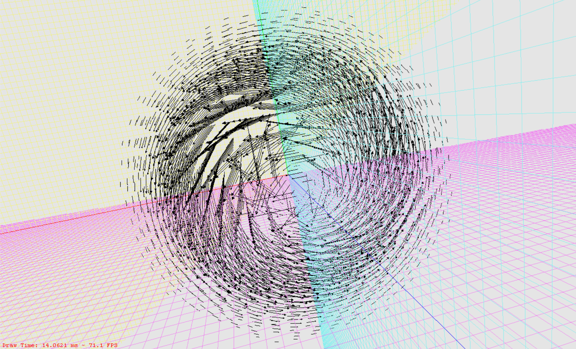}
		\caption{$H$ at $\theta=2.3$}
	\end{subfigure}%
	\begin{subfigure}{.33\textwidth}
		\includegraphics[width=.82\linewidth]{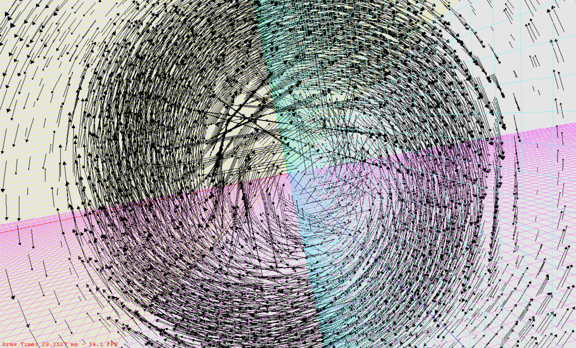}
		\caption{$H$ at $\theta=5.75$}
	\end{subfigure}
	\caption{Magnetic field by analytical solution. $q=1,10,25$ (Case-study 3).}
	\label{fig:exa.3a.H}
\end{figure}	
\begin{figure}[H]
	\begin{subfigure}{.33\textwidth}
		\includegraphics[width=.82\linewidth]{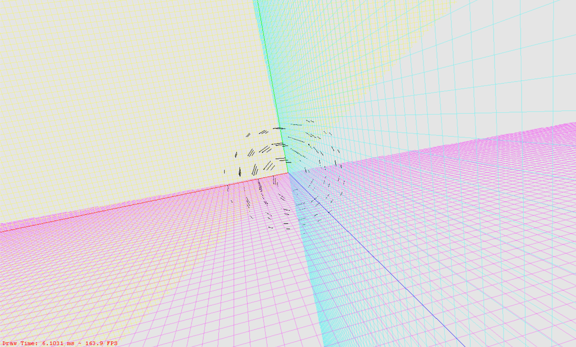}
		\caption{$H$ at $\theta=0.23$}
	\end{subfigure}%
	\begin{subfigure}{.33\textwidth}
		\includegraphics[width=.82\linewidth]{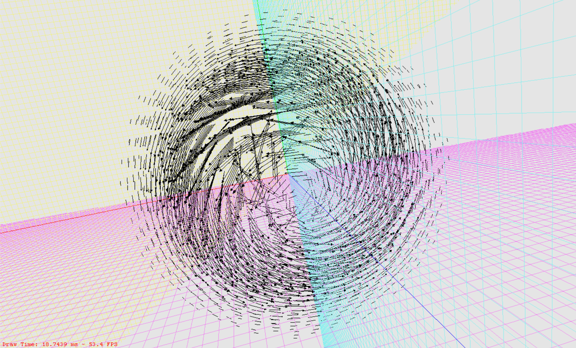}
		\caption{$H$ at $\theta=2.3$}
	\end{subfigure}%
	\begin{subfigure}{.33\textwidth}
		\includegraphics[width=.82\linewidth]{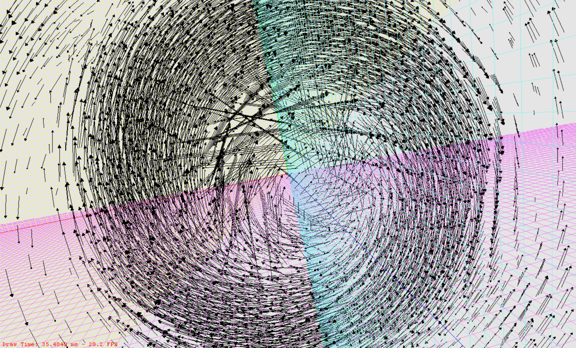}
		\caption{$H$ at $\theta=5.75$}
	\end{subfigure}
	\caption{Magnetic field by FDTD. $q=1,10,25$ (Case-study 3).}
	\label{fig:exa.3a.fdtd.H}	
\end{figure}	

Figure \ref{fig:exa.3a.E} shows three snapshots of the electric field obtained from the analytical solution, while Figure \ref{fig:exa.3a.H} presents the corresponding magnetic field snapshots. Figures \ref{fig:exa.3a.fdtd.E} and \ref{fig:exa.3a.fdtd.H} show the corresponding FDTD simulation results.

These three-dimensional visualizations again exhibit close agreement between the analytical and FDTD fields, with the overall field structures appearing nearly identical.

\subsubsection{3D data patterns: Case 4}
\begin{figure}[H]
	\begin{subfigure}{.33\textwidth}
		\includegraphics[width=.82\linewidth]{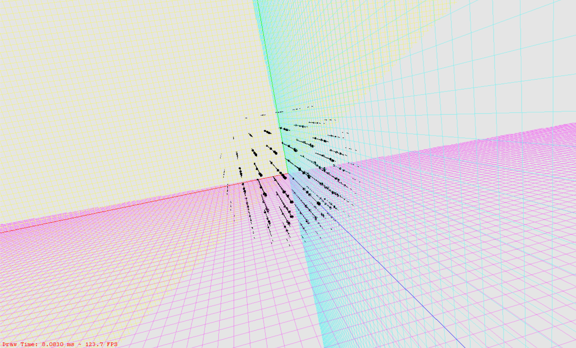}
		\caption{$E$ at $\theta=0.23$}
	\end{subfigure}%
	\begin{subfigure}{.33\textwidth}
		\includegraphics[width=.82\linewidth]{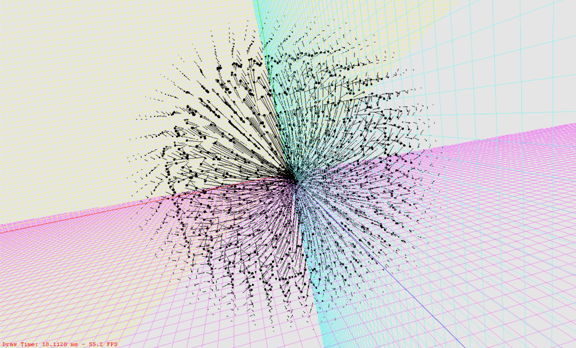}
		\caption{$E$ at $\theta=2.3$}
	\end{subfigure}%
	\begin{subfigure}{.33\textwidth}
		\includegraphics[width=.82\linewidth]{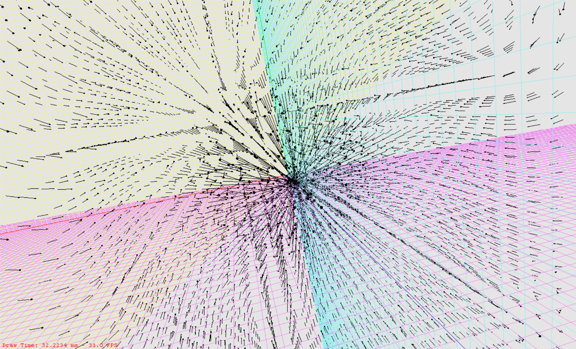}
		\caption{$E$ at $\theta=5.75$}
	\end{subfigure}
	\caption{Electric field by analytical solution. $q=1,10,25$ (Case-study 4).}
	\label{fig:exa.4a.E}
\end{figure}	
\begin{figure}[H]
	\begin{subfigure}{.33\textwidth}
		\includegraphics[width=.82\linewidth]{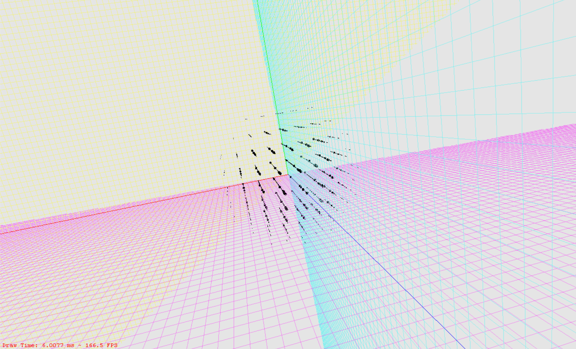}
		\caption{$E$ at $\theta=0.23$}
	\end{subfigure}%
	\begin{subfigure}{.33\textwidth}
		\includegraphics[width=.82\linewidth]{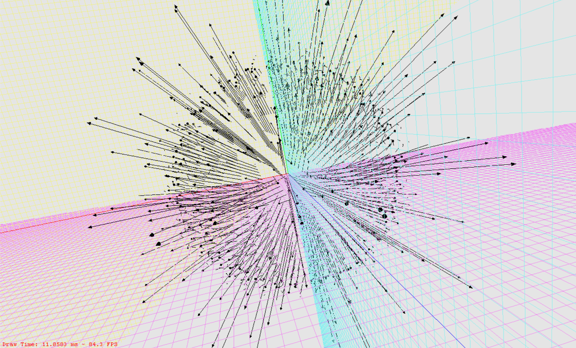}
		\caption{$E$ at $\theta=2.3$}
	\end{subfigure}%
	\begin{subfigure}{.33\textwidth}
		\includegraphics[width=.82\linewidth]{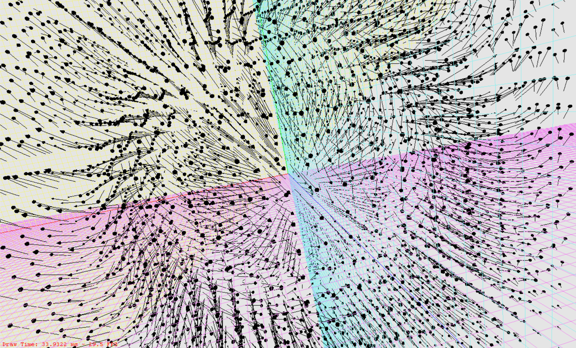}
		\caption{$E$ at $\theta=5.75$}
	\end{subfigure}
	\caption{Electric field by FDTD. $q=1,10,25$ (Case-study 4).}
	\label{fig:exa.4a.fdtd.E}
\end{figure}

\begin{figure}[H]
	\begin{subfigure}{.33\textwidth}
		\includegraphics[width=.82\linewidth]{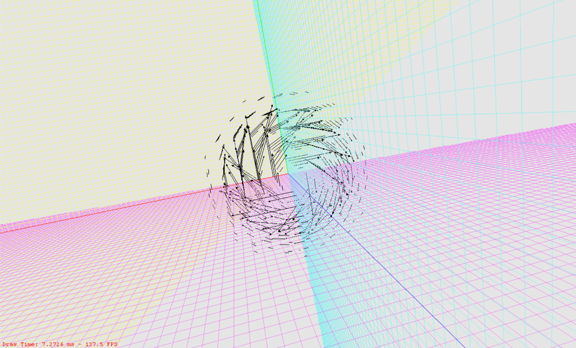}
		\caption{$H$ at $\theta=0.23$}
	\end{subfigure}%
	\begin{subfigure}{.33\textwidth}
		\includegraphics[width=.82\linewidth]{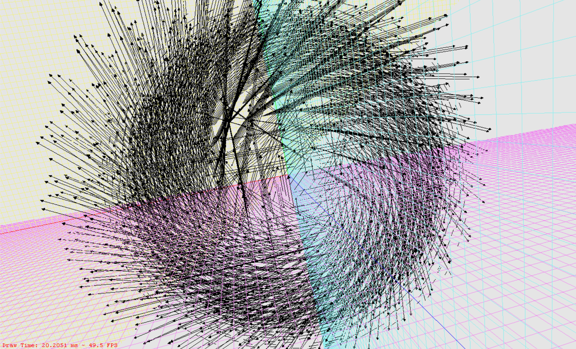}
		\caption{$H$ at $\theta=2.3$}
	\end{subfigure}%
	\begin{subfigure}{.33\textwidth}
		\includegraphics[width=.82\linewidth]{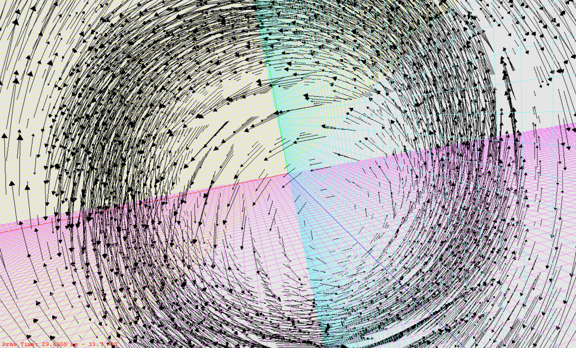}
		\caption{$H$ at $\theta=5.75$}
	\end{subfigure}
	\caption{Magnetic field by analytical solution. $q=1,10,25$ (Case-study 4).}
	\label{fig:exa.4a.H}
\end{figure}
\begin{figure}[H]
	\begin{subfigure}{.33\textwidth}
		\includegraphics[width=.82\linewidth]{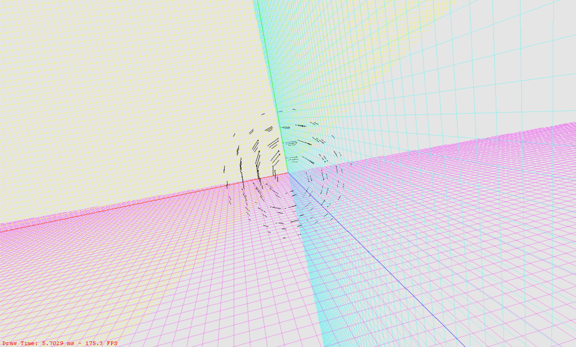}
		\caption{$H$ at $\theta=0.23$}
	\end{subfigure}%
	\begin{subfigure}{.33\textwidth}
		\includegraphics[width=.82\linewidth]{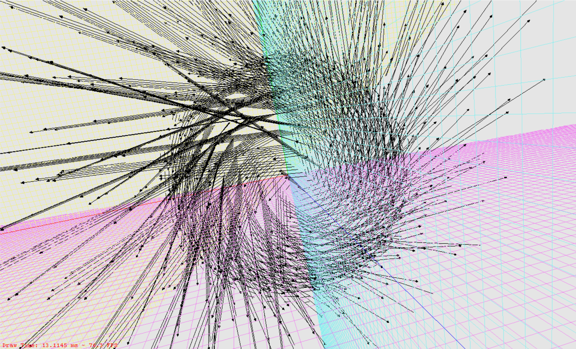}
		\caption{$H$ at $\theta=2.3$}
	\end{subfigure}%
	\begin{subfigure}{.33\textwidth}
		\includegraphics[width=.82\linewidth]{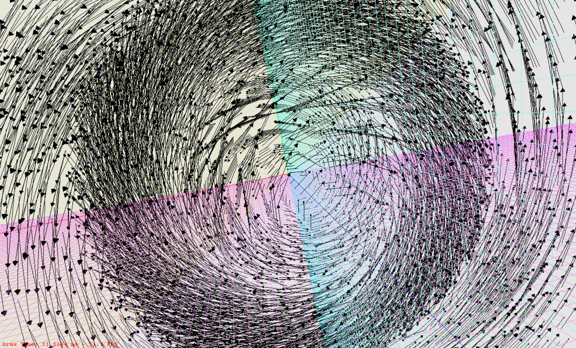}
		\caption{$H$ at $\theta=5.75$}
	\end{subfigure}
	\caption{Magnetic field by FDTD. $q=1,10,25$ (Case-study 4).}
	\label{fig:exa.4a.fdtd.H}
\end{figure}
Figure \ref{fig:exa.4a.E} shows three snapshots of the electric field obtained from the analytical solution, while Figure \ref{fig:exa.4a.H} presents the corresponding magnetic field snapshots. Figures \ref{fig:exa.4a.fdtd.E} and \ref{fig:exa.4a.fdtd.H} show the corresponding FDTD simulation results.

These three-dimensional visualizations also exhibit close agreement between the analytical and FDTD fields, showing that the principal three-dimensional field patterns are reproduced consistently by the FDTD simulation.

\subsection{Effects of FDTD mesh refinements}
\label{sect.fdtd.mesh}
Mesh size is a key factor affecting the accuracy of FDTD simulations, whereas it does not influence the analytical solutions derived in this work. Consequently, the analytical solutions can serve as benchmark reference fields for evaluating the effects of mesh refinement in FDTD. The purpose of this section is to illustrate one possible application of the proposed analytical solutions rather than to provide a comprehensive investigation of FDTD algorithms, which is an important subject but beyond the scope of the present paper.

The effects of FDTD mesh refinement are examined by progressively reducing the spatial step size $\bigtriangleup_s$. The initial spatial and temporal step sizes are $\bigtriangleup_s = 0.4$ and $\bigtriangleup_\theta = 0.23 $, respectively. The simulations are then repeated using the reduced spatial step sizes defined by (\ref{fdtd.dsn}), while the corresponding temporal step sizes are adjusted to preserve the Courant number.

\begin{equation}
	\label{fdtd.dsn}
	\begin{split}
		\bigtriangleup_{s} &= 0.4 ; n=0 \\
		\bigtriangleup_{sn} &= \bigtriangleup_s /(2n) ; n=1,2,...	
	\end{split}
\end{equation}

For each mesh refinement, two statistics are calculated as error indicators: (1) the maximum difference, defined by (\ref{Ex_err_max}); and (2) the cumulative difference, defined by (\ref{Ex_err_sum}).
\begin{align}
	\label{Ex_err_max}
	err_{max}(n) &= \max\limits_q \frac{\max\limits_s|E_{xa}-E_{xn}|}{\max\limits_s|E_{xa}|} \\	
	\label{Ex_err_sum}
	err_{sum}(n) &=\underset{q} \Sigma {\frac{\underset{s}\Sigma|E_{xa}-E_{xn}|}{\max\limits_s|E_{xa}|}}
\end{align}
where $E_{xa} $ is the $E_x$ component calculated from the analytical solution, and $E_{xn} $ is the $E_x $ component obtained from the FDTD simulation using the spatial step size defined by (\ref{fdtd.dsn}). 

For each refinement level, the error metrics are evaluated using the spatial locations and time instants common to all simulations, defined by (\ref{fdtd.Ex}), so that the comparisons are performed on the same set of sampling points.

\begin{figure}[H]
	\begin{subfigure}{.5\textwidth}
		\includegraphics[width=.9\linewidth]{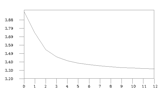}
		\caption{Max difference vs step sizes}
		\label{fig:exa.1.Ex.errMax}
	\end{subfigure}%
	\begin{subfigure}{.5\textwidth}
		\includegraphics[width=.9\linewidth]{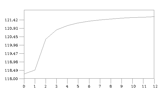}
		\caption{Sum of difference vs step sizes}
		\label{fig:exa.1.Ex.errSum}
	\end{subfigure}
	\caption{Effects of step sizes on FDTD simulations (Case-study 1).}
	\label{fig:exa.1.Ex.err}
\end{figure}
\textbf{Case 1}. Figure \ref{fig:exa.1.Ex.errMax} shows the maximum difference between $E_x$ values computed from the analytical solution and those obtained from FDTD simulations, while Figure \ref{fig:exa.1.Ex.errSum} shows the cumulative absolute difference. The variable n on the horizontal axis denotes the spatial step size defined in (\ref{fdtd.dsn}). These figures lead to the following observations:
\begin{itemize}
	\item The maximum difference decreases as the step size is reduced.
	\item The cumulative difference increases as the step size is reduced.
	\item Further reduction of the step size yields diminishing changes once the step size is sufficiently small.
\end{itemize}

\begin{figure}[H]
	\begin{subfigure}{.5\textwidth}
		\includegraphics[width=.9\linewidth]{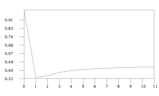}
		\caption{Max difference vs step sizes}
		\label{fig:exa.2.Ex.errMax}
	\end{subfigure}%
	\begin{subfigure}{.5\textwidth}
		\includegraphics[width=.9\linewidth]{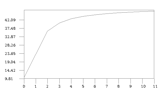}
		\caption{Sum of difference vs step sizes}
		\label{fig:exa.2.Ex.errSum}
	\end{subfigure}
	\caption{Effects of step sizes on FDTD simulations (Case-study 2).}
	\label{fig:exa.2.Ex.err}
\end{figure}
\textbf{Case 2}. Figure \ref{fig:exa.2.Ex.errMax} shows the maximum difference between $E_x$ values computed from the analytical solution and those obtained from FDTD simulations, while Figure \ref{fig:exa.2.Ex.errSum} shows the cumulative absolute difference. These figures lead to the following observations:
\begin{itemize}
	\item Reducing the step size to $\bigtriangleup_s / 2 $ leads to a decrease in the maximum difference. Further reduction of the step size results in a slight increase in the maximum difference, indicating a weak non-monotonic trend.
	\item Reducing the step size to $\bigtriangleup_s / 2 $ increases the cumulative difference. Further reduction of the step size produces only minor changes.
\end{itemize}
A similar weak non-monotonic behavior in the maximum difference is observed in a subset of the case studies, while the overall trend remains relatively flat.

\begin{figure}[H]
	\begin{subfigure}{.5\textwidth}
		\includegraphics[width=.9\linewidth]{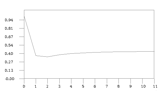}
		\caption{Max difference vs step sizes}
		\label{fig:exa.3.Ex.errMax}
	\end{subfigure}%
	\begin{subfigure}{.5\textwidth}
		\includegraphics[width=.9\linewidth]{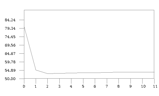}
		\caption{Sum of difference vs step sizes}
		\label{fig:exa.3.Ex.errSum}
	\end{subfigure}
	\caption{Effects of step sizes on FDTD simulations (Case-study 3).}
	\label{fig:exa.3.Ex.err}
\end{figure}
\textbf{Case 3}. Figure \ref{fig:exa.3.Ex.errMax} shows the maximum difference between $E_x$ values computed from the analytical solution and those obtained from FDTD simulations, while Figure \ref{fig:exa.3.Ex.errSum} shows the cumulative difference. These figures lead to the following observations:
\begin{itemize}
	\item Reducing the step size to $\bigtriangleup_s /2$ leads to a decrease in the maximum difference. Further reduction of the step size results in negligible changes, with only a slight increase in the value.
	\item Reducing the step size to $\bigtriangleup_s /2$ also decreases the cumulative difference. Further reduction of the step size produces negligible changes, and the values remain essentially constant.
\end{itemize}
These trends indicate that further reduction of the step size yields limited improvement in FDTD accuracy once the step size becomes sufficiently small.

\begin{figure}[H]
	\begin{subfigure}{.5\textwidth}
		\includegraphics[width=.9\linewidth]{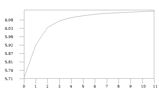}
		\caption{Max difference vs step sizes}
		\label{fig:exa.4.Ex.errMax}
	\end{subfigure}%
	\begin{subfigure}{.5\textwidth}
		\includegraphics[width=.9\linewidth]{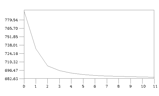}
		\caption{Sum of difference vs step sizes}
		\label{fig:exa.4.Ex.errSum}
	\end{subfigure}
	\caption{Effects of step sizes on FDTD simulations (Case-study 4).}
	\label{fig:exa.4.Ex.err}
\end{figure}

\textbf{Case 4}. Figure \ref{fig:exa.4.Ex.errMax} shows the maximum difference between $E_x$ values computed from the analytical solution and those obtained from FDTD simulations, while Figure \ref{fig:exa.4.Ex.errSum} shows the cumulative difference. These figures exhibit the same trend observed in the other cases: the FDTD results become largely insensitive to further step size reduction once the step size becomes sufficiently small.

\textbf{Summary}
For all cases considered, the error measures approach limiting values as the mesh is refined. However, the convergence behavior depends on the definition of the error indicator. Depending on the selected measure and the sampling procedure, the error may decrease monotonically, increase monotonically, or vary non-monotonically before approaching its limiting value.

The analytical reference fields therefore reveal that, for the problems considered here, reducing the mesh size alone does not necessarily produce a monotonic reduction in the observed errors. The observed convergence behavior depends on factors such as the definition of the error indicator, the spatial sampling locations, and the number of sampled points.

Since error measures converge to non-zero limiting values, further improvements in numerical accuracy may require modifications to the numerical approximation itself rather than continued mesh refinement alone. The analytical reference fields developed in this work provide a natural benchmark for such investigations. Our preliminary numerical experiments suggest that, when these reference fields are used to evaluate higher-order FDTD schemes, the numerical errors may decrease monotonically with increasing approximation order. A detailed investigation of higher-order FDTD methods is beyond the scope of the present paper.

\section{Discussion}

The present work constructs a generic solution to Maxwell’s equations using an infinite-order curl-operator expansion. This approach yields what we term an analytical solution, in contrast to conventional numerical schemes. In the case studies, the differential operators appearing in the general expressions are explicitly evaluated, leading to formulas that we refer to as closed-form analytical solutions, which are formed by Taylor style series of algebraic expressions.

These closed-form solutions reveal that the electromagnetic fields travel in deformed sine and cosine functions; the structures of deformations depend on the initial conditions and source terms. To illustrate the nature of this deformation, consider the classical traveling-wave sine function, which admits the series representation
\begin{equation*}
	\sin(x-ct) = \sum_{n=0}^{\infty} \frac{(-1)^n (x-ct)^{2n+1}}{(2n+1)!}
\end{equation*}
Within the present framework, a representative deformed sine function takes the form
\begin{equation*}
	\sum_{n=0}^{\infty}\frac{(-1)^n (ct)^{2(n+s)+1}}{(2(n+s)+1)!}\sum_{k=0}^{n}p_{2,n,k}(2x)^{2k}
\end{equation*}
where $p_{2,n,k}$ are coefficients determined by the operator expansion and $s$ is a parameter arising from the solution construction. In contrast to the classical case, the spatial and temporal variables are no longer coupled through a simple phase term $(x-ct)$; instead, the solution exhibits a structured mixing of spatial polynomials and temporally weighted series terms. Many other deformations arise naturally within the same analytical framework. This structural shift suggests that the classical notion of phase velocity may require reinterpretation in this mathematical context. However, this is only a mathematical observation, not a physical interpretation or prediction.

The deformed trigonometric functions describe the instantaneous behavior of the electromagnetic fields. Understanding their implications for steady-state or long-time behavior requires analysis of their asymptotic properties as spatial and temporal variables approach infinity. In particular, it remains an open question whether the deformation diminishes in these limits, potentially recovering standard trigonometric behavior.

Notably, the deformed sine and cosine functions are not limited to specific case studies but are intrinsically embedded in the general solution formulas (\ref{3d.solIniH}), (\ref{3d.solIniE}), (\ref{3d.solSrcH}) and (\ref{3d.solSrcE}). This suggests that, within the present analytical framework, the evolution of electromagnetic fields can be interpreted as being governed by these deformed trigonometric structures, whose forms are determined by the initial conditions and source terms. This observation reveals a unifying structural feature of the analytical solutions and provides a new perspective on how electromagnetic fields propagate under general conditions.

Unlike classical periodic functions, whose values can be readily evaluated for arbitrarily large arguments, the deformed sine and cosine functions are not strictly periodic. This lack of exact periodicity introduces additional challenges in evaluating the solutions for large spatial or temporal domains. Developing efficient analytical or numerical methods for their evaluation in such regimes represents a significant direction for future research in applied mathematics.

The Time-Space theorem presented in \ref{thm.ts} can also be applied to higher-order FDTD simulations. The author has implemented such schemes using both the Yee grid and a non-staggered grid, and in both cases the expected improvements in accuracy were observed.

\section{Conclusion}

This work presents an operator-based framework for constructing analytical solutions to Maxwell’s equations in open-space settings with general initial conditions and source terms. The resulting infinite-order curl-operator expansion provides explicit function-to-function mappings and leads to closed-form representations in terms of deformed sine and cosine functions. Comparisons with FDTD simulations demonstrate both agreement in field structure and systematic differences attributable to discretization effects and estimation errors. The observed insensitivity of FDTD results to further step size reduction highlights inherent limitations of numerical refinement. The analytical solutions offer new theoretical insight into electromagnetic field behavior and serve as practical benchmarks for computational electromagnetics. Future work will further investigate the properties of the deformed functions and extend the framework to broader classes of problems.
 
\appendix

\section{Time-Space theorem}
\label{thm.ts}
Maxwell’s equations establish a correspondence between first-order temporal derivatives and first-order spatial curls. The following theorem generalizes this relationship to derivatives of arbitrary order, providing a systematic extension of the time–space structure.
\begin{thm} \label{timespacetheorem} If a function-pair $H(x,y,z,\theta), E(x,y,z,\theta) \in C^{\infty}(\mathbb{R}^4, \mathbb{R}^3) $ satisfies Maxwell's equations (\ref{3d.1}) and (\ref{3d.2}) then for any $n \ge 0 $
	\begin{equation}
		\label{ts.h1}
		\pHn{2n+1} {\theta}= \frac{1}{\eta} (-1)^{n+1} \nabla^{2n+1} \times E + J_{h,2n+1}
	\end{equation} 
	\begin{equation}
		\label{ts.h2}
		\pHn{2n+2} {\theta}= (-1)^{n+1} \nabla^{2n+2} \times H + J_{h,2n+2}
	\end{equation}
	\begin{equation}
	\label{ts.e1}
	\pEn{2n+1} {\theta}= \eta (-1)^{n} \nabla^{2n+1} \times H + \eta J_{e,2n+1}
	\end{equation} 
	\begin{equation}
	\label{ts.e2}
	\pEn{2n+2} {\theta}= (-1)^{n+1} \nabla^{2n+2} \times E + \eta J_{e,2n+2}
	\end{equation} 
where the source terms are given by
\begin{equation}
	\label{ts.jh1}
	\begin{gathered}
	J_{h,1} = 0 \\
	J_{h,2n+1} = \sum_{m=1}^{n} (-1)^{m+1} \nabla^{2m-1} \times \pJn{2(n-m)+1}, n > 0
	\end{gathered}
\end{equation}
\begin{equation}
	\label{ts.jh2}
	J_{h,2n+2} = \sum_{m=0}^{n} (-1)^{m} \nabla^{2m+1} \times \pJn{2(n-m)}
\end{equation}
\begin{equation}
	\label{ts.je1}
	J_{e,2n+1} = \sum_{m=0}^{n} (-1)^{m+1} \nabla^{2m} \times \pJn{2(n-m)}
\end{equation}
\begin{equation}
	\label{ts.je2}
	J_{e,2n+2} = \sum_{m=0}^{n} (-1)^{m+1} \nabla^{2m} \times \pJn{2(n-m)+1}
\end{equation}
\end{thm}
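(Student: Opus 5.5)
The proof is by induction on $n$, proving the four identities \eqref{ts.h1}--\eqref{ts.e2} together. Throughout, $\nabla^{k}\times$ is read as $(\nabla\times)^k$. Since $H,E,J\in C^\infty$, partial derivatives commute (Schwarz), so $\partial_\theta$ and $(\nabla\times)^k$ can be interchanged freely. This is the only analytic input; everything else is index bookkeeping.

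\textbf{Base case.} For $n=0$, \eqref{ts.h1} with $J_{h,1}=0$ is exactly \eqref{3d.1}. Since $J_{e,1}=-J$ by \eqref{ts.je1}, \eqref{ts.e1} is exactly \eqref{3d.2}.

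\textbf{Odd to even.} Differentiate \eqref{ts.h1} once in $\theta$, commute $\partial_\theta$ with $(\nabla\times)^{2n+1}$, and substitute \eqref{3d.2} for $\partial E/\partial\theta$. This gives
\[
\pHn{2n+2}{\theta}=(-1)^{n+1}\nabla^{2n+2}\times H+(-1)^{n}\nabla^{2n+1}\times J+\frac{\partial J_{h,2n+1}}{\partial\theta}.
\]
It then remains to check the recursion $J_{h,2n+2}=\partial_\theta J_{h,2n+1}+(-1)^n\nabla^{2n+1}\times J$. Differentiate \eqref{ts.jh1} termwise and shift $m\mapsto m+1$; this produces the terms $m=0,\dots,n-1$ of \eqref{ts.jh2}. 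The extra term $(-1)^n\nabla^{2n+1}\times J$ is exactly the missing $m=n$ term. The case $n=0$ uses $J_{h,1}=0$.

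In the same way, differentiate \eqref{ts.e1} and use \eqref{3d.1} for $\partial H/\partial\theta$. The factor $\eta\cdot\frac{1}{\eta}$ cancels, which explains why \eqref{ts.e2} carries no $\eta$ on the curl term. No extra source term appears here, so we need $J_{e,2n+2}=\partial_\theta J_{e,2n+1}$. This is immediate from \eqref{ts.je1}--\eqref{ts.je2}, because differentiation raises each $\partial^{2(n-m)}$ to $\partial^{2(n-m)+1}$.

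\textbf{Even to odd.} Differentiate \eqref{ts.h2}, use \eqref{3d.1}, and compare with \eqref{ts.h1} at $n+1$; this requires $J_{h,2n+3}=\partial_\theta J_{h,2n+2}$. Differentiate \eqref{ts.e2}, use \eqref{3d.2}, and compare with \eqref{ts.e1} at $n+1$. The $-\eta J$ in \eqref{3d.2} contributes $(-1)^{n}\eta\nabla^{2n+2}\times J$, and this supplies the new $m=n+1$ term of $J_{e,2n+3}$.

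\textbf{Main obstacle.} The difficulty is not conceptual but lies in the signs and index ranges in these four source-term recursions. The two points needing care are the lower limit $m=1$ in \eqref{ts.jh1} and the sign $(-1)^{m+1}$ in \eqref{ts.je1}. I would verify each recursion by writing the generic term before and after the shift and checking the boundary term explicitly. I would also confirm the cases $n=0,1$ by direct differentiation of \eqref{3d.1}--\eqref{3d.2} as a sanity check.
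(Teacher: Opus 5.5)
Your proposal is correct and follows essentially the same route as the paper. Both argue by induction on $n$: differentiate each identity once in $\theta$, substitute \eqref{3d.1} or \eqref{3d.2}, and re-identify the source terms by an index shift. You are more explicit than the paper about the source-term recursions it leaves implicit. These are $J_{h,2n+2}=\partial_\theta J_{h,2n+1}+(-1)^n(\nabla\times)^{2n+1}J$ and $J_{e,2n+3}=\partial_\theta J_{e,2n+2}+(-1)^n(\nabla\times)^{2n+2}J$, together with the two shift-only ones.
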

\begin{proof}
	We will induct on n.
	
	\textbf{Base case ($n=0, n=1$)}: It is easy to validate that for $n=0$ and $n=1$ the theorem holds.
	
	\textbf{Inductive Hypothesis ($n=m$)}: Assume for some $m \in \mathbb{Z}_{\ge 0}$ the theorem holds.
	
	\textbf{Inductive Step}: 
	
	By applying the differential operation $ \partial / {\partial \theta} $ on both sides of (\ref{ts.h2}) with $n=m$, we have
\begin{equation}
	\label{ts.p1}
	\pHn{2(m+1)+1}{\theta} = \frac{1}{\eta} (-1)^{(m+1)+1} \curl{2(m+1)+1} E + \sum_{k=1}^{m+1}(-1)^{k+1} \curl{2k-1} \pJn{2(m+1-k)+1} 
\end{equation}
The above shows that (\ref{ts.h1}) holds for $n=m+1$.	
	
By applying the differential operation ${\partial} / {\partial \theta} $ on both sides of (\ref{ts.p1}) and substitute (\ref{3d.2}) into it, we have
\begin{equation*}
	\pHn{2(m+2)}{\theta} = (-1)^{m+2} \curl{2(m+2)} H + \sum_{k=0}^{m+1} (-1)^k \curl{2k+1} \pJn{2(m+1-k)}
\end{equation*}
The above shows that (\ref{ts.h2}) holds for $n=m+1 $.

By applying the differential operation ${\partial} / {\partial \theta} $ on both sides of (\ref{ts.e2}) with $n=m $ and substitute (\ref{3d.2}) into it, we have
\begin{equation}
	\label{ts.p2}
	\pEn{2(m+1)+1}{\theta} = \eta (-1)^{m+1} \curl{2(m+1)+1} H + \eta \sum_{k=0}^{m+1} (-1)^{k+1} \curl{2k} \pJn{2(m+1-k)}
\end{equation}
The above shows that (\ref{ts.e1}) holds for $n=m+1$.

By applying the differential operation ${\partial} / {\partial \theta} $ on both sides of (\ref{ts.p2}) and substitute (\ref{3d.1}) into it, we have
\begin{equation*}
	\pEn{2(m+1+1)}{\theta} = (-1)^{m+1+1} \curl{2(m+1+1)} E + \eta \sum_{k=0}^{m+1} (-1)^{k+1} \curl{2k} \pJn{2(m+1-k)+1}
\end{equation*}
The above shows that (\ref{ts.e2}) holds for $n=m+1$.

$\therefore$ By the principle of induction, (\ref{ts.h1}), (\ref{ts.h2}), (\ref{ts.e1}) and (\ref{ts.e2}) hold for all $n\in \mathbb{Z}_{\ge 0}$.
\end{proof}

\section{Formulas used by the case-studies}
\label{appendix.cases}
To apply the extended Taylor series presented in this paper, one must obtain analytical expressions for the derivatives and curls of the initial values and the source term. For the case studies presented in this paper, the required formulas are provided and proved in this appendix.

\subsection{\textbf{Identities}}
The following identities are needed to deduce solutions in the case studies.
\begin{lemma} For $n \ge 0, n \ge k \ge 0 $ we have the following identities
	\begin{align}
		\label{ph.n.n}
		p_{h,n,n} &= 1
\\
		\label{lemma.p2nk}
		p_{2,n,k} &= p_{1,n,k} + 4(k+1) p_{1,n,k+1}
\\
		\label{lemma.p3nk}
		p_{3,n,k} &= 4(k+1)p_{2,n,k+1} + p_{2,n,k}
\\
		\label{lemma.p1np1k}
		p_{1,n+1,k} &= 2(2k+5)p_{2,n,k} + p_{2,n,k-1}
	\\
		\label{lemma.unk.2}
		p_{1,n+1,k} - 4p_{2,n,k} &= 2(2k+3)p_{2,n,k} + p_{2,n,k-1}
\end{align}

\end{lemma}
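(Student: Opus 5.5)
The plan is to prove all five identities by direct manipulation of the factorial definition \eqref{phnk}. Each identity will be reduced to a rational identity in $n,k$ by dividing through by a single double-binomial coefficient that does not vanish in the stated range $0\le k\le n$. For \eqref{ph.n.n} I simply substitute $k=n$ into \eqref{phnk}: the factors $n!$, $(n+h)!$ and $(2(n+h)+1)!$ cancel between numerator and denominator, and $(n-k)!=0!=1$, so $p_{h,n,n}=1$.

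For the remaining identities I first record two elementary ratios, valid whenever the coefficients involved lie in their nonzero range:
\begin{equation*}
\frac{p_{h+1,n,k}}{p_{h,n,k}}=\frac{2n+2h+3}{2k+2h+3},\qquad
\frac{p_{h,n,k+1}}{p_{h,n,k}}=\frac{n-k}{2(k+1)(2k+2h+3)}.
\end{equation*}
Both follow by cancelling factorials. For the first, the factor $2(k+h+1)$ produced by $(2(k+h)+3)!/(2(k+h)+1)!$ cancels against $(k+h+1)$, and the factor $2(n+h+1)$ produced by $(2(n+h)+3)!/(2(n+h)+1)!$ cancels against $(n+h+1)$.

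With $h=1$, the right-hand side of \eqref{lemma.p2nk} becomes
\begin{equation*}
p_{1,n,k}\left(1+\frac{2(n-k)}{2k+5}\right)=p_{1,n,k}\,\frac{2n+5}{2k+5},
\end{equation*}
which equals $p_{2,n,k}$ by the first ratio. The same computation with $h=2$, where the factor is $(2n+7)/(2k+7)$, gives \eqref{lemma.p3nk}. At $k=n$ the term $p_{h,n,n+1}$ vanishes by the convention in \eqref{phnk}, and the identity reduces to \eqref{ph.n.n}, consistent with the ratio formula since $n-k=0$.

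For \eqref{lemma.p1np1k} I will compare everything with $p_{2,n,k}$. A factorial computation gives
\begin{equation*}
\frac{p_{1,n+1,k}}{p_{2,n,k}}=\frac{2(n+1)(2k+5)}{n+1-k},\qquad
\frac{p_{2,n,k-1}}{p_{2,n,k}}=\frac{2k(2k+5)}{n+1-k}.
\end{equation*}
Then
\begin{equation*}
2(2k+5)+\frac{2k(2k+5)}{n+1-k}=\frac{2(2k+5)(n+1)}{n+1-k},
\end{equation*}
which is exactly the first ratio, proving the identity. The boundary case $k=0$ needs a separate check, since $p_{2,n,-1}=0$ by convention; there both sides reduce to $2\cdot 5\,p_{2,n,0}=p_{1,n+1,0}$, consistent with the ratio at $k=0$. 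Finally, \eqref{lemma.unk.2} follows immediately by subtracting $4p_{2,n,k}$ from both sides of \eqref{lemma.p1np1k}, since $2(2k+5)-4=2(2k+3)$.

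The only real obstacle is bookkeeping: correctly simplifying the quotients of factorials such as $(2(k+h)+3)!/(2(k+h)+1)!$, and handling the endpoint cases $k=0$ and $k=n$, where the zero conventions in \eqref{phnk} replace the ratio formulas.
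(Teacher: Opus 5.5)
Your proposal is correct and follows essentially the paper's route: direct cancellation of factorials in the definition \eqref{phnk}. In both, \eqref{lemma.p2nk} and \eqref{lemma.p3nk} reduce to the factor $(2n+2h+3)/(2k+2h+3)$, and \eqref{lemma.unk.2} follows by subtracting $4p_{2,n,k}$. The differences are only in bookkeeping: you state the cancellations as two general ratio formulas in $h$, and you normalize \eqref{lemma.p1np1k} by $p_{2,n,k}$ rather than using the paper's Pascal-type step $\frac{1}{(n-k)!\,k!}+\frac{1}{(n+1-k)!\,(k-1)!}=\frac{n+1}{(n+1-k)!\,k!}$. Your explicit checks of the endpoints $k=0$ and $k=n$ are a small gain in care over the paper's proof.
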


\begin{proof}(\textbf{Eq.\eqref{ph.n.n}}).
	Substituting $k=n $ into \eqref{phnk} gives \eqref{ph.n.n}.
\end{proof}

\begin{proof}(\textbf{Eq.\eqref{lemma.p2nk}}).
	We start from the right hand side of \eqref{lemma.p2nk} and reach the left hand side.
	\begin{align*}
		\mathrm{RHS} &=  \frac{n!(k+1)!(2(n+1)+1)!}{(n-k)!k!(n+1)!(2(k+1)+1)!} \\
		&+ 4(k+1)\frac{n!(k+2)!(2(n+1)+1)!}{(n-k-1)!(k+1)!(n+1)!(2(k+2)+1)!} \\
		&= \frac{n!(k+1)!(2n+3)!}{(n-k)!k!(n+1)!(2k+3)!}\left(1+\frac{2(n-k)}{(2k+5)}\right) \\
		&=
		\frac{n!(k+1)!(2n+3)!}{(n-k)!k!(n+1)!(2k+3)!}\left(\frac{(2n+5)}{(2k+5)}\right) \\
		&=
		\frac{n!(k+1)!(2n+3)!}{(n-k)!k!(n+1)!(2k+3)!}\left(\frac{(2n+5)}{(2k+5)}\right)\frac{(2k+4)}{(2k+4)}\frac{(2n+4)}{(2n+4)} \\
		&=
		\frac{n!(k+2)!(2n+5)!}{(n-k)!k!(n+2)!(2k+5)!} = p_{2,n,k} = \mathrm{LHS}.
	\end{align*}
\end{proof}
\begin{proof}(\textbf{Eq.\eqref{lemma.p3nk}}).
	We start from the right hand side of \eqref{lemma.p3nk} and reach the left hand side.
	\begin{align*}
		\mathrm{RHS} &= \frac{4(k+1)n!(k+3)!(2n+5)!}{(n-k-1)!(n+2)!(k+1)!(2k+7)!} \\
		&+ \frac{n!(k+2)!(2n+5)!}{(n-k)!(n+2)!k!(2k+5)!} \\
		&= \frac{n!(k+2)!(2n+5)!}{(n-k)!(n+2)!k!(2k+5)!}\left(\frac{2(n-k)}{(2k+7)}+1\right) \\
		&= \frac{n!(k+2)!(2n+5)!}{(n-k)!(n+2)!k!(2k+5)!}\left(\frac{2n+7}{2k+7}\right) \\
		&=  \frac{n!(k+2)!(2n+5)!}{(n-k)!(n+2)!k!(2k+5)!}\left(\frac{2n+7}{2k+7}\right)\frac{(2n+6)}{(2n+6)}\frac{(2k+6)}{(2k+6)} \\		
		&= \frac{n!(k+2)!(2n+7)!}{(n-k)!(n+2)!k!(2k+7)!}\left(\frac{2k+6}{2n+6}\right) \\
		&= \frac{n!(k+3)!(2n+7)!}{(n-k)!(n+3)!k!(2k+7)!} = p_{3,n,k} = \mathrm{LHS}.
	\end{align*}
\end{proof}
\begin{proof}(\textbf{Eq.\eqref{lemma.p1np1k}}).
	We start from the right hand side of \eqref{lemma.p1np1k} and reach the left hand side.
	\begin{align*}
		\mathrm{RHS} &= \frac{2(2k+5)n!(k+2)!(2n+5)!}{(n-k)!(n+2)!k!(2k+5)!} \\
		&+ \frac{n!(k+1)!(2n+5)!}{(n+1-k)!(n+2)!(k-1)!(2k+3)!} \\
		&= \frac{n!(k+1)!(2n+5)!}{(n+2)!(2k+3)!}\left(\frac{1}{(n-k)!k!}+\frac{1}{(n+1-k)!(k-1)!}\right) \\
		&= \frac{n!(k+1)!(2n+5)!}{(n+1-k)!(n+2)!k!(2k+3)!}(n+1) \\
		&= \frac{(n+1)!(k+1)!(2n+5)!}{(n+1-k)!(n+2)!k!(2k+3)!} = p_{1,n+1,k} = \mathrm{LHS}.
	\end{align*}
\end{proof}
\begin{proof}(\textbf{Eq.\eqref{lemma.unk.2}}).
	Subtracting $4p_{2,n,k} $ from both sides of \eqref{lemma.p1np1k} gives \eqref{lemma.unk.2}.
\end{proof}

\begin{lemma}
	Define both sides of \eqref{lemma.unk.2} as $u_{n,k} $,
	\begin{align}
		\label{lemma.unk}
		u_{n,k} &= p_{1,n+1,k} - 4p_{2,n,k} \\
		\label{lemma.unk.0}
		u_{n,k} &= 2(2k+3)p_{2,n,k} + p_{2,n,k-1},
	\end{align}
	then
	\begin{equation}
\label{lemma.p2np1k}
p_{2,n+1,k} = 4(k+1)u_{n,k+1} + 4p_{3,n,k} + u_{n,k}.
	\end{equation}
\end{lemma}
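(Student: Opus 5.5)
The plan is to avoid working with the factorial expression \eqref{phnk} directly, and instead reduce \eqref{lemma.p2np1k} to the two identities \eqref{lemma.p2nk} and \eqref{lemma.p3nk} already proved. For $u_{n,k}$ I will use the form \eqref{lemma.unk}, namely $u_{n,k}=p_{1,n+1,k}-4p_{2,n,k}$, and not \eqref{lemma.unk.0}.

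First I substitute \eqref{lemma.unk} for both $u_{n,k+1}$ and $u_{n,k}$ in the right-hand side of \eqref{lemma.p2np1k}. Then I regroup the terms into two brackets:
\begin{equation*}
\mathrm{RHS}=\Big[p_{1,n+1,k}+4(k+1)p_{1,n+1,k+1}\Big]+4\Big[p_{3,n,k}-4(k+1)p_{2,n,k+1}-p_{2,n,k}\Big].
\end{equation*}

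The first bracket is exactly the right-hand side of \eqref{lemma.p2nk} with $n$ replaced by $n+1$, so it equals $p_{2,n+1,k}$. The second bracket vanishes by \eqref{lemma.p3nk}. This gives $\mathrm{RHS}=p_{2,n+1,k}$, which is the claim.

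The only point needing care is the range of indices, since the computation is otherwise linear bookkeeping.
\begin{itemize}
\item The shifted use of \eqref{lemma.p2nk} needs $0\le k\le n+1$.
\item Its use at the top value $k=n+1$ involves $p_{1,n+1,n+2}$, and \eqref{lemma.p3nk} at $k=n$ involves $p_{2,n,n+1}$.
\end{itemize}
Both of these are zero by the convention in \eqref{phnk}. I will check that the earlier proofs of \eqref{lemma.p2nk} and \eqref{lemma.p3nk} remain valid at $k=n$: the factor $2(n-k)/(2k+5)$, respectively $2(n-k)/(2k+7)$, is zero there, so the algebra is consistent with the zero convention.

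For $k=n+1$ in the present lemma, \eqref{lemma.p3nk} is outside its stated range $k\le n$. In that case I verify the identity directly. Here $p_{3,n,n+1}=p_{2,n,n+2}=p_{2,n,n+1}=0$, so the second bracket is trivially zero, and the first bracket reduces to $p_{1,n+1,n+1}=1=p_{2,n+1,n+1}$ by \eqref{ph.n.n}.

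I expect this boundary check to be the only real obstacle.
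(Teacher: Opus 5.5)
Your proof is correct, but it takes a different route from the paper.

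\textbf{The paper's route.} It works directly from the factorial formula \eqref{phnk}. It writes each of $4(k+1)u_{n,k+1}$, $4p_{3,n,k}$ and $u_{n,k}$ as an explicit rational multiple of $p_{2,n+1,k}$, with coefficients
\[
\frac{2(n+1-k)((2k+5)n+4k+7)}{(n+1)(2n+7)(2k+7)},\qquad \frac{2(n+1-k)}{(n+1)(2k+7)},\qquad \frac{(2k+3)(n+1)+2k}{(2n+7)(n+1)}.
\]
It then asserts that these three coefficients sum to $1$, without showing the algebra.

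\textbf{Your route.} You never touch the factorials. Substituting $u_{n,k}=p_{1,n+1,k}-4p_{2,n,k}$ reduces the claim to two instances of a single raising recurrence,
\[
p_{h+1,n,k}=p_{h,n,k}+4(k+1)p_{h,n,k+1}.
\]
One instance is at $h=1$ with $n$ replaced by $n+1$, which is \eqref{lemma.p2nk}. The other is at $h=2$, which is \eqref{lemma.p3nk}.

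\textbf{Comparison.} Your argument is shorter and replaces an unverified rational-function identity with linear bookkeeping. It needs neither \eqref{lemma.unk.0} nor \eqref{lemma.p1np1k}. It also shows that the lemma follows formally from the definition of $u_{n,k}$ and one Pascal-type recurrence in $h$. In exchange, it relies on those earlier identities holding up to the boundary indices. You handle this correctly via the zero convention and \eqref{ph.n.n}.

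The boundary case $k=n+1$ is the top index actually used when \eqref{lemma.p2np1k} is applied in the induction for \eqref{curl.exp.a.b.2n}. There, only your second bracket lies outside a previously stated range; the first is already covered by the shifted \eqref{lemma.p2nk}.

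The paper's computation is self-contained, and its first two coefficients visibly vanish at $k=n+1$ through the factor $(n+1-k)$, while the third reduces to $1$ there.
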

\begin{proof}(\textbf{Eq.\eqref{lemma.p2np1k}}).
	Because
	\begin{align*}
		4(k+1)u_{n,k+1} &= p_{2,n+1,k}\frac{2(n+1-k)((2k+5)n+4k+7)}{(n+1)(2n+7)(2k+7)}, \\
		4p_{3,n,k} &= p_{2,n+1,k}\frac{2(n+1-k)}{(n+1)(2k+7)}, \\ 
		u_{n,k} &= p_{2,n+1,k}\frac{(2k+3)(n+1)+2k}{(2n+7)(n+1)},
	\end{align*}
	we have
	\begin{equation*}
		4(k+1)u_{n,k+1} + 4p_{3,n,k} + u_{n,k} = p_{2,n+1,k} f ,
	\end{equation*}
	where
	\begin{equation*}
		\begin{split}
			&f = \\
			&
			\frac{2(n+1-k)((2k+5)n+4k+7)}{(n+1)(2n+7)(2k+7)}+\frac{2(n+1-k)}{(n+1)(2k+7)}+\frac{(2k+3)(n+1)+2k}{(2n+7)(n+1)}
		\end{split}.
	\end{equation*}
	Because
	\begin{equation*}
		\begin{split}
		&\frac{2(n+1-k)((2k+5)n+4k+7)}{(n+1)(2n+7)(2k+7)}+\frac{2(n+1-k)}{(n+1)(2k+7)}+\frac{(2k+3)(n+1)+2k}{(2n+7)(n+1)} \\
		 &= 1,	
		\end{split}
	\end{equation*}
	we have
	\begin{equation*}
		4(k+1)u_{n,k+1} + 4p_{3,n,k} + u_{n,k} = p_{2,n+1,k}.
	\end{equation*}
\end{proof}

\begin{lemma} 
\begin{equation}
	\label{lemma.gsrc.sum.1}
	\sum_{k=0}^{n+1}\frac{(-1)^k}{2^{n+1-k}}\left( p_{2,n,k-1}+4p_{1,n,k} \right)s^k
	= \sum_{k=0}^{n}\frac{(-1)^k}{2^{n-k}}\left( 2p_{1,n,k} - s \, p_{2,n,k}\right)s^k ,
\end{equation}
where $n \ge 0, s \in \mathbb{R} $.	
\end{lemma}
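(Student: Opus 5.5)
The plan is to prove \eqref{lemma.gsrc.sum.1} by comparing coefficients of $s^k$ on both sides. Since both sides are polynomials in $s$, this amounts to a termwise rearrangement, and no recurrence among the $p_{h,n,k}$ such as \eqref{lemma.p2nk} or \eqref{lemma.p1np1k} should be needed. The only inputs are the convention in \eqref{phnk} that $p_{h,n,k}=0$ for $k<0$ or $k>n$, and a shift of the summation index.

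First, expand the left-hand side. The coefficient of $s^k$, for $0\le k\le n+1$, is
\[
(-1)^k 2^{k-n-1}\bigl(p_{2,n,k-1}+4p_{1,n,k}\bigr).
\]
At the endpoints the convention simplifies this: for $k=0$ the term $p_{2,n,-1}$ vanishes, and for $k=n+1$ the term $p_{1,n,n+1}$ vanishes.

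Next, split the right-hand side into two sums. The first is
\[
\sum_{k=0}^{n}(-1)^k 2^{k-n+1}p_{1,n,k}s^k
=\sum_{k=0}^{n}(-1)^k 2^{k-n-1}\,4p_{1,n,k}\,s^k .
\]
The second is
\[
-\sum_{k=0}^{n}(-1)^k 2^{k-n}p_{2,n,k}s^{k+1}.
\]
Substituting $k\mapsto k-1$ in the second sum turns it into
\[
\sum_{k=1}^{n+1}(-1)^{k}2^{k-1-n}p_{2,n,k-1}s^{k},
\]
because the sign $-(-1)^{k-1}$ equals $(-1)^k$.

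Finally, extend both right-hand sums to the common range $0\le k\le n+1$. This adds only zero terms: $p_{1,n,n+1}=0$ in the first sum and $p_{2,n,-1}=0$ in the second. After this extension the coefficient of $s^k$ on the right is $(-1)^k2^{k-n-1}\bigl(4p_{1,n,k}+p_{2,n,k-1}\bigr)$, which matches the left-hand side coefficient for every $k$, and the identity follows.

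The only place needing care is the bookkeeping at the boundary indices $k=0$ and $k=n+1$, where the zero conventions of \eqref{phnk} must be invoked explicitly. Apart from that, the argument is a direct calculation.
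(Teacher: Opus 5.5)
Your proposal is correct and is essentially the paper's proof. Both use only the vanishing conventions $p_{2,n,-1}=0$ and $p_{1,n,n+1}=0$ from \eqref{phnk} and the index shift $k\mapsto k-1$ in the $p_{2,n,\cdot}$ sum; the paper simply runs the rearrangement from the left-hand side to the right-hand side, whereas you go the other way by comparing coefficients.
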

\begin{proof}
	We start from the left hand side of \eqref{lemma.gsrc.sum.1} and reach its right hand side.
	Because
	\begin{equation*}
		p_{2,n,k-1}|_{k=0} = 0, p_{1,n,k}|_{k=n+1}=0,
	\end{equation*}
	we have
	\begin{align*}
		\mathrm{LHS} &= \sum_{k=1}^{n+1}\frac{(-1)^k}{2^{n+1-k}}\left( p_{2,n,k-1} \right)s^k 
		+ \sum_{k=0}^{n}\frac{(-1)^k}{2^{n+1-k}}\left( 4p_{1,n,k} \right)s^k \\
	&= \sum_{k=0}^{n}\frac{(-1)^{k+1}}{2^{n-k}}\left( p_{2,n,k} \right)s^{k+1} 
	+ \sum_{k=0}^{n}\frac{(-1)^k}{2^{n-k}}\left( 2p_{1,n,k} \right)s^k \\
	&= \sum_{k=0}^{n}\frac{(-1)^k}{2^{n-k}}\left( 2p_{1,n,k} - s \, p_{2,n,k}\right)s^k = \mathrm{RHS}.
	\end{align*}
\end{proof}

\begin{lemma} Using symbols defined in Table.\ref{tbl.symbols}, we can calculate the following cross-products: 
	\begin{align}
	\label{gs.times.b}
g \vec{s} \times \vec{b} &= w \vec{c} - \vec{d} \, ,
\\
	\label{s.times.d}
\vec{s} \times \vec{d} &= 0 \, ,
\\
\label{s.times.c}
\vec{s} \times \vec{c} &= -\vec{b} \, ,
\\
	\label{gs.times.h}
g\vec{s} \times \vec{h} &= -gw\vec{z} + g\vec{c} \, ,
\\
	\label{gs.times.z}
g\vec{s} \times \vec{z} &= \vec{h} \, ,
\\
\label{gs.times.e}
g\vec{s} \times \vec{e} &= 0 \,.
	\end{align}
\end{lemma}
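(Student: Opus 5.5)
The plan is to verify each of the six identities by direct componentwise evaluation of the cross products. I will use only the definitions in Table~\ref{tbl.symbols}: $\vec{s}=(x,y,z)^T$, $\vec{z}=(0,0,1)^T$, $\vec{c}=(x,0,-z)^T$, $\vec{f}=(y,-x,0)^T$, $\vec{b}=(yz,-2zx,xy)^T$, $\vec{d}=g(x^2-z^2)\vec{s}$, $\vec{h}=g\vec{f}$, $\vec{e}=gz\vec{s}$, $w=gr^2$, $r^2=x^2+y^2+z^2$. No earlier lemma is needed. Each identity is a polynomial identity in $x,y,z,g$, so it suffices to compute the left-hand side with the determinant formula $\vec{u}\times\vec{v}=(u_2v_3-u_3v_2,\;u_3v_1-u_1v_3,\;u_1v_2-u_2v_1)$ and compare coefficients.

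I will begin with the two identities that follow without computation. For \eqref{s.times.d} and \eqref{gs.times.e}, both $\vec{d}$ and $\vec{e}$ are scalar multiples of $\vec{s}$, so $\vec{s}\times\vec{d}=g(x^2-z^2)\,\vec{s}\times\vec{s}=0$, and likewise $g\vec{s}\times\vec{e}=0$.

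Next come the identities that need a single cross product.
\begin{itemize}
\item For \eqref{s.times.c}: $\vec{s}\times\vec{c}=(y(-z)-z\cdot 0,\; zx-x(-z),\; x\cdot 0-yx)=(-yz,2zx,-xy)=-\vec{b}$.
\item For \eqref{gs.times.z}: $\vec{s}\times\vec{z}=(y,-x,0)=\vec{f}$, so $g\vec{s}\times\vec{z}=g\vec{f}=\vec{h}$.
\item For \eqref{gs.times.b}: I will compute $\vec{s}\times\vec{b}=(x^2y+2z^2x,\; yz^2-x^2y,\; -2zx^2-y^2z)$. I then rewrite this as $r^2\vec{c}-(x^2-z^2)\vec{s}$, checking each component against $(x^2+y^2+z^2)(x,0,-z)-(x^2-z^2)(x,y,z)$. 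Multiplying by $g$ gives $w\vec{c}-\vec{d}$.
\end{itemize}

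The step I expect to be the main obstacle is \eqref{gs.times.h}, because its right-hand side mixes vectors of different polynomial degree. The left-hand side is $g\vec{s}\times\vec{h}=g^2\,\vec{s}\times\vec{f}=g^2(zx,\,zy,\,-(x^2+y^2))$, which is homogeneous of degree two in $(x,y,z)$ with a factor $g^2$. As printed, the right-hand side is $-gw\vec{z}+g\vec{c}=(gx,\,0,\,-g^2r^2-gz)$. I will compare these componentwise as the statement demands. The first components are $g^2zx$ versus $gx$ and the second are $g^2zy$ versus $0$. These are not equal as polynomials, so the printed identity cannot be confirmed by this computation.

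The direct calculation instead yields $g^2(zx,zy,-(x^2+y^2))=-gw\vec{z}+g^2z\vec{s}$. The case-study derivation for $\vec{h}$ will need exactly this expression, and I would record it explicitly at that point. I would report that the stated form of \eqref{gs.times.h} fails the componentwise check rather than assert it.
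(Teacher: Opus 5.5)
Your componentwise computation is the verification the paper intends (it states the lemma without proof), and your diagnosis of \eqref{gs.times.h} is correct. Since $\vec{e}=gz\vec{s}$, your result $-gw\vec{z}+g^2z\vec{s}$ equals $-gw\vec{z}+g\vec{e}$, so the printed $g\vec{c}$ is a typo for $g\vec{e}$. This corrected form is what the paper actually substitutes when it invokes \eqref{gs.times.h} in the proof of \eqref{curl.exp.a.z.2n2}, and the same $\vec{c}\to\vec{e}$ typo recurs in \eqref{curl.wh.k}.

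One arithmetic slip needs fixing: the first component of $\vec{s}\times\vec{b}$ is $y\cdot xy-z\cdot(-2zx)=xy^2+2xz^2$, not $x^2y+2z^2x$. Only the corrected value matches $r^2x-(x^2-z^2)x$.
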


\begin{lemma} Using symbols defined in Table.\ref{tbl.symbols}, we can calculate the following curls:
\begin{align}
	\label{curlwb.0}
	\nabla \times \vec{b} &= 3\vec{c} \, ,
\\
	\label{curlwb.k}
	\nabla \times w^k \vec{b} &= (2k+3)w^k \vec{c} - 2kw^{k-1}\vec{d}; \forall k>0 \, ,
\\
	\label{curl.c.0}
	\nabla \times \vec{c} &= 0 \, ,
\\
	\label{curlwc.k}
	\nabla \times w^k \vec{c} &= -2kgw^{k-1} \vec{b}; \forall k > 0 \, ,
\\
	\label{curlwd}
	\nabla \times w^k \vec{d} &= 2gw^k \vec{b}; \forall k \ge 0 \, ,
\\
	\label{curl.h}
	\nabla \times \vec{h} &= -2g \vec{z} \, ,
\\
	\label{curl.wh.k}
	\nabla \times w^k\vec{h} &= -2g(k+1)w^k\vec{z} + 2gkw^{k-1} \vec{c}; k>0 \, ,
\\
	\label{curl.z}
	\nabla \times \vec{z} &= 0 \, ,
\\
	\label{curl.wz.k}
	\nabla \times w^k\vec{z} &= 2kw^{k-1}\vec{h}, \forall k > 0 \, ,
\\
	\label{curl.we.k}
	\nabla \times w^k\vec{e} &= -w^k\vec{h}, \forall k \ge 0 \, .
\end{align}
\end{lemma}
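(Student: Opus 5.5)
The plan is to derive every identity in the lemma from one product rule, reusing the cross products already computed in the preceding lemma. The only extra ingredient is the gradient of $w=gr^2$, namely $\nabla w = 2g\vec{s}$. From this, for any constant-free vector field $V$ and $k\ge 1$,
\begin{equation*}
\nabla\times\left(w^kV\right) = k w^{k-1}\,\nabla w\times V + w^k\,\nabla\times V = 2k\,w^{k-1}\left(g\vec{s}\times V\right) + w^k\,\nabla\times V .
\end{equation*}
Each ``$w^k$'' identity then reduces to two things: the corresponding $k=0$ curl, and the matching entry among \eqref{gs.times.b}--\eqref{gs.times.e}.

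First I will establish the base curls \eqref{curlwb.0}, \eqref{curl.c.0}, \eqref{curl.h} and \eqref{curl.z} by direct componentwise differentiation of the polynomial fields in Table~\ref{tbl.symbols}.
\begin{itemize}
\item For $\vec{b}=(yz,-2zx,xy)$ the components of the curl are $x+2x$, $y-y$ and $-2z-z$, which gives $3\vec{c}$.
\item The field $\vec{c}=\nabla\left(\tfrac12(x^2-z^2)\right)$ is a gradient, so its curl vanishes.
\item For $\vec{h}=g(y,-x,0)$ only the third component survives, giving $-2g\vec{z}$.
\item $\vec{z}$ is constant, so its curl is zero.
\end{itemize}

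Next I will substitute into the displayed product rule for each $k\ge1$ identity.
\begin{itemize}
\item Equation \eqref{curlwb.k}: using \eqref{gs.times.b} and \eqref{curlwb.0}, we get $2kw^{k-1}(w\vec{c}-\vec{d})+3w^k\vec{c}$.
\item Equation \eqref{curlwc.k}: this follows from \eqref{s.times.c} together with \eqref{curl.c.0}.
\item Equation \eqref{curl.wh.k}: using \eqref{gs.times.h} and \eqref{curl.h}, we get $2kw^{k-1}(-gw\vec{z}+g\vec{c})-2gw^k\vec{z}$.
\item Equation \eqref{curl.wz.k}: this follows from \eqref{gs.times.z} together with \eqref{curl.z}.
\end{itemize}
In each case, collecting powers of $w$ gives the stated right-hand side.

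The two identities valid for all $k\ge0$, namely \eqref{curlwd} and \eqref{curl.we.k}, are cleaner to handle by a different route. Both fields are scalar multiples of the radial vector: $w^k\vec{d}=\psi\vec{s}$ with $\psi=g w^k(x^2-z^2)$, and $w^k\vec{e}=\psi\vec{s}$ with $\psi=g z w^k$. Since $\nabla\times\vec{s}=0$, I will use $\nabla\times(\psi\vec{s})=\nabla\psi\times\vec{s}$, and any part of $\nabla\psi$ proportional to $\vec{s}$ then drops out.
\begin{itemize}
\item For \eqref{curlwd}, the surviving part of $\nabla\psi$ is $2g w^k\vec{c}$. By \eqref{s.times.c} this yields $2g w^k\vec{b}$.
\item For \eqref{curl.we.k}, the surviving part is $g w^k\vec{z}$. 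By \eqref{gs.times.z} this yields $-w^k\vec{h}$.
\end{itemize}
This route also covers $k=0$ without a separate base case, which is consistent with \eqref{gs.times.e} and \eqref{s.times.d}.

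The argument is pure bookkeeping, so the main obstacle is sign and orientation control in the cross products. The places to check carefully are the order of factors in $\nabla\psi\times\vec{s}$ versus $\vec{s}\times\nabla\psi$, and the factor $2$ coming from $\nabla w=2g\vec{s}$. I would therefore verify each final identity once against an explicit component computation at a generic point.
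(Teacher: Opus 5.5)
\textbf{The gap is the step for \eqref{curl.wh.k}.} The paper states this lemma without proof, so direct computation is what is intended, and your overall method is sound. That method is the product rule $\nabla\times(w^kV)=2kw^{k-1}\,g\vec s\times V+w^k\nabla\times V$, together with $\nabla\times(\psi\vec s)=\nabla\psi\times\vec s$ for the two radial fields. Followed your way, \eqref{curlwb.0}, \eqref{curlwb.k}, \eqref{curl.c.0}, \eqref{curlwc.k}, \eqref{curlwd}, \eqref{curl.h}, \eqref{curl.z}, \eqref{curl.wz.k} and \eqref{curl.we.k} all come out correctly. This includes the orientations $\vec c\times\vec s=\vec b$ and $\vec z\times\vec s=-\vec s\times\vec z$.

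You derive \eqref{curl.wh.k} by citing \eqref{gs.times.h}, but that cross product is misprinted in the paper. Since $\vec s\times\vec f=(xz,\,yz,\,-x^2-y^2)$, one has
\[
g\vec s\times\vec h=g^2(xz,\,yz,\,-x^2-y^2)=-gw\vec z+g\vec e,
\]
not $-gw\vec z+g\vec c$. The $y$-components alone differ: $g^2yz$ versus $0$.

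Consequently \eqref{curl.wh.k} as stated is false. At $k=1$, direct differentiation gives
\[
\nabla\times(w\vec h)=2g^2(xz,\,yz,\,-r^2-x^2-y^2)=-4gw\vec z+2g\vec e .
\]
A degree count confirms that $\vec c$ cannot appear at all: $\nabla\times(w^k\vec h)$ is homogeneous of degree $2k$, while $w^{k-1}\vec c$ has degree $2k-1$. Your claim that ``collecting powers of $w$ gives the stated right-hand side'' holds only because it inherits the misprint.

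The correct identity is
\[
\nabla\times w^k\vec h=-2g(k+1)w^k\vec z+2gkw^{k-1}\vec e,
\]
which reduces to \eqref{curl.h} at $k=0$. This $\vec e$ form is also the one the paper actually uses in the proof of \eqref{curl.exp.a.z.2n1}--\eqref{curl.exp.a.z.2n2}. If you replace $\vec c$ by $\vec e$ in both \eqref{gs.times.h} and \eqref{curl.wh.k}, your product-rule argument goes through unchanged.

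The practical lesson is not to take the preceding cross-product lemma on faith. The component check you planned as a final safeguard should also be applied to the identities you cite as inputs.
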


\subsection{\textbf{Theorems}}
All symbols not otherwise specified are defined in Table.\ref{tbl.symbols}.
\begin{thm} For all $F \in C^{\infty}(\mathbb{R}^3,\mathbb{R}^3)$, 
	\begin{equation}
		\label{curl.exp.a.F}
	\nabla \times	\exp(-ar^2) F = \exp(-ar^2) (\nabla\times-g\vec{s}\times) F \, .
	\end{equation}
\end{thm}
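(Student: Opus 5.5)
The identity \eqref{curl.exp.a.F} is a special case of the product rule for the curl of a scalar times a vector field. I would first recall, or verify componentwise, the standard identity
\begin{equation*}
\nabla\times(\phi F) = \phi\,\nabla\times F + (\nabla\phi)\times F ,
\end{equation*}
valid for any $\phi\in C^{1}(\mathbb{R}^3,\mathbb{R})$ and $F\in C^{1}(\mathbb{R}^3,\mathbb{R}^3)$. To verify it, write the $x$-component as $\partial_y(\phi F_z)-\partial_z(\phi F_y)$ and expand with the ordinary one-variable product rule. This gives $\phi(\partial_y F_z-\partial_z F_y)+(\partial_y\phi\,F_z-\partial_z\phi\,F_y)$, which is exactly the $x$-component of the right-hand side. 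The $y$- and $z$-components follow by cyclic permutation of $(x,y,z)$.

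Next, I take $\phi=\exp(-ar^2)$ with $r^2=x^2+y^2+z^2$. Its gradient is $\nabla\phi=-2a\exp(-ar^2)(x,y,z)^T$, which in the notation of Table~\ref{tbl.symbols} reads $\nabla\phi=-g\exp(-ar^2)\vec{s}$ with $g=2a$. Substituting into the product rule gives
\begin{equation*}
\nabla\times\exp(-ar^2)F=\exp(-ar^2)\nabla\times F-g\exp(-ar^2)\,\vec{s}\times F .
\end{equation*}
Factoring out the scalar $\exp(-ar^2)$ then yields $\exp(-ar^2)(\nabla\times-g\vec{s}\times)F$, which is \eqref{curl.exp.a.F}.

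There is no real obstacle here; the only care needed is the sign convention in $(\nabla\phi)\times F$ versus $F\times\nabla\phi$. I would fix it by checking one component explicitly, since the sign of the $g\vec{s}\times$ term is what later drives the iterated formulas such as \eqref{gs.times.b} and \eqref{gs.times.z}. Because $F$ is assumed smooth, all the derivatives involved exist.

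As a remark that the subsequent appendix theorems will likely use, the result can be iterated by induction on $n$. The operator identity gives $\curl{n}\exp(-ar^2)F=\exp(-ar^2)(\nabla\times-g\vec{s}\times)^nF$, because each application of $\nabla\times$ to $\exp(-ar^2)G$, with $G=(\nabla\times-g\vec{s}\times)^{n-1}F$ smooth, reproduces the same factor.
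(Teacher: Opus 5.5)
Your proof is correct and is essentially the paper's argument: the paper expands each component of $\nabla\times(\exp(-ar^2)F)$ with the one-variable product rule and recognizes $\nabla\times F-2a\,\vec{s}\times F$. That is exactly your identity $\nabla\times(\phi F)=\phi\,\nabla\times F+\nabla\phi\times F$ specialized to $\nabla\phi=-g\exp(-ar^2)\vec{s}$, with the sign handled the same way.
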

\begin{proof}
We start from the left hand side of \eqref{curl.exp.a.F}, applying curl operation:
\begin{align*}
	\mathrm{LHS} &= \nabla\times \exp(-ar^2)F = \nabla\times \begin{bmatrix}
		\exp(-ar^2)F_x \\
		\exp(-ar^2)F_y \\
		\exp(-ar^2)F_z
	\end{bmatrix} \\
	&= \exp(-ar^2)\begin{bmatrix}
		 \left(\ppf{F_z}{y} - \ppf{F_y}{z} \right) - 2a(yF_z - zF_y) \\
		 \left(\ppf{F_x}{z} - \ppf{F_z}{x} \right) - 2a(zF_x - xF_z) \\
		 \left(\ppf{F_y}{x} - \ppf{F_x}{y} \right) - 2a(xF_y - yF_x)
	\end{bmatrix} \\
	&= \exp(-ar^2)\left( \nabla\times F - 2a \vec{s} \times F \right) \\
	&= \exp(-ar^2) (\nabla\times - g \vec{s}\times)  F \,.
\end{align*}	
\end{proof}

\begin{thm} For $B=\exp(-ar^2)\vec{b} $, the curls of $B$ are given by
	\begin{align}
		\label{curl.exp.a.b.2n}
		\curl{2n} B &= \exp(-ar^2)g^n \sum_{k=0}^{n} \frac{(-1)^k}{2^{n-k}}p_{2,n,k}w^k\vec{b}
\\
		\label{curl.exp.a.b.2n1}
		\curl{2n+1} B &= \exp(-ar^2) g^n \left(\sum_{k=0}^{n+1}\frac{(-1)^k}{2^{n+1-k}}u_{n,k}w^k\vec{c} + \sum_{k=0}^{n}\frac{(-1)^k}{2^{n-k}}p_{3,n,k}w^k\vec{d} \right)
	\end{align}
	\begin{equation*}
		\forall n \ge 0
	\end{equation*}
\end{thm}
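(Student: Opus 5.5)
We prove both formulas at once by induction on $n$, alternating between them. The even formula at $n$ yields the odd formula at $n$ by one curl, and the odd formula at $n$ yields the even formula at $n+1$ by one further curl. Every step rests on \eqref{curl.exp.a.F}: since $\nabla\times(\exp(-ar^2)F)=\exp(-ar^2)(\nabla\times-g\vec{s}\times)F$, we never have to differentiate the Gaussian again. We only apply the operator $\nabla\times-g\vec{s}\times$ to polynomial vector fields of the forms $w^k\vec{b}$, $w^k\vec{c}$ and $w^k\vec{d}$. The cross-product lemma and the curl lemma \eqref{curlwb.0}--\eqref{curlwd} then give closed-form images:
\begin{align*}
(\nabla\times-g\vec{s}\times)\,w^k\vec{b} &= (2k+3)w^k\vec{c}-2kw^{k-1}\vec{d}-w^{k+1}\vec{c}+w^k\vec{d},\\
(\nabla\times-g\vec{s}\times)\,w^k\vec{c} &= -2kg\,w^{k-1}\vec{b}+g\,w^k\vec{b},\\
(\nabla\times-g\vec{s}\times)\,w^k\vec{d} &= 2g\,w^k\vec{b}.
\end{align*}
The first line uses \eqref{gs.times.b}, the second uses $g\vec{s}\times\vec{c}=-g\vec{b}$ from \eqref{s.times.c}, and the third uses \eqref{s.times.d}. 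Terms with $k=0$ are covered by the same expressions, since the factor $2k$ kills them.

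For the base case $n=0$, \eqref{curl.exp.a.b.2n} is trivial because $p_{2,0,0}=1$. The case $n=0$ of \eqref{curl.exp.a.b.2n1} reads $\exp(-ar^2)\big(\tfrac12u_{0,0}\vec{c}-u_{0,1}w\vec{c}+p_{3,0,0}\vec{d}\big)$. Using $u_{0,0}=6$, $u_{0,1}=1$ and $p_{3,0,0}=1$, this matches $3\vec{c}-w\vec{c}+\vec{d}$ from the first line above.

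For the even-to-odd step, apply the first line to each summand of \eqref{curl.exp.a.b.2n}. Collecting the $\vec{d}$ terms gives coefficient of $w^k\vec{d}$ proportional to $p_{2,n,k}+4(k+1)p_{2,n,k+1}$ (after shifting the index in the $-2kw^{k-1}$ term and tracking the factors $2^{-(n-k)}$ and $(-1)^k$). By \eqref{lemma.p3nk} this equals $p_{3,n,k}$. Collecting the $\vec{c}$ terms gives $(2k+3)p_{2,n,k}$ from the first piece and $p_{2,n,k-1}$ from the shifted $w^{k+1}$ piece, with the powers of $2$ and the signs arranged exactly as in \eqref{lemma.unk.0}. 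Their sum is therefore $u_{n,k}$ with the prefactor $(-1)^k/2^{n+1-k}$.

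For the odd-to-even step, apply the second and third lines to \eqref{curl.exp.a.b.2n1}. Every term becomes a multiple of $g\,w^k\vec{b}$, which accounts for the extra factor $g$ in $g^{n+1}$. The coefficient of $w^k\vec{b}$ collects three contributions. The first is $u_{n,k}$ from the term $g\,w^k\vec{b}$. The second comes from $-2(k+1)g\,w^{k}\vec{b}$, obtained from $w^{k+1}\vec{c}$, and carries an extra sign and power of $2$ that turn it into $4(k+1)u_{n,k+1}$. The third is $4p_{3,n,k}$, coming from $2g$ and the relative factor $2$ between $2^{-(n-k)}$ and $2^{-(n+1-k)}$. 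By \eqref{lemma.p2np1k} their sum is exactly $p_{2,n+1,k}$, with the sign $(-1)^k$ and the power $2^{-(n+1-k)}$ as required. The boundary indices need a check: $k=n+1$ in the even formula, and the terms $p_{2,n,-1}$ and $p_{\cdot,n,n+1}$, are handled by the zero conventions in \eqref{phnk}.

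The main obstacle is the bookkeeping of signs and powers of $2$ in these reindexings. The risk is that the relative factor ends up $2(k+1)$ or $8(k+1)$ rather than the $4(k+1)$ demanded by the lemmas. I would verify this by writing out $n=0\to1$ explicitly before trusting the general index shift. The coefficient identities themselves were already supplied by the earlier lemmas.
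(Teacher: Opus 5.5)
Your proposal is correct and is essentially the paper's proof. It uses the same alternating induction (even formula at $n$ $\Rightarrow$ odd formula at $n$ $\Rightarrow$ even formula at $n+1$) driven by \eqref{curl.exp.a.F}, the same curl and cross-product lemmas, and the same closing identities \eqref{lemma.unk.0}, \eqref{lemma.p3nk} and \eqref{lemma.p2np1k}; your index shifts and powers of $2$ check out (the relative factor is indeed $4(k+1)$ in both half-steps), and your three precomputed images of $w^k\vec{b}$, $w^k\vec{c}$, $w^k\vec{d}$ under $\nabla\times-g\vec{s}\times$ are simply a tidier packaging of the paper's regrouping.
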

\begin{proof}
We will induct on n.

\textbf{Base case ($n=0$)}: It is easy to see that for $n=0$ (\ref{curl.exp.a.b.2n}) and (\ref{curl.exp.a.b.2n1}) hold.

\textbf{Inductive Hypothesis ($n=m$)}: Assume for some $m \in \mathbb{Z}_{\ge 0}$ (\ref{curl.exp.a.b.2n}) holds.

\textbf{Inductive Step}: Let’s first prove that (\ref{curl.exp.a.b.2n1}) holds for $n=m $. 
Applying the curl operation on both side of (\ref{curl.exp.a.b.2n}) with $n=m$ and using (\ref{curl.exp.a.F}), we have
\begin{equation*}
	\begin{split}
		&\curl{2m+1} B = \nabla \times \exp(-ar^2) g^m \sum_{k=0}^{m}\frac{(-1)^k}{2^{m-k}}p_{2,m,k}w^k\vec{b} \\
		&= \exp(-ar^2) g^m \left(\sum_{k=0}^{m}\frac{(-1)^k}{2^{m-k}}p_{2,m,k} \nabla \times w^k\vec{b} - \sum_{k=0}^{m}\frac{(-1)^k}{2^{m-k}}p_{2,m,k}w^kg\vec{s} \times\vec{b} \right) 
	\end{split}
\end{equation*}
Apply (\ref{curlwb.0}), (\ref{curlwb.k}) and (\ref{gs.times.b}) and regroup the summations, we have
\begin{equation*}
	\begin{split}
		\curl{2m+1} B &= \exp(-ar^2) g^m \sum_{k=0}^{m+1}\frac{(-1)^k}{2^{m+1-k}}(2(2k+3)p_{2,m,k}+p_{2,m,k-1})w^k\vec{c} \\
		&+ \exp(-ar^2) g^m \sum_{k=0}^{m}\frac{(-1)^k}{2^{m-k}}(4(k+1)p_{2,m,k+1}+p_{2,m,k})w^k\vec{d}
	\end{split}
\end{equation*}
By (\ref{lemma.unk.0}) and (\ref{lemma.p3nk}), we arrive at (\ref{curl.exp.a.b.2n1}) with $n=m$.

Applying the curl operation on both side of (\ref{curl.exp.a.b.2n1}) with $n=m$ and using (\ref{curl.exp.a.F}), we obtain
\begin{equation*}
	\begin{split}
		&\curl{2m+2} B \\
		&= \exp(-ar^2)g^m \left(\sum_{k=0}^{m+1}\frac{(-1)^k}{2^{m+1-k}}u_{m,k}\nabla \times w^k\vec{c} + \sum_{k=0}^{m}\frac{(-1)^k}{2^{m-k}}p_{3,m,k}\nabla \times w^k\vec{d} \right) \\
		&- \exp(-ar^2) g^m \left(\sum_{k=0}^{m+1}\frac{(-1)^k}{2^{m+1-k}}u_{m,k}w^kg\vec{s} \times \vec{c}+ \sum_{k=0}^{m}\frac{(-1)^k}{2^{m-k}}p_{3,m,k}w^kg\vec{s} \times \vec{d} \right)
	\end{split}
\end{equation*}
By (\ref{curl.c.0}), (\ref{curlwc.k}), (\ref{curlwd}), (\ref{s.times.d}) and (\ref{s.times.c}), and regroup the summations, we have
\begin{equation*}
	\curl{2m+2} B = \exp(-ar^2) g^{m+1} \sum_{k=0}^{m+1}\frac{(-1)^k}{2^{m+1-k}}(4(k+1)u_{m,k+1}+4p_{3,m,k}+u_{m,k})w^k\vec{b}
\end{equation*}
By (\ref{lemma.p2np1k}) we arrive at (\ref{curl.exp.a.b.2n}) with $n=m+1 $. 

$\therefore$ By the principle of induction, (\ref{curl.exp.a.b.2n}) and (\ref{curl.exp.a.b.2n1}) hold for all $n\in \mathbb{Z}_{\ge 0}$.

\end{proof}

\begin{thm} For $B=\exp(-ar^2)\vec{z} $, the curls of $B $ are given by
	\begin{align}
		\label{curl.exp.a.z.2n1}
		\curl{2n+1} B&=g^n \exp(-ar^2) \sum_{k=0}^{n} \frac{(-1)^{k+1}}{2^{n-k}}p_{1,n,k}w^k\vec{h}
\\
		\label{curl.exp.a.z.2n2}
		\begin{split}
		\curl{2(n+1)} B &= g^{n+1} \exp(-ar^2)\sum_{k=0}^{n}\frac{(-1)^k}{2^{n-k}}p_{2,n,k}w^k\vec{e} \\
		&+ g^{n+1} \exp(-ar^2)\sum_{k=0}^{n}\frac{(-1)^k}{2^{n-k}} \left(2p_{1,n,k} -wp_{2,n,k} \right)w^k\vec{z}
		\end{split}
	\end{align}
	\begin{equation*}
		\forall n \ge 0
	\end{equation*}
\end{thm}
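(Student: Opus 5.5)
Following the proof for $B=\exp(-ar^2)\vec{b}$, we proceed by induction on $n$, alternating between odd and even orders. The basic tool is \eqref{curl.exp.a.F}, which turns each curl of $\exp(-ar^2)F$ into $\exp(-ar^2)(\nabla\times - g\vec{s}\times)F$. The vector parts are then handled with the curl identities \eqref{curl.h}--\eqref{curl.we.k} and the cross-product identities \eqref{gs.times.h}, \eqref{gs.times.z}, \eqref{gs.times.e}.

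\textbf{Base case, odd order.} By \eqref{curl.exp.a.F} with $F=\vec{z}$, together with \eqref{curl.z} and \eqref{gs.times.z}, we get $\nabla\times B=-\exp(-ar^2)\vec{h}$. Since $p_{1,0,0}=1$, this is \eqref{curl.exp.a.z.2n1} for $n=0$.

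\textbf{Step from odd order $2n+1$ to even order $2n+2$.} Assume \eqref{curl.exp.a.z.2n1} holds for $n=m$ and apply one more curl.
\begin{itemize}
\item For each $k$, $\nabla\times w^k\vec{h}$ is given by \eqref{curl.h} and \eqref{curl.wh.k}. This produces $-2g(k+1)w^k\vec{z}$ and $2gkw^{k-1}\vec{c}$.
\item The cross product $g\vec{s}\times w^k\vec{h}$ is given by \eqref{gs.times.h}. This produces $-gw^{k+1}\vec{z}+gw^k\vec{c}$.
\item The $\vec{c}$ terms do not appear in \eqref{curl.exp.a.z.2n2}, so they must be rewritten. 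Since $g\vec{c}=g(x,0,-z)^T$, we try $g\vec{c}=g\vec{s}-\vec{e}\cdot(\ldots)$ in the form $g\vec{c}=wg\ldots$. Checking directly: $g z\vec{s}=\vec{e}$, and $\vec{c}+\vec{z}z$-type combinations give $g\vec{c}=-\vec{e}+g(x,0,0)$-type remainders. This does not close in general, so the right relation must be found by computing: $g\vec{s}-\vec{e}/z$ is not polynomial. I will expect the $\vec{c}$ terms to cancel exactly between the curl part and the cross-product part.
\item That cancellation can be checked by reindexing. The curl part contributes $\sum_k c_k\,2gk\,w^{k-1}\vec{c}$ and the cross part contributes $-\sum_k c_k\,g\,w^k\vec{c}$, where $c_k=(-1)^{k+1}p_{1,m,k}/2^{m-k}$. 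These do not obviously cancel.
\item The honest plan is therefore to verify \eqref{curl.exp.a.z.2n2} for $n=0$ by direct computation. This determines how $\vec{c}$ is expressed through $\vec{e}$, $\vec{z}$, $w$. In the $n=0$ case, the $\vec{c}$ term from $-g\vec{s}\times\vec{h}$ must combine as $-g\vec{c}=g\vec{e}/?$, which fixes the relation. I will then carry that relation through the general index $k$.
\end{itemize}

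\textbf{Step from even order $2n+2$ to odd order $2n+3$.} Assume \eqref{curl.exp.a.z.2n2} holds for $n=m$ and apply a curl.
\begin{itemize}
\item By \eqref{curl.we.k} and \eqref{gs.times.e}, the $\vec{e}$ part gives $-w^k\vec{h}$.
\item By \eqref{curl.z}, \eqref{curl.wz.k} and \eqref{gs.times.z}, the $\vec{z}$ part gives $2kw^{k-1}\vec{h}-w^k\vec{h}$.
\item Everything is now a multiple of $\vec{h}$. Collecting powers of $w$ should produce exactly $p_{1,m+1,k}$. The identities to use are \eqref{lemma.p1np1k} and \eqref{lemma.p2nk}, with the factors $2^{-(m-k)}$ reconciled as in \eqref{lemma.gsrc.sum.1}. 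That lemma is evidently designed for exactly this regrouping of $p_{2,n,k-1}+4p_{1,n,k}$.
\end{itemize}

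\textbf{Main obstacle.} The hardest step is the coefficient bookkeeping in the two inductive steps. The index shifts in $k$ and the powers of $2$ must line up so that the double-binomial identities of the preceding lemmas apply verbatim. Handling the $\vec{c}$ terms in the even step is the most delicate part. Before doing the general bookkeeping, I would check the $n=0$ and $n=1$ cases explicitly to pin down the signs.
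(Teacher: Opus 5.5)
The gap is in your step from order $2n+1$ to order $2n+2$. You take \eqref{gs.times.h} and \eqref{curl.wh.k} at face value, with $\vec{c}$ on the right-hand side, and then have no way to finish. The $\vec{c}$ terms do not cancel: their net coefficient of $w^j\vec{c}$ is $g(-1)^j p_{2,m,j}/2^{m-j}\neq 0$. Nor can they be re-expressed through $\vec{e}$, $\vec{z}$ and $w$, because $\vec{c}=(x,0,-z)^T$ is not of the form $\alpha\vec{e}+\beta\vec{z}$ for any scalars. The $y$-component forces $\alpha g yz=0$, hence $\alpha=0$ where $yz\neq 0$, and then the $x$-component would force $x=0$. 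So the relation you hope to read off from the case $n=0$ and ``carry through the general index $k$'' does not exist, and this half of the induction is not established by your plan.

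The missing observation is that those two identities are misprinted: $\vec{c}$ should be $\vec{e}$ in both. Computing directly, $g\vec{s}\times\vec{h}=g^2\,\vec{s}\times\vec{f}=g^2\left(z\vec{s}-r^2\vec{z}\right)=g\vec{e}-gw\vec{z}$. Since $\nabla w^k=2gk\,w^{k-1}\vec{s}$ and $\nabla\times\vec{h}=-2g\vec{z}$,
\[
\nabla\times\left(w^k\vec{h}\right)=2k\,w^{k-1}\,g\vec{s}\times\vec{h}+w^k\,\nabla\times\vec{h}=-2g(k+1)w^k\vec{z}+2gk\,w^{k-1}\vec{e}.
\]
This corrected form is what the paper's proof actually uses.

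With this correction the step closes by exactly the bookkeeping you set up. Put $c_k=(-1)^{k+1}p_{1,m,k}/2^{m-k}$.
\begin{itemize}
\item The coefficient of $g\,w^j\vec{e}$ is $2(j+1)c_{j+1}-c_j=\frac{(-1)^j}{2^{m-j}}\bigl(p_{1,m,j}+4(j+1)p_{1,m,j+1}\bigr)=\frac{(-1)^j}{2^{m-j}}p_{2,m,j}$, by \eqref{lemma.p2nk}.
\item The coefficient of $g\,w^j\vec{z}$, for $0\le j\le m+1$, is $c_{j-1}-2(j+1)c_j=\frac{(-1)^j}{2^{m+1-j}}\bigl(p_{1,m,j-1}+4(j+1)p_{1,m,j}\bigr)=\frac{(-1)^j}{2^{m+1-j}}\bigl(p_{2,m,j-1}+4p_{1,m,j}\bigr)$, by \eqref{lemma.p2nk} with $k$ replaced by $j-1$.
\item Then \eqref{lemma.gsrc.sum.1} with $s=w$ turns this $\vec{z}$ part into the form in \eqref{curl.exp.a.z.2n2}.
\end{itemize}
In other words, the ``$\vec{c}$'' terms you were trying to get rid of are precisely the $\vec{e}$ terms of the target formula.

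Your step from order $2n+2$ to order $2n+3$ is sound and follows the paper. The collected coefficient of $w^k\vec{h}$ is $\frac{(-1)^{k+1}}{2^{m+1-k}}\bigl(2(2k+5)p_{2,m,k}+p_{2,m,k-1}\bigr)=\frac{(-1)^{k+1}}{2^{m+1-k}}p_{1,m+1,k}$, by \eqref{lemma.p2nk} and \eqref{lemma.p1np1k}.
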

\begin{proof}
We will induct on n.

\textbf{Base case ($n=0,n=1$)}: It is easy to verify that for $n=0$ and $n=1$ (\ref{curl.exp.a.z.2n1}) and (\ref{curl.exp.a.z.2n2}) hold.

\textbf{Inductive Hypothesis ($n=m$)}: Assume for some $m \in \mathbb{Z}_{> 0}$ (\ref{curl.exp.a.z.2n1}) holds.

\textbf{Inductive Step}: Let’s first prove that (\ref{curl.exp.a.z.2n2}) holds for $n=m $. 
Applying the curl operation on both sides of (\ref{curl.exp.a.z.2n1}) with $n=m$ and using (\ref{curl.exp.a.F}), we have
\begin{equation*}
	\begin{split}
	\curl{2m+2} B &= g^m\exp(-ar^2) \sum_{k=0}^{m}\frac{(-1)^{k+1}}{2^{m-k}}p_{1,m,k} \nabla \times w^k \vec{h} 
	\\
	&- g^m\exp(-ar^2)\sum_{k=0}^{m} \frac{(-1)^{k+1}}{2^{m-k}} p_{1,m,k}w^k g\vec{s}\times \vec{h} 
	\end{split}
\end{equation*}
Applying (\ref{curl.h}) , (\ref{curl.wh.k}) and (\ref{gs.times.h}), and moving the factor $g^m\exp(-ar^2) $ to the left hand side for cleanness, the above equation can be written as
\begin{equation*}
	\begin{split}
		&g^{-m}\exp(ar^2) \curl{2m+2} B =\\
		&-\frac{1}{2^m}p_{1,m,0}(-2g\vec{z}) \\
		&+ \sum_{k=1}^{m} \frac{(-1)^{k+1}}{2^{m-k}}p_{1,m,k}\left(-2g(k+1)w^k\vec{z}+2gkw^{k-1}\vec{e} \right) \\
		&- \sum_{k=0}^{m} \frac{(-1)^{k+1}}{2^{m-k}}p_{1,m,k} w^k (-gw\vec{z} + g\vec{e})
	\end{split}
\end{equation*}
The above can be rearranged into
\begin{equation}
	\label{proof.curl.exp.a.z}
	\begin{split}
		&g^{-(m+1)}\exp(ar^2) \curl{2m+2} B =\\
		&\frac{2}{2^m}p_{1,m,0}\vec{z} + (-1)^{m+1}p_{1,m,m}w^{m+1}\vec{z} \\
		&+ \sum_{k=1}^{m} \frac{(-1)^{k}}{2^{m+1-k}} \left(p_{1,m,k-1}+4(k+1)p_{1,m,k}  \right)w^k\vec{z} \\
		&+ \sum_{k=0}^{m-1} \frac{(-1)^k}{2^{m-k}} \left(p_{1,m,k} +4(k+1)p_{1,m,k+1}  \right) w^k\vec{e} +(-1)^mp_{1,m,m}w^m\vec{e}
	\end{split}
\end{equation}
(\ref{lemma.p2nk}) and (\ref{ph.n.n}) give us the following identities.
\begin{equation*}
	\begin{split}
		p_{1,m,k} + 4(k+1) p_{1,m,k+1} &=p_{2,m,k} \\
		p_{1,m,k-1} + 4(k+1) p_{1,m,k} &=p_{2,m,k-1} + 4p_{1,m,k} \\
		p_{1,m,m} &= p_{2,m,m}
	\end{split}
\end{equation*}
Applying the above identities to (\ref{proof.curl.exp.a.z}), we have
\begin{equation*}
	\begin{split}
		&g^{-(m+1)}\exp(ar^2) \curl{2m+2} B \\
		&=\frac{2}{2^m}p_{1,m,0}\vec{z} + (-1)^{m+1}p_{1,m,m}w^{m+1}\vec{z} \\
		&+ \sum_{k=1}^{m} \frac{(-1)^k}{2^{m+1-k}}\left( p_{2,m,k-1} + 4p_{1,m,k}\right)w^k\vec{z} \\
		&+ \sum_{k=0}^{m-1} \frac{(-1)^k}{2^{m-k}}p_{2,m,k}w^k\vec{e} + (-1)^mp_{2,m,m}w^m\vec{e}
	\end{split}
\end{equation*}
The above equation can be rearranged into
\begin{equation*}
	\begin{split}
		&g^{-(m+1)}\exp(ar^2) \curl{2m+2} B \\
		&= \sum_{k=0}^{m+1} \frac{(-1)^k}{2^{m+1-k}}\left( p_{2,m,k-1} + 4p_{1,m,k}\right)w^k\vec{z} \\
		&+ \sum_{k=0}^{m} \frac{(-1)^k}{2^{m-k}}p_{2,m,k}w^k\vec{e} 
	\end{split}
\end{equation*}
By (\ref{lemma.gsrc.sum.1}), we have
\begin{equation*}
	\begin{split}
		g^{-(m+1)}\exp(ar^2) \curl{2m+2} B &= \sum_{k=0}^{m}\frac{(-1)^k}{2^{m-k}}p_{2,m,k} w^k \vec{e}\\
		&+ \sum_{k=0}^{m} \frac{(-1)^k}{2^{m-k}}\left(2p_{1,m,k} - wp_{2,m,k} \right)w^k\vec{z}
	\end{split}
\end{equation*} 
The above equation shows that (\ref{curl.exp.a.z.2n2}) holds for $n=m $.

Applying the curl operation on both side of (\ref{curl.exp.a.z.2n2}) with $n=m$ and using (\ref{curl.exp.a.F}), we have
\begin{equation*}
	\begin{split}
		\curl{2(m+1)+1} B &= g^{m+1}\exp(-ar^2)(\nabla-g\vec{s}) \times \\
		&\left( \sum_{k=0}^{m}\frac{(-1)^k}{2^{m-k}}(2p_{1,m,k}-wp_{2,m,k})w^k\vec{z} + \sum_{k=0}^{m}\frac{(-1)^k}{2^{m-k}}p_{2,m,k} w^k\vec{e} \right)
	\end{split}
\end{equation*}
Moving the factor $g^{m+1}\exp(-ar^2) $ to the left hand side for cleanness, the above equation can be written as
\begin{equation*}
	\begin{split}
		&g^{-(m+1)}\exp(ar^2) \curl{2(m+1)+1} B \\
		&= \sum_{k=0}^{m}\frac{(-1)^k}{2^{m-k}} (3p_{1,m,k}\nabla \times w^k\vec{z}-p_{2,m,k}\nabla \times w^{k+1}\vec{z}) \\
		&+ \sum_{k=0}^{m} \frac{(-1)^k}{2^{m-k}}p_{2,m,k}\nabla \times w^k \vec{e} \\
		&- \sum_{k=0}^{m} \frac{(-1)^k}{2^{m-k}}(2p_{1,m,k}-wp_{2,m,k})w^k g\vec{s} \times \vec{z} \\
		&- \sum_{k=0}^{m} \frac{(-1)^k}{2^{m-k}}p_{2,m,k}w^k g\vec{s} \times \vec{e}
	\end{split}
\end{equation*}  
Applying (\ref{curl.z}), (\ref{curl.wz.k}), (\ref{curl.we.k}), (\ref{gs.times.z}) and (\ref{gs.times.e}), we have
\begin{equation*}
	\begin{split}
		&g^{-(m+1)}\exp(ar^2) \curl{2(m+1)+1} B \\
		&= \sum_{k=1}^{m} \frac{k=0}{2^{m-k}} 4kp_{1,m,k}w^{k-1} \vec{h} \\
		&- \sum_{k=0}^{m}\frac{(-1)^k}{2^{m-k}}p_{2,m,k}2(k+1)w^k\vec{h} \\
		&- \sum_{k=0}^{m}\frac{(-1)^k}{2^{m-k}}p_{2,m,k} w^k\vec{h} \\
		&- \sum_{k=0}^{m} \frac{(-1)^k}{2^{m-k}}(2p_{1,m,k}-wp_{2,m,k}) w^k\vec{h}
	\end{split}
\end{equation*}  
By (\ref{lemma.gsrc.sum.1}), the above can be written as
\begin{equation*}
	\begin{split}
		&g^{-(m+1)}\exp(ar^2) \curl{2(m+1)+1} B \\
		&= \sum_{k=1}^{m} \frac{(-1)^k}{2^{m-k}} 4kp_{1,m,k}w^{k-1} \vec{h} \\
		&- \sum_{k=0}^{m}\frac{(-1)^k}{2^{m-k}}p_{2,m,k}2(k+1)w^k\vec{h} \\
		&- \sum_{k=0}^{m}\frac{(-1)^k}{2^{m-k}}p_{2,m,k} w^k\vec{h} \\
		&- \sum_{k=0}^{m+1} \frac{(-1)^k}{2^{m+1-k}}(p_{2,m,k-1}+4p_{1,m,k}) w^k\vec{h}
	\end{split}
\end{equation*}  
Rearrange the summations into
\begin{equation}
	\label{thm.curl.exp.a.z}
	g^{-(m+1)}\exp(ar^2) \curl{2(m+1)+1} B = \sum_{k=0}^{m} \frac{(-1)^{k+1}}{2^{m+1-k}}v_{m,k}w^k\vec{h}+(-1)^mw^{m+1}\vec{h}
\end{equation}
where
\begin{equation*}
	v_{m,k}=p_{2,m,k}2(2k+3)+p_{2,m,k-1}+4(4(k+1)p_{1,m,k+1}+p_{1,m,k})
\end{equation*}
By (\ref{lemma.p2nk}), we have
\begin{equation*}
	v_{m,k} = 2(2k+5)p_{2,m,k} + p_{2,m.k-1}
\end{equation*}
By (\ref{lemma.p1np1k}),
\begin{equation*}
	v_{m,k} = p_{1,m+1.k}
\end{equation*}
Thus, (\ref{thm.curl.exp.a.z}) becomes
\begin{equation*}
	g^{-(m+1)}\exp(ar^2) \curl{2(m+1)+1} B = \sum_{k=0}^{m} \frac{(-1)^{k+1}}{2^{m+1-k}}p_{1,m+1,k}w^k\vec{h}+(-1)^mw^{m+1}\vec{h}
\end{equation*}
The above can be written as
\begin{equation*}
	g^{-(m+1)}\exp(ar^2) \curl{2(m+1)+1} B = \sum_{k=0}^{m+1} \frac{(-1)^{k+1}}{2^{m+1-k}}p_{1,m+1,k}w^k\vec{h}
\end{equation*}
The above shows that (\ref{curl.exp.a.z.2n1}) holds for $n=m+1$.

$\therefore$ By the principle of induction, (\ref{curl.exp.a.z.2n1}) and (\ref{curl.exp.a.z.2n2}) hold for all $n\in \mathbb{Z}_{\ge 0}$.
\end{proof}

\section*{Acknowledgment}
I thank Dragan Redžić and Wim Vegt for their valuable feedback and constructive comments on an earlier draft of this manuscript. I also thank the anonymous reviewers for their professional insights and constructive suggestions, which substantially improved the quality and presentation of this work.

Declaration of generative AI and AI-assisted technologies in the manuscript preparation process:

During the preparation of this manuscript, the author used ChatGPT to assist with language refinement, organization, critical discussion of mathematical exposition, and preparation of the revision. After using this tool, the author carefully reviewed and edited all content and takes full responsibility for the published manuscript.

\bibliographystyle{elsarticle-num}
\bibliography{solution3D_v2}

\end{document}